\newcommand{\thh}[1]{#1^{\mathrm{th}}} 
\newcommand{\st}[1]{#1^{\mathrm{st}}} 
\newcommand{\nd}[1]{#1^{\mathrm{nd}}} 
\newcommand{\R}{\mathbb{R}}
\newcommand{\Z}{\mathbb{Z}}
\newcommand{\N}{\mathbb{N}}
\newcommand{\wt}[1]{\widetilde{#1}} 
\newcommand{\wh}[1]{\widehat{#1}} 
\newcommand{\und}{\underline}
\newcommand{\lra}{\longrightarrow} 
\newcommand{\ra}{\rightarrow}
\newcommand{\eps}{\varepsilon}
\newcommand{\del}{\delta}
\newcommand{\ad}{\operatorname{ad}}
\newcommand{\pt}{\operatorname{pt}}
\newcommand{\Graph}{\operatorname{graph}}
\newcommand{\pa}{\partial}
\newcommand{\Sec}{\operatorname{Sec}}
\newcommand{\id}{\operatorname{id}}
\newcommand{\PP}{\operatorname{P}} 
\newcommand{\dd}{\operatorname{d}}
\newcommand{\T}{\mathrm{T}} 
\newcommand{\TT}[1]{\mathrm{T}^{#1}} 
\newcommand{\Thol}[1]{\wt\T^{#1,#1}} 
\newcommand{\Tt}[1]{\mathrm{T}^{(#1)}} 
\newcommand{\tgT}{\T} 
\newcommand{\jet}[1]{\bm{\mathrm{t}}^{#1}} 
\newcommand{\E}[1]{E^{#1}} 
\newcommand{\A}{\mathcal{A}} 
\newcommand{\Ttsemihol}[1]{\wt \T^{(#1)} }
\newcommand{\Tsemihol}[1]{\wt \T^{#1} }
\newcommand{\G}{\mathcal{G}} 
\newcommand{\g}{\mathfrak{g}} 
\newcommand{\RR}{\mathcal{R}} 
\newcommand{\Adm}{\mathcal{ADM}} 
\newcommand{\Weil}{\mathbb{D}\, } 
\newcommand{\momenta}{\upsilon} 
\newcommand{\tauM}[2]{\tau^{#1}_{#2,M}} 
\newcommand{\tauE}[2]{\tau^{#1}_{#2}} 
\newcommand{\M}{\mathrm{M}} 
\newcommand{\F}{\mathrm{F}} 
\def\relto{\rightarrow\!\!\vartriangleright} 
\def\<#1>{\left\langle #1\right\rangle}
\def\(#1){\left( #1\right)}
\numberwithin{equation}{section} 
\theoremstyle{plain} 
\newtheorem{thm}{Theorem}[section]
\newtheorem{prop}[thm]{Proposition}
\newtheorem{lem}[thm]{Lemma}
\theoremstyle{definition}
\newtheorem{df}[thm]{Definition}
\newtheorem{ex}[thm]{Example}
\newtheorem{problem}[thm]{Problem}
\theoremstyle{remark}
\newtheorem{rem}[thm]{Remark}
\newtheorem{cor}[thm]{Corollary}
\begin{document}

\title{Prototypes of higher algebroids with applications to variational calculus\footnote{This research was supported by the  Polish National Science Center grant under the contract number DEC-2012/06/A/ST1/00256.}}
\author{Micha\l\ J\'{o}\'{z}wikowski\footnote{\emph{Institute of Mathematics. Polish Academy of Sciences} (email: \texttt{mjozwikowski@gmail.com})},
Miko\l aj Rotkiewicz\footnote{\emph{Institute of Mathematics. Polish Academy of Sciences} and \emph{Faculty of Mathematics. Informatics and Mechanics. University of Warsaw} (email: \texttt{mrotkiew@mimuw.edu.pl})}}

\maketitle
\begin{abstract}
Reductions of higher tangent bundles of Lie groupoids provide natural examples of geometric structures which we would like to call higher algebroids. Such objects can be also constructed abstractly starting from an arbitrary almost Lie algebroid. A higher algebroid is, in principle, a graded bundle equipped with a differential relation of special kind (a Zakrzewski morphism). In the paper we investigate basic properties of higher algebroids and show some applications. Namely, we develop a geometric framework for variational calculus on higher algebroids (including both forces and momenta). Such a formalism covers simultaneously variational problems on higher tangent bundles, first-order problems on algebroids and higher-order problems reduced by symmetries.
\end{abstract}

\paragraph*{Keywords:} higher algebroids, almost Lie algebroids, variational calculus, higher-order Euler-Lagrange equations, graded bundles

\paragraph*{MSC:} 58A20, 58A50, 70G65, 58E30, 70H50


\tableofcontents

\section{Introduction}\label{sec:intro}

\paragraph{Motivations.} In this paper we undertake preliminary studies on the concept of a \emph{higher algebroid}. Some research in this direction was already started by  Voronov \cite{Voronov_Q_mfds_high_lie_alg_2010} in the frames of Graded Geometry. In his approach a higher algebroid is an $N$-manifold equipped with a homological vector field of weight $1$, per analogy to the characterization of a Lie algebroid as a homological vector field of weight $1$ on an $N$-manifold associated with a vector bundle.

In contrast to Voronov's approach, our work is rooted in the description of a Lie algebroid as a reduction of the tangent bundle $\T\G$ of a Lie groupoid $\G$. Taking this as the starting point it is a natural idea to consider reductions of higher tangent bundles $\TT k\G$ of $\G$ as the most evident examples of what we would like to call \emph{higher algebroids}. Such a point of view is natural in Geometric Mechanics -- higher-order systems with internal symmetries can be reduced to systems defined on such higher algebroids. The bundle $\TT k\G$ reduces to a set $\A^k(\G)$ consisting of all  $k$-velocities $\jet k_0\gamma$ in the direction of the source fibration of the groupoid $\G$ and  based at $M$ -- the base of $\G$ (i.e., the source map of $\G$ is constant along the image of $\gamma$ and $\gamma(0)\in M$). It is clear that $\A^k(\G)$ has a graded bundle structure \cite{JG_MR_gr_bund_hgm_str_2011} over $M$ inherited from $\TT k\G$. The fundamental question arises.

{\bf Problem A:} What is the structure on $\A^k(\G)$ inherited from the groupoid multiplication?

Studies on the Problem A for $k=1$ and $\G$ being a Lie group can be seen as the foundations of the theory of Lie algebras. For a general  groupoid $\G$ and $k=1$ the solution is also well known: the space of sections of $\A^1(\G)$ is equipped with a canonical Lie bracket $[\cdot, \cdot]$ and, moreover, there exists a canonical bundle map $\rho: \A^1(\G)\ra \T M$ called the anchor map. Both  $[\cdot, \cdot]$ and $\rho$ satisfy natural axioms which give rise to the notion of a \emph{Lie algebroid}.

As far as we are concerned, even the case $k=2$ and $\G$ being a Lie group has never been studied.


\paragraph{Our results.}
Our main result is an answer to  Problem A, formulated in Theorem~\ref{thm:structure_Ak} and then abstracted in Theorem~\ref{thm:prop_Ek}. The key difficulty is that the concept of the Lie bracket on sections of $\tau^1:\A^1(\G)\ra M$ has no direct replacement in the space of sections of $\tau^k:\A^k(\G)\ra M$.
Note, for instance, that even in the simplest case of $k$-velocities on a manifold $M$ (i.e., sections of the $\thh{k}$ tangent bundle $\tau^k_M:\TT k M\ra M$) there is no natural bracket operation. The fundamental idea that allowed us to solve Problem A is a reformulation of the definition of an algebroid $(\tau: E\ra M, \rho, [\cdot, \cdot])$ in terms of a relation $\kappa$ canonically associated with an algebroid structure \cite{JG_PU_alg_gen_diff_calc_mfds_1999}. We present such a $\kappa$ in a form of a diagram
$$
\xymatrix{
\T E \ar[d]_{\T \tau} \ar@{-|>}[rr]^{\kappa} && \T E \ar[d]^{\tau_E} \\
\T M && E.\ar[ll]_{\rho}
}$$
A simple definition of $\kappa$ which, however, does not reveal its geometric importance, states that $\kappa$ is the relation dual to a vector bundle morphism $\varepsilon: \TT\ast E\ra \T E^\ast$ over $\rho: E\ra \T M$. The latter is defined as the composition $\varepsilon = \tilde{\Lambda}\circ\mathbf{R}$, where $\tilde{\Lambda}: \TT\ast E^\ast \ra \T E^\ast$ is associated with the canonical Poisson tensor \cite{JG_PU_alg_gen_diff_calc_mfds_1999} on $E^\ast$ and $\mathbf{R}:\TT\ast E\ra \TT\ast E^*$ is the canonical anti-symplectomorphism (see Subsection~\ref{sec:perliminaries}). The geometric meaning of $\kappa$  is concealed in the description of the homotopy relation of admissible curves on a Lie algebroid in the integration problem of Lie algebroids (see Section~\ref{ssec:integration}). For a tangent algebroid $E=\T M$, relation $\kappa$ is the canonical flip $\kappa_M:\T\T M\ra \T\T M$, while if $E=\g$ is a Lie algebra then $(x, X)\in \T_x\g\approx\g$ and $(y, Y)\in \T_y\g\approx\g$ are $\kappa$-related if and only if $X-Y=[x,y]$. Here we identify the tangent space of a vector space at a given point with the vector space itself.

It turned out that for $\A^k(\G)$ there exist an analogous relation $\kappa_k$,
$$
\xymatrix{
\TT k\A(\G)\ar@{-|>}[rr]^{\kappa_k}\ar[d]^{\TT k\tau}&&\T\A^k(\G)\ar[d]^{\tau_{\A^k(\G)}}\\
\TT k M&&\A^k(\G)\ar[ll]_{\rho_k},}
$$
We call the pair $(\A^k(\G), \kappa_k)$ a \emph{higher algebroid of a Lie groupoid} $\G$. The basic properties of $\kappa_k$ are studied in Theorem~\ref{thm:structure_Ak}.

In light of the famous result on the integrability of Lie brackets \cite{Crainic_Fernandes_int_lie_bra_2003}, it is clear that the higher algebroid structure $(\A^k(\G),\kappa_k)$ is fully determined by $(\A^1(\G),\kappa_1)$ -- the Lie algebroid of $\G$. Therefore in Section \ref{sec:jets_red} we mimic the construction of $(\A^k(\G),\kappa_k)$ starting from an arbitrary \emph{almost Lie} (i.e., the Jacobi identity is not assumed \cite{JG_MJ_pmp_2011,MJ_phd_2011}) \emph{algebroid} $(\tau: E\ra M,\kappa)$ instead of $(\A^1(\G),\kappa_1)$. As a result we obtain a tower of graded bundles $\tauE k{k-1}:\E k\ra\E {k-1}$ each equipped with a \emph{canonical relation} $\kappa_k:\TT kE\relto\T\E k$. We call the pair $(\E k,\kappa_k)$ a \emph{higher algebroid associated with} $(E,\kappa)$.
In Theorem \ref{thm:prop_Ek} we prove basic properties of such objects, in particular that relation $\kappa_k$ is of special kind, namely it is a \emph{Zakrzewski morphism} (\emph{ZM} in short). This fact can be equivalently stated as follows: $\varepsilon_k:\TT\ast\E k\ra\TT k\E\ast$ -- the dual of $\kappa_k$ -- is a true vector bundle morphism (not a relation!) over the \emph{anchor map} $\rho_k:\E k\ra\TT k M$. We also prove that relations $\kappa_k$ on different levels of the tower $\tauE k{k-1}$ are compatible. Similarly, relations $\kappa_k$ are compatible (via $\rho_k$-maps) with the canonical flips $\kappa_{k,M}:\TT k\T M \ra\T\TT k M$.

Theorem \ref{thm:prop_Ek} can be understood as an abstraction of Theorem \ref{thm:structure_Ak}. We make this point of view precise proving, in Theorem \ref{thm:A_E}, that for an integrable Lie algebroid $E=\A^1(\G)$, the abstract construction of the higher algebroid structure on $\E k$ agrees with the structure obtained from the reduction of $\TT k\G$. The case of an integrable Lie algebroid covers important special cases of: a higher tangent bundle $\TT k M$, a higher Atiyah algebroid $\TT kP/G$ of a principal $G$-bundle $G\ra P\ra M$, and a higher Lie algebra $\TT k_e G$
of a Lie group $G$. We discuss these in Section \ref{sec:examples}.

Let us explicitly  state some important aspects of our concept of a higher algebroid:
\begin{itemize}
\item The notion of a higher algebroid consists of two parts: a geometric object (a graded bundle $\E k$) and a geometric structure (the canonical relation $\kappa_k$). The object on its own is \textbf{not} a higher algebroid. Analogously, the tangent space at the unit of a Lie group is not a Lie algebra without the Lie bracket.
\item Our work should be treated rather as a study of a particular, yet interesting from the point of view of applications, and special, but fundamental, case of a \emph{general higher algebroid}, whose definition should be based on the properties of $\kappa_k$ enlisted in Theorem~\ref{thm:prop_Ek}. We postpone the detailed discussion of the latter to our next publication \cite{MJ_MR_abstract_h_alg}.
\item The language of differential relations and ZMs seems to be unavoidable. The reason of this is, basically, the fact that a reduction of a map is, in general, a relation not a map.
\item In our approach we do not need to use the Jacobi identity (but the compatibility of the anchor and the bracket is crucial). In other words, all  constructions work fine for almost Lie algebroids.
\end{itemize}


\paragraph{Applications.}
As a natural application of our results  we develop, in Section \ref{sec:gen_EL}, variational calculus on a higher algebroid $(\E k,\kappa_k)$. The actual definition of a  variational Problem \ref{prob:var_algebroid}  on $(\E k,\kappa_k)$ associated with a Lagrangian function $L:\E k\ra\R$ and a boundary set $S\subset \TT{k-1} E\times\TT{k-1} E$, uses extensively the structure of a higher algebroid on $\E k$: the anchor map $\rho_k$ enables us to construct the class of admissible paths $a^k:[t_0,t_1]\ra\E k$, whereas relation $\kappa_k$ is used to define the class of admissible variations (infinitesimal homotopies) along these paths. Reduced higher-order variational problems on Lie groupoids are the special case of this abstract approach, as observed in Remark \ref{rem:var_prob}.

The solutions of Problem \ref{prob:var_algebroid} are characterized, in Theorem \ref{thm:var_calc}, in terms of generalized \emph{forces} and \emph{momenta} along a given admissible trajectory $a^k(t)\in \E k$. The crucial idea in the proof is to give a geometric meaning of two basic steps performed while deriving the Euler-Lagrange equations for a higher-order variational problem. These are:
\begin{itemize}
\item reversing the order of differentiation in an admissible variation,
\item performing the $\thh{k}$-order integration by parts to extract the generator of an admissible variation.
\end{itemize}
Geometrically the first step requires dualizing relation $\kappa_k$, i.e., transforming the differential of $L$ by the dual map $\varepsilon_k:\TT\ast\E k\ra\TT k\E\ast$. The second step was studied in our previous publication \cite{MJ_MR_higher_var_calc_2013} (a short extract from this work is given in Appendix \ref{app:geom_lem}) -- it requires an application of certain vector bundle morphisms $\Upsilon_{k,\tau^\ast}$ and $\momenta_{k-1,\tau^\ast}$, for  $\tau^\ast:\E\ast\ra M$ being the dual bundle of $\tau:E\ra M$.

In this way we obtain the geometric formula $\Upsilon_{k,\tau^\ast}\left(\jet k\eps_{k}\left(\dd L(a^k(t))\right)\right)$ describing the force (an integral term) along the admissible trajectory $a^k(t)$. Thus the higher-algebroid \emph{Euler-Lagrange} (\emph{EL} in short) \emph{equations} along $a^k(t)$ read as
$$\Upsilon_{k,\tau^\ast}\left(\jet k_t\eps_{k}\left(\dd L(a^k(t))\right)\right)=0.$$
The geometric formula for the momentum (a boundary term)
is obtained in a similar way using the map $\momenta_{k-1,\tau^\ast}$. The precise formulation is provided by Theorem \ref{thm:var_calc}.

In our opinion, results obtained in Theorem \ref{thm:var_calc} are important for several reasons:
\begin{itemize}
\item They provide a natural extension of the higher-order variational calculus on manifolds to a quickly developing realm of algebroids and, at the same time, an extension of the first-order variational calculus on algebroids to higher orders.
\item They give a new insight into geometric structures which are important in variational calculus. Surprisingly, these turn out to be not maps, but differential relations of special kind (Zakrzewski morphisms).
\item They cover important special cases of a higher-order invariant system on a Lie group (and associated higher-order Euler-Poincar\'{e} equations) and a higher-order invariant system on a principal $G$-bundle (and associated higher-order Hamel equations and reduced Euler-Lagrange equations). We discuss these special cases in Section \ref{sec:examples}.
\item We can treat Theorem \ref{thm:var_calc} as a universal geometric scheme for higher-order variational calculus, covering both: unreduced systems and systems reduced by symmetries.
\end{itemize}


\paragraph{State of research.}
As was already mentioned the concept of a higher algebroid was first studied by Voronov \cite{Voronov_Q_mfds_high_lie_alg_2010}, but his ideas point in a different direction then ours. We postpone the comparison of his and our approaches to a separate publication \cite{MJ_MR_abstract_h_alg}.

Apart from this, the topic of higher algebroids is almost untouched in literature with just two exceptions known to us. In \cite{Gay_Holm_Inn_inv_ho_var_probl_2012}
variational problems on a higher tangent bundle of a Lie group are studied and the associated EL equations (\emph{higher-order Euler-Poincar\'{e} equations}) are derived. In \cite{Col_deDiego_seminar_2011} Colombo and de Diego introduced bundles $\E k$ (as objects) under the name \emph{higher-order algebroids} using a definition equivalent to ours. These authors, however, did not recognize the canonical geometric structure $\kappa_k$ on $\E k$, nor gave any examples.  Therefore, no idea how solve Problem A is present there.

Applications of our results to variational calculus can be situated at the crossing of two research paths in the field of Geometric Mechanics.

The first of them may be dated from the pioneer works of W. Tulczyjew \cite{Tulcz_dyn_ham_1976,Tulcz_dyn_lagr_1976} on the geometry of the Lagrangian and Hamiltonian formalism (this constructions are now known under the name \emph{Tulczyjew's triple}). After that, research efforts concentrated on the problem of developing higher-order analogs of Tulczyjew's formalism. First solutions contained various geometric constructions of the higher-order Euler-Lagrange equations. These were published by Tulczyjew himself \cite{Tulcz_geom_constr_lagr_der_1975, Tulcz_lagr_diff_1976, Tulcz_diff_lagr_1975} as well as other authors \cite{Crampin_EL_higher_order_1990, Crampin_Sar_Cart_high_ord_diff_eqns_1986,  Leon_Lacomb_lagr_sbmfd_ho_mech_sys_1989}.
More recent approaches \cite{KG_private, MJ_MR_higher_var_calc_2013, Tulcz_ehres_jet_theory_2006}  provide various geometric descriptions of the full structure of the variational calculus, i.e., the EL equations together with the corresponding momenta (boundary terms). The field is still being harvested -- works \cite{KG_private, MJ_MR_higher_var_calc_2013, PrMart_RomRoy_lagr_ham_aut_ho_ds_2011} date from the last 4 years.
Note that these higher-order extensions deal mostly with the Lagrangian formalism.  Although some attempts have been made \cite{KG_private, Gracia_Martin_Munos_2003,  Leon_Lacomb_lagr_sbmfd_ho_mech_sys_1989}, so far the fully satisfactory notion of the higher-order Hamiltonian formalism is not know.

The second research line in the field was postulated by A. Weinstein \cite{Weinstein_lagr_mech_group_1996} who suggested that a general geometric framework for Analytical Mechanics should be based on the structure of a Lie algebroid. Starting from the seminal papers of Mart\'{i}nez \cite{Martinez_lagr_mech_lia_alg_2001,Martinez_geom_form_mech_lie_alg_2001}, this task was undertaken by many mathematicians and many solutions were proposed, among which \cite{KG_JG_var_calc_alg_2008, KG_JG_PU_geom_mech_alg_2006} seem to be the most elegant, general (classes of objects extending Lie algebroids are considered), simple, and closest to the original ideas of Tulczyjew. The resulting generalization of the Lagrangian (and Hamiltonian) formalism is important as Lie algebroids appear naturally in mechanical systems and variational problems invariant with respect to some inner symmetries. Papers \cite{KG_JG_var_calc_alg_2008, KG_JG_PU_geom_mech_alg_2006} quoted above contain a brief discussion of alternative approaches with an extensive list of references (also survey papers \cite{Cor_Leon_Inn_survey_lagr_mech_ctr_lie_alg_2006,Martinez_lie_alg_clas_mech_opt_ctr_2007} may be helpful).

To sum up, from the last 40 years the studies of the Lagrangian formalism in the frames of Differential Geometry and Geometric Mechanics went, in principle, in two directions. The goal was either to increase the order of the formalism, or to extend the framework to classes of objects more general than tangent bundles. These efforts can be schematically presented in the following picture:
$$\xymatrix{*+[F]{\txt{variational calculus\\ on $\T M$}}\ar[d]\ar[rr]&&*+[F]{\txt{variational calculus\\ on $\TT k M$}}\ar@{-->}[d]\\
*+[F]{\txt{variational calculus\\ on algebroids}}\ar@{-->}[rr]&&*+[F--]{\quad\txt{variational calculus\\ on higher algebroids}\quad}.}$$
Our research situates itself in the dashed box on this diagram -- we generalize, simultaneously, the higher-order variational calculus on manifolds and the first-order variational calculus on algebroids.

\paragraph{Outline of the paper.}
We begin preliminary Section \ref{sec:perliminaries} by revising basic information on higher tangent bundles, graded bundles, vector bundles and their lifts as well as the associated canonical pairings. We also recall the construction of the canonical flip $\kappa_k:\TT k\T M\ra\T\TT kM$ and its dual $\eps_k: \TT\ast\TT kM\ra\TT k\TT\ast M$. Later, in Subsection \ref{ssec:algebroids}, we introduce almost Lie algebroids. A special emphasis is put on the defining relation $\kappa$ and its basic properties.

In Section \ref{sec:red_lie} we study the reduction of the higher tangent bundle $\TT k\G$ of a Lie groupoid $\G$ an perform the construction of the associated higher algebroid $(\A^k(\G),\kappa_k)$ of $\G$. The most important result in this part is Theorem \ref{thm:structure_Ak} investigating the canonical relation $\kappa_k$.

Section \ref{sec:jets_red} is devoted to the construction of abstract higher algebroids $(\E k,\kappa_k)$ associated with an almost Lie algebroid $(E,\kappa)$. We introduce such object in an inductive Definition \ref{def:Ek} and later, in Theorem \ref{thm:prop_Ek}, study their properties. In Subsection \ref{ssec:integration} we prove that the construction of $(\E k,\kappa_k)$ agrees with $(\A^k(\G),\kappa_k)$ when $(E,\kappa)$ is the Lie algebroid of $\G$ (Theorem \ref{thm:A_E}). We also introduce the notion of admissible paths and admissible homotopies on a higher algebroid $(\E k,\kappa_k)$.

In Section \ref{sec:gen_EL} we discuss applications of higher algebroids in variational calculus. We begin by showing, in Theorem \ref{thm:red_var_prob}, that a $\thh{k}$-order invariant variational  problem on a Lie groupoid $\G$ (Problem \ref{prob:var_groupoid}) is equivalent to a certain variational problem on the corresponding $\thh{k}$-order Lie algebroid (Problem \ref{prob:red_algebroid}). In the last paragraph of Subsection \ref{ssec:red_var_prob} we study a concrete example of the $\nd{2}$-oder $G$-invariant variational problem on a principal $G$-bundle $p:P\ra M$. In Subsection \ref{ssec:var_calc_alg} we consider an abstract variational problem on a higher algebroid $(\E k,\kappa_k)$ (Problem \ref{prob:var_algebroid}). In Theorem \ref{thm:var_calc} we introduce generalized forces and momenta and characterize solutions of Problem \ref{prob:var_algebroid} in terms of them.

In Section \ref{sec:examples} we provide various examples. In Subsection \ref{ssec:examples_algebroids} we study concrete higher algebroids including higher tangent bundles, higher Lie algebras, higher Atiyah algebroids and higher action algebroids. In Subsection \ref{ssec:examples_var_calc} we derive the EL equations and generalized momenta for variational problems defined on these algebroids.

The appendixes contain important results from the literature. In Appendix \ref{app:geom_lem}  we briefly sketch our own results \cite{MJ_MR_higher_var_calc_2013} on the geometry of the integration-by-parts procedure. Finally, in Appendix \ref{app:ZM} we discuss the notion of a Zakrzewski morphism between vector bundles which is a fundamental concept in this work.
\medskip

Let us end the introductory section with a motivating example.
\begin{ex}[$\thh{k}$-order Atiyah algebroid]\label{ex:main}
Consider a right principal $G$-bundle $p:P\ra M=P/G$. The associated right $G$-action $r_{P,0}:P\times G\ra P$ induces the canonical right $G$-action $r_{P,k}:\TT kP\times G\ra\TT kP$, defined as $(r_{P,k})_g=\TT k(r_{P,0})_g$ for each $g\in G$. This action is still free and proper and thus $\TT kP/G$ is a smooth manifold (in fact it is a graded bundle over $M$ as we shall see in Section \ref{sec:jets_red}).

Bundle $\TT kP/G$ can be equipped with the natural structure of a higher algebroid (called the \emph{$\thh{k}$-order Atiyah algebroid}) $\kappa_k:\TT k\left(\T P/G\right)\relto \T\left(\TT kP/G\right)$. Relation $\kappa_k$ can be defined as a reduction of the canonical flip $\kappa_{k,P}$:
$$\xymatrix{
&\TT k\T P \ar[rr]^{\kappa_{k,P}} \ar@{->>}[d]&& \T\TT k P\ar@{->>}[d]\\
\TT k\left(\T P/G\right)\ar@{=}[r]&\TT k\T P/\TT kG \ar@{--|>}[rr]^{\kappa_k} && \T\TT k P/\T G\ar@{=}[r] &\T\left(\TT kP/G\right),
}$$
that is, two elements in the bottom row are $\kappa_{k}$-related if and only if they are images of two $\kappa_{k,P}$-related elements in the top row.
Here the left and right down-pointing arrows are projections by $\TT kr_{P,1}$ and $\T r_{P,k}$-action, respectively. Later in Proposition \ref{prop:atiyah_alg} we shall show that $(\TT kP/G,\kappa_k)$ is a $\thh{k}$-order algebroid associated with the canonical groupoid structure on $\Gamma_P=(P\times P)/G$ (the so-called \emph{Atiyah groupoid}).

The reduced bundle $\TT kP/G$ appears naturally in variational calculus. Consider, namely, a variational problem on $P$ associated with a $\thh{k}$-order smooth Lagrangian $\wt L:\TT kP\ra\R$. Let us assume that $\wt L$ is invariant with respect to the action $r_{P,k}$. Therefore it factorizes through some smooth function $L:\TT kP/G \ra\R$:
$$\xymatrix{\TT k P\ar[rr]^{\wt L}\ar@{->>}[d] && \R\\
\TT kP/G \ar@{-->}[rru]^{\exists !L}.
}$$

Not so obviously, also the canonical relation $\kappa_k$ is important in variational calculus. To see this, recall that to derive the $\thh{k}$-order EL equations for $\wt L$ one considers usually an $s$-parameter family (homotopy) of paths $\gamma(t,s)\in P$. When calculating the variation of the action induced by this homotopy, vector $\jet 1_s\jet k_t\gamma(t,s)\in\T\TT kP$ naturally appears. It is well-known that to derive the final form of the equations one has to transform this vector to an element $\jet k_t\jet 1_s\gamma(t,s)\in\TT k\T P$, i.e., to use the canonical flip $\kappa_{k,P}:\TT k\T P\ra\T\TT kP$.

To this end, since $\wt L$ is $G$-invariant, it is clear that the $\thh{k}$-order EL equations for $\wt L$ should reduce to some $\thh{k}$-order equations (reduced EL equations) for $L$. The standard way of obtaining these equations leads though to a rather complicated manipulations on the standard EL equations for $\wt L$. We shall sketch this derivation (for a simple case $k=2$) in the last paragraph of Subsection \ref{ssec:red_var_prob}. In this paper we will show also how to obtain the reduced EL equations directly from a variational principle on $\TT kP/G$ (see Subsection \ref{ssec:var_calc_alg} and the $\thh{4}$ paragraph in Subsection \ref{ssec:examples_var_calc}). Relation $\kappa_k$, being a reduction of $\kappa_{k,P}$, is crucial in defining the variations along the trajectories in the reduced space $\TT kP/G$ (cf. Subsection \ref{ssec:red_var_prob}).
\end{ex}


\newpage
\section{Preliminaries}\label{sec:perliminaries}


\subsection{Notation, basic constructions}\label{ssec:notation}

\paragraph{Higher tangent bundles.}

Throughout the paper we will work with \emph{higher tangent bundles}. We will use standard notation  $\TT k M$ for the \emph{$\thh{k}$ tangent bundle} of a manifold $M$. Points in the total space of this bundle will be called \emph{$k$-velocities}. An element represented by a curve $\gamma:[t_0,t_1]\ra M$ at $t$ will be denoted by $\jet k\gamma(t)$ or $\jet k_t\gamma(t)$.

The $\st{k+1}$ tangent bundle is canonically included in the tangent space of the $\thh{k}$ tangent bundle (see, e.g., \cite{Tulcz_lagr_diff_1976}):
$$\iota^{1,k}:\TT{k+1}M\subset\T\TT k M, \quad \jet {k+1}_{t=0}\gamma(t)\longmapsto \jet 1_{t=0}\jet k_{s=0} \gamma(t+s).$$
The composition of this injection with the canonical projection $\tau_{\TT k M}:\T\TT kM\ra\TT kM$ defines the structure of the
\emph{tower of higher tangent bundles}
$$\TT k M\lra\TT {k-1} M\lra \TT {k-2} M\lra\hdots\lra\T M\lra M.$$
 The canonical projections from higher- to lower-order tangent bundles will be denoted by $\tauM ks:\TT k M\ra\TT {s} M$ (for $k\geq s$). Instead of $\tauM k0:\TT kM\ra M$ we will write simply $\tau_M^k$ and instead of $\tauM 10=\tau^1_M:\T M\ra M$ we will use the standard symbol $\tau_M$. The cotangent fibration will be denoted with the standard symbol $\tau_M^\ast:\TT\ast M\ra M$.

Another important constructions are \emph{iterated tangent bundles} $\Tt k M:=\T\hdots\T M$ and \emph{iterated higher tangent bundles} $\TT {n_1}\hdots \TT {n_r} M$ which will be also denoted by $\TT {n_1, \ldots, n_r} M$. Elements of the latter will be called $(n_1,\hdots,n_r)$-velocities.  
These bundles admit natural projections to lower-order jet bundles which will be denoted by $\tauM{(n_1,\ldots, n_r)}{(n_1',\ldots, n_r')}:\TT {n_1\hdots n_r} M \ra \TT {n_1'\hdots n_r'} M$ (for $n_j\geq n_j'$, where $1\leq j\leq r$). (Iterated higher) tangent bundles are subject to a number of natural inclusions such as already mentioned $\iota^{1,k}_M:\TT {k+1} M\subset \T\TT kM$, $\iota^{l,k}_M:\TT {k+l}M\subset\TT l\TT k M$, $\iota^k_M:\TT k M\subset \Tt k M$, etc. We will use these extensively.

Given a smooth function $f$ on a manifold $M$ one can construct functions $f^{(\alpha)}$ on $\TT k M$, the so called $(\alpha)$-lifts of $f$ (see~\cite{Morimoto_Lifts}), where $0\leq \alpha\leq k$. These are defined by
$$
f^{(\alpha)}(\jet k_0\gamma(t)):= \left.\frac{d^\alpha}{dt^\alpha}\right|_{t=0} f(\gamma(t)).
$$
By iterating this construction we obtain functions $f^{(\alpha, \beta)} := (f^{(\beta)})^{(\alpha)}$ on $\TT k \TT l M$ for $0\leq \alpha\leq k$, $0\leq \beta\leq l$, and, generally, functions $f^{(\epsilon_1, \ldots, \epsilon_r)}$ on $\TT {n_1, \ldots, n_r} M$ for $0\leq \epsilon_j\leq n_j$, $1\leq j\leq r$.
A coordinate system $(x^a)$ on $M$ gives rise to the so-called \emph{adapted coordinate systems} $(x^{a, (\alpha)})_{0\leq \alpha \leq k}$ on $\TT k M$  and $(x^{a, (\epsilon)})_\epsilon$ on $\TT {n_1} \ldots \TT {n_r} M$, where the multi-index $\epsilon=(\epsilon_1, \ldots \epsilon_r)$ is as above, and $x^{a, (\alpha)}$, $x^{a, (\epsilon)}$ are obtained from $x^a$ by the above lifting procedure.
Let us remark that in the definition of $f^{(\alpha)}$ we follow the convention of \cite{Gay_Holm_Inn_inv_ho_var_probl_2012, Tulcz_lagr_diff_1976}. The original convention of \cite{Morimoto_Lifts} is slightly different, namely it contains a normalizing factor $\frac 1{\alpha!}$ in front of the derivative.


\paragraph{Graded bundles and homogeneity structures.}

The bundle $\TT k M$ is the fundamental example of a \emph{graded bundle} \cite{JG_MR_gr_bund_hgm_str_2011}, a generalization of the notion of a vector bundle. A graded bundle posses
 an atlas in which one can assign \emph{weights} (non-negative integers) to the local coordinates, in such a way that coordinate transformations preserve the gradation defined by this assignment. For example for $\TT k M$, one assigns weight $\alpha$ to coordinates $x^{a, (\alpha)}$. Since the number of coordinates of each weight is $m=\dim M$ and the highest weight assigned is $k$, the \emph{rank} of $\TT k M$ is said to be $(m, \ldots, m)$ and the \emph{degree} is said to be $k$. A \emph{graded space} is a graded bundle over a point, thus a generalization of a vector space.

A total space of a graded bundle $\tau: A\ra M$ is equipped with the canonical  smooth action $h:\R\times A\ra A$ by \emph{homotheties} of the monoid $(\R, \cdot)$, thus a \emph{homogeneity structure} \cite{JG_MR_gr_bund_hgm_str_2011}. For the higher tangent bundle $\tau^k_M:\TT k M \ra M$ this action is defined by a reparametrization of a curve representing the $k$-velocity:
$h(u, \jet k_0\gamma) = \jet k_0\tilde{\gamma}$ where $\tilde{\gamma}(t)=\gamma(ut)$.

The crucial fact is that the converse holds (see \cite{JG_MR_gr_bund_hgm_str_2011}):
any smooth action $h:\R\times A\ra A$ of $(\R, \cdot)$ on a manifold $A$ induces a graded bundle structure on $\tau:=h(0, \cdot): A\ra M:=h(\{0\}\times A)$. Moreover, if $f: A_1\ra A_2$ is a smooth map between graded bundles $\tau_j:A_j\ra M_j$, $j=1,2$, such that $f$ commutes with the homo theses $h_1$, $h_2$ in $A_1$ and $A_2$, respectively (i.e., $f(h_1(u, a))=h_2(u, f(a))$ for any $u\in\R$ and $a\in A_1$) then, automatically, $f$ is a graded bundle morphism. In other words, the categories of graded bundles and homogeneity structures are isomorphic. This fact has many applications even in case of vector bundles (see \cite{JG_MR_higher_vb}).

Iterated higher tangent bundles are examples of \emph{$r$-tuple graded bundles}, since they admit an atlas with coordinates with weights in $\N_0^r$. Any $r$-tuple graded bundle can be considered as a graded bundle by taking the total weight into account.


\paragraph{Tangent lifts of vector bundles and canonical pairings.}
Of our special interests will be (iterated) higher tangent bundles of vector bundles. Let $\sigma:E\ra M$ be a vector bundle\footnote{Throughout the paper we will denote a vector bundle either by $\sigma:E\ra M$ or by $\tau:E\ra M$. In the first case we mean just a vector bundle without any additional structure, whereas in the second case $E$ will be equipped with an algebroid structure (symbol $\tau$ mimics $\tau_M$ used for the standard tangent bundle Lie algebroid). A vector bundle dual to $\sigma$ and $\tau$ will be denoted by $\sigma^\ast:\E\ast \ra M$ and $\tau^\ast:\E\ast \ra M$, respectively.}. It is clear that $\sigma$ may be lifted to  vector bundles $\TT k\sigma:\TT k E\ra\TT k M$, $\Tt k \sigma:\Tt k E\ra \Tt k M$, $\TT k\TT l\sigma:\TT k\TT l E\ra\TT k\TT l M$, etc.

Throughout the paper we denote by $(x^a)$ coordinates on the base of a vector bundle $\sigma:E\ra M$, by $(y^i)$ linear coordinates on fibers of this bundle and by $(\xi_i)$ linear coordinates on fibers of the dual bundle $\sigma^\ast:\E\ast\ra M$. Natural weighted coordinates on (iterated higher) tangent lifts of $\sigma$ and $\sigma^\ast$ are constructed from $x^a$, $y^i$, $\xi_j$ by the mentioned lifting procedure. They are denoted by adding a proper degree to a coordinate name. Degrees will be denoted by bracketed small Greek letters: $(\epsilon)$, $(\alpha)$, $(\beta)$, etc.

Let $\<\cdot,\cdot>_\sigma:E^\ast\times_ME\ra\R$ be the natural pairing.  We can lift it to non-degenerate pairings
$$\<\cdot,\cdot>_{\Tt k\sigma}:\Tt kE^\ast\times_{\Tt k M}\Tt kE\lra\R$$ and $$\<\cdot,\cdot>_{\TT k\sigma}:\TT kE^\ast\times_{\TT k M}\TT kE\lra\R.$$

They are obtained by means of $(\alpha)$-lifts. Namely,  $\<\cdot,\cdot>_{\TT k \sigma} = \<\cdot, \cdot>_{\sigma}^{(k)}$, and $\<\cdot,\cdot>_{\Tt k \sigma} = \<\cdot, \cdot>_{\sigma}^{(1,\ldots, 1)}$, up to the canonical identifications $\TT k(E^\ast \times_M E) \simeq \TT k E^\ast\times_{\TT k M}\TT k E$ and $\Tt k(E^\ast \times_M E) \simeq \Tt k E^\ast\times_{\Tt k M}\Tt k E$. We note that $\<\cdot,\cdot>_{\TT k \sigma}$ is the restriction of $\<\cdot,\cdot>_{\Tt k \sigma}$ to the subbundles $\TT k E \subset \Tt k E$ and
$\TT k E^\ast \subset \Tt k E^\ast$.

In the adapted local coordinates on $\T^{(k)}E$ and $\T^{(k)}\E\ast$ (resp. $\T^{k}E$ and $\T^{k}\E\ast$) denoted by
$(x^{a,(\epsilon)}, y^{i,(\epsilon)})$ and $(x^{a,(\epsilon)}, \xi_i^{(\epsilon)})$ for $\epsilon\in\{0,1\}^k$ (resp.
$(x^{a,(\alpha)}, y^{i,(\alpha)})$ and $(x^{a,(\alpha)}, \xi_i^{(\alpha)})$ for $0\leq \alpha \leq k$) they are given by
\begin{equation}\label{eqn:pairingOne}
\<(x^{a,(\epsilon)}, \xi_i^{(\epsilon)}), (x^{a,(\epsilon)}, y^{i,(\epsilon)})>_{\T^{(k)}\sigma}=
\sum_{i}\sum_{\epsilon\in\{0,1\}^k} \xi_i^{(\epsilon)}\,y^{i,{(1, \ldots,1)-(\epsilon)}},
\end{equation}
and
\begin{equation}\label{eqn:pairingTwo}
\<(x^{a,(\alpha)}, \xi_i^{(\alpha)}), (x^{a,(\alpha)}, y^{i,(\alpha)})>_{\T^{k}\sigma}=
\sum_i\sum_{0\leq\alpha \leq k} \binom{k}{\alpha}\xi_i^{(\alpha)} \,y^{i,(k-\alpha)}.
\end{equation}


\paragraph{The canonical flip $\kappa_{k,M}$ and its dual $\eps_{k,M}$.}
It is well known that the iterated tangent bundle $\T\T M$ admits an involutive double vector bundle isomorphism (called the \emph{canonical flip})
$$\kappa_M:\T\T M\lra\T\T M,$$
which intertwines projections $\tau_{\T M}:\T\T M\ra\T M$ and $\T\tau_M:\T\T M\ra\T M$. This object can be generalized to a family of isomorphisms (also known as \emph{canonical flips})
$$\kappa_{k,M}:\TT k\T M\lra\T\TT kM,\qquad \jet k_t\jet 1_s\gamma(t,s)\longmapsto \jet 1_s\jet k_t\gamma(t,s),$$
which map projection $\TT k\tau_M:\TT k\T M\ra\TT k M$ to $\tau_{\TT kM}:\T\TT kM\ra\TT kM$ over $\id_{\TT kM}$ and $\tau^k_{\T M}:\TT k\T M\ra \T M$ to $\T\tau^k_M:\T\TT kM\ra\T M$ over $\id_{\T M}$. Morphisms $\kappa_{k,M}$ can be also defined inductively as follows: $\kappa_{1,M}:=\kappa_M$ and $\kappa_{k+1,M}:=\T\kappa_{k,M}\circ\kappa_{\TT kM}\big|_{\TT{k+1}\T M}$, i.e.,
\begin{equation}\label{eqn:kappa_kM}
\xymatrix{
\T\TT k\T M\ar[rr]^{\T\kappa_{k,M}} &&\T\T\TT kM\ar[rr]^{\kappa_{\TT kM}} && \T\T\TT kM\\
\TT{k+1}\T M\ar@{-->}[rrrr]^{\kappa_{k+1,M}}\ar@{_{(}->}[u] &&&&\T\TT{k+1}M.\ar@{_{(}->}[u] }
\end{equation}
Local description of $\kappa_{k,M}$ is very simple. If $x^{a,(\alpha,\epsilon)}$ are natural coordinates on $\TT k\T M$ and $x^{a,(\epsilon,\alpha)}$ are natural coordinates on $\T\TT k M$ (here $\alpha=0,1,\hdots,k$ and $\epsilon\in\{0,1\}$), then
$\kappa_{k,M}$ changes  $x^{a,(\alpha,\epsilon)}$ to  $x^{a,(\epsilon,\alpha)}$.

The canonical flip $\kappa_{k,M}$ enables us to introduce its dual $\eps_{k,M}:\T^\ast\TT kM\ra\TT k\T^\ast M$ (see also \cite{Cantr_Cramp_Inn_can_isom_1989}) defined via the equality
\begin{equation}\label{eqn:kappa_eps}
\<\Psi,\kappa_{k,M}\circ V>_{\tau_{\TT kM}}=\<\eps_{k,M}\circ\Psi,V>_{\TT k\tau_M},
\end{equation}
where $V\in \TT k\T M$ and $\Psi\in\T^\ast\TT kM$, are vectors such that both pairings make sense. The construction of $\eps_{k,M}$ and its relation to both canonical pairings can be schematically described via the following diagram:
$$\xymatrix{&\T\TT k M\ar@{..>}[ld] &&\TT k\T M\ar@{..>}[rd]\ar[ll]_{\kappa_{k,M}}&\\
\R &\<\cdot,\cdot>_{\tau_{\TT kM}}&&\<\cdot,\cdot>_{\TT k\tau_M}&\R\\
&\T^\ast\TT k M \ar@{..>}[lu]\ar[rr]^{\eps_{k,M}} &&\TT k\T^\ast M.\ar@{..>}[ur]&}$$

In coordinates, $(x^{a, (\alpha)}, p_{a, (\alpha)} = \partial_{x^{a, (\alpha)}})$ on $\TT\ast\TT kM$ and $(x^{a, (\alpha)}, p_a^{(\alpha)})$ on $\TT k\TT\ast M$ (adapted from
standard coordinates $(x^a, p_a)$ on $\TT\ast M$), we find from \eqref{eqn:pairingTwo} that
\begin{equation}\label{eqn:eps_kM}
\varepsilon_{k, M}\left(x^{a, (\alpha)}, p_{a, (\alpha)}\right) = \left(x^{a, (\alpha)}, p_a^{(\alpha)} = \binom{k}{\alpha}^{-1} p_{a, (k-\alpha)}\right).
\end{equation}


\subsection{Almost Lie algebroids and Zakrzewski morphisms}\label{ssec:algebroids}

An almost Lie algebroid (AL algebroid, in short) is a structure which satisfies all axioms of a Lie algebroid except for the Jacobi identity. Let us comment that we do not assume the Jacobi identity as it is not necessary in the construction of the tower of higher algebroids $(\E k,\kappa_k)$ performed in Section \ref{sec:jets_red}. Although we do not provide any example of using higher-order algebroids associated with an almost Lie algebroid, there is a possible field of such applications in nonholonomic mechanics \cite{JG_Inn_nh_constr_2009}.

\begin{df}[\cite{JG_MJ_pmp_2011}]\label{def:al_algebroid} An almost Lie algebroid is a vector bundle $\tau:E\to M$ equipped with a vector bundle
morphism $\rho: E\to\T M$, called the anchor map, and a skew-symmetric bracket
$[\cdot, \cdot]: \Sec(E)\times \Sec(E)\to \Sec(E)$ on the space of sections of $\tau$ such that
\begin{enumerate}[(a)]
\item $[X, fY] = f [X, Y] + \rho(X)(f) Y$ (the Leibniz rule),
\item $\rho([X, Y]) = [\rho(X), \rho(Y)]_{TM}$ (the compatibility of the anchor and the bracket).
\end{enumerate}
\end{df}
In local coordinates $(x^a, y^i)$ on a vector bundle $\tau: E\to M$, an AL algebroid can be described
in terms of local functions $\rho_i^a(x)$, $c_{ij}^k(x)$ on $M$
given by
$$
[e_i, e_j] = c_{ij}^k(x)\, e_k, \quad \rho(e_i) = \rho_i^a(x)\, \partial_{x^a},
$$
where $(e_i)$ is a basis of local sections of $\tau$ such that $y^i(e_j) = \delta^i_j$.
The skew-symmetry of $[\cdot, \cdot]$ results in the skew-symmetry $c_{ij}^k(x)$ in lower indices,
while the axiom (b) of an AL algebroid reads as
\begin{equation}\label{eqn:AL}
\frac{\partial \rho_k^a(x)}{\partial x^b} \rho_j^b(x) -   \frac{\partial \rho_j^a(x)}{\partial x^b} \rho_k^b(x) =
\rho_i^a(x) c_{jk}^i(x).
\end{equation}
A basic example is the tangent bundle $\T M$ with the standard Lie bracket of vector fields and $\rho = \id_{TM}$.

The canonical flip $\kappa_M: \T\T M \to \T\T M$ has its algebroid version
$\kappa: \T E \relto \T E$ which is no longer a bundle map but a \emph{Zakrzewski morphism} (\emph{ZM} in short; see Appendix \ref{app:ZM}). For understanding this paper it is generally enough to know that a ZM is a relation which is dual to a true vector bundle morphism (see Theorem~\ref{thm:Z}). ZMs will be often presented in  a form of diagram \eqref{ZM:general}.

A ZM $\kappa$ turned out to be very useful in defining  geometric objects in the context of mechanics on algebroids  (\cite{KG_JG_var_calc_alg_2008,KG_JG_PU_geom_mech_alg_2006}). It will play a crucial role also in our paper. Let us recall the geometric construction of $\kappa$.

A skew-symmetric bracket $[\cdot, \cdot]$ on $\Sec(E)$ satisfying
the Leibniz rule gives rise to a linear bi-vector field  $\Lambda$
on the total space $\E\ast$ of the dual bundle to $\tau$. This
correspondence is completely analogous to one between a Lie algebroid
structure on $\tau:E\to M$ and a linear Poisson tensor on $\tau^\ast:\E\ast\ra M$.
In local coordinates $(x^a, \xi_i)$ on $E^*$, dual to $(x^a, y^i)$
on $E$, we have (\cite{JG_PU_lie_alg_pois_nij_1997,JG_PU_alg_gen_diff_calc_mfds_1999})
$$
\Lambda(x,\xi) =  \frac{1}{2} c_{ij}^k(x)\, \xi_k\partial_{\xi_i} \wedge
\partial_{\xi_j} + \rho_i^a(x)\, \partial_{\xi_i}\wedge \partial_{x^a}.
$$
Let us interpret $\Lambda$ as a map $\tilde{\Lambda} : \TT\ast \E\ast \to \T \E\ast$.
Let $\mathbf{R}: \TT\ast E \to \TT\ast \E\ast$
denote the canonical anti-symplectomorphism (\cite{KG_JG_PU_geom_mech_alg_2006, JG_PU_alg_gen_diff_calc_mfds_1999}).
We define $\varepsilon: \TT\ast E \to \T \E\ast$ as the composition $\varepsilon:=\tilde{\Lambda}\circ  \mathbf{R}$.
It is a vector bundle morphism over $\rho: E\to \T M$:
$$
\xymatrix{
\TT\ast E \ar[d]_{\tau^\ast_E} \ar[rr]^{\varepsilon} && \T \E\ast \ar[d]^{\T\tau^\ast} \\
E \ar[rr]^{\rho} && \T M
}.
$$
In local coordinates $(x^a, y^i, p_b, \pi_j)$ on $\TT\ast E$ and $(x^a, \xi_i, \dot{x}^a, \dot{\xi}_i)$
on $\T\E\ast$ the morphism $\varepsilon$ reads as
$$
\varepsilon(x^a, y^i, p_b, \pi_j) = (x^a, \xi_i = \pi_i, \dot{x}^a = \rho_i^a(x) y^i, \dot{\xi}_j =
c_{ij}^k(x)y^i\pi_k  +\rho_j^a(x) p_a).
$$

Relation $\kappa$ is a Zakrzewski morphism dual  to $\varepsilon$:
$$
\xymatrix{
\T E \ar[d]_{\tau_E} && \T E \ar[d]^{T\tau} \ar@{-|>}[ll]_{\kappa}  \\
E \ar[rr]^{\rho} && \T M
}.$$
That is, $\varepsilon$ and $\kappa$ are related by the following analog of formula \eqref{eqn:kappa_eps}: vectors $X,Y\in\T E$ are $\kappa$-related if and only if
\begin{equation}\label{eqn:kappa_eps_E}
\<\varepsilon(\omega),X>_{\T\tau}=\<\omega,Y>_{\tau_E}
\end{equation}
for every $\omega\in \TT\ast E$ such that $\tau^\ast_E(\omega)=\tau_E(Y)$.

Formula \eqref{eqn:kappa_eps_E} enables us to calculate the  local form of $\kappa$.
Consider namely $X\sim(x^a, y^i, \dot{x}^a, \dot{y}^i)$, $Y\sim(\und{x^a}, \und{y^i}, \und{\dot{x}^a}, \und{\dot{y}^i})$ and
$\omega\sim(\und{x^a}, \und{y^i}, \und{p_b}, \und{\pi_j})$ as above. According to \eqref{eqn:pairingTwo},
\begin{align*}
\<\omega, Y>_{\tau_E}&= \und{\dot{x}}^a\und{p_a} + \und{\dot{y}^j}\und{\pi_j}, \quad \text{while}\\
\<\varepsilon(\omega), X>_{T\tau_M}&= \xi_j\dot{y}^j + \dot{\xi}_j y^j =
\und{\pi_j} \dot{y}^j + c_{ij}^k(x) \und{y}^i\und{\pi_k} y^j + \rho_j^a(x)\und{p_a} y^j.
\end{align*}
By \eqref{eqn:kappa_eps_E}, we find that
 $Y\in \kappa(X)$ if and only if
\begin{equation}\label{eqn:kappa_coord}
\underline{x^a} =  x^a, \quad \dot{x}^a = \rho_j^a(x) \,\underline{y^j}, \quad \underline{\dot{x}^a} = \rho_j^a(x) \,y^j, \quad
\underline{\dot{y}^k} = \dot{y}^k + c_{ij}^k(x)\, \underline{y^i} y^j.
\end{equation}


\medskip
We observe that $\kappa$ fully encodes the AL-algebroid structure functions $\rho_i^a(x)$ and $c_{ij}^k(x)$.
Hence it makes sense to think about an AL algebroid as a ZM $\kappa$ satisfying some properties. We will explore this point of view in the forthcoming publication \cite{MJ_MR_abstract_h_alg}. For this moment let us only state the following simple, yet not commonly-known, relation between the ZM $\kappa$ and the algebroid bracket $[\cdot,\cdot]$.

\begin{prop}\label{prop:kappa_bracket}
Given two sections $a,b:M\ra E$ of $\tau$ their bracket at $x\in M$ is equal to the vector $A-\kappa_{a(x)}(B)$, where $A=\T a(\rho(b(x)))\in\T_{a(x)}E$; $B=\T b(\rho(a(x)))\in\T_{b(x)}E$ and $\kappa_{a(x)}(B)$ is the unique vector in $\kappa(B)\cap\T_{a(x)}E$ (cf. Appendix \ref{app:ZM}):
\begin{equation}\label{eqn:kappa_bracket}
[a,b](x)=A-\kappa_{a(x)}(B).
\end{equation}
(Note that $A$ and $\kappa_{a(x)}(B)$ project to the same vector $\rho(b(x))\in\T_x M$, hence $A-\kappa_{a(x)}(B)$ is vertical, and it can be considered as an element of $E$.)
\end{prop}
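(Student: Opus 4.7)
The plan is to establish the identity by a direct computation in local coordinates, using the explicit formula \eqref{eqn:kappa_coord} for $\kappa$ together with the coordinate description of tangent lifts of sections. First, fix adapted coordinates $(x^a, y^i)$ on $E$ associated with a local basis of sections $(e_i)$ and write $a = a^i(x)\, e_i$, $b = b^i(x)\, e_i$. For any section $c = c^i(x)\, e_i$ and any $v = v^a \partial_{x^a} \in \T_x M$, the tangent map $\T c(v) \in \T_{c(x)} E$ has $(\dot x^a, \dot y^i) = (v^a, v^b \partial_b c^i(x))$. Substituting $(c,v) = (a, \rho(b(x)))$ and $(c,v) = (b, \rho(a(x)))$ produces the explicit coordinates of $A$ and $B$.

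Second, to identify $\kappa_{a(x)}(B)$, plug $X = B$ into \eqref{eqn:kappa_coord} and enforce the base-point constraint $\tau_E(Y) = a(x)$, i.e.\ $\underline{y^i} = a^i(x)$. The first three equations of \eqref{eqn:kappa_coord} are then self-consistent: they confirm that $\kappa_{a(x)}(B)$ exists, is based at $a(x)$, and projects via $\T\tau$ to $\rho(b(x))$ --- the same $\T\tau$-projection as $A$. Consequently $A - \kappa_{a(x)}(B)$ lies in the vertical subspace of $\T_{a(x)} E$, canonically identified with $E_x$. The fourth equation then yields the explicit $y^k$-component of this vertical vector in terms of $\partial_b a^k$, $\partial_b b^k$, $\rho_j^a$ and $c_{ij}^k$.

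To conclude, expand $[a, b] = [a^i e_i, b^j e_j]$ via the Leibniz rule and skew-symmetry from Definition \ref{def:al_algebroid}, together with $[e_i, e_j] = c_{ij}^k e_k$ and $\rho(e_i) = \rho_i^a \partial_{x^a}$. This gives the standard local expression $[a, b]^k = \rho_i^b a^i \partial_b b^k - \rho_j^b b^j \partial_b a^k + c_{ij}^k a^i b^j$, matching the $y^k$-component computed in the previous paragraph. The main obstacle is purely bookkeeping: careful tracking of indices and signs, with repeated use of the skew-symmetry of $c_{ij}^k$ (needed to re-express products of the form $c_{ij}^k \underline{y^i} y^j$ appearing in \eqref{eqn:kappa_coord} in a form matching the Lie bracket expansion). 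Notably, the algebroid compatibility axiom \eqref{eqn:AL} plays no role in the argument, in accord with the purely tensorial, pointwise character of the claim.
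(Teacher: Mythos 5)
Your overall strategy --- a direct check in the coordinates of \eqref{eqn:kappa_coord} --- is exactly what the paper intends: its ``proof'' consists of the remark that the identity can be checked directly in local coordinates, with a pointer to \cite{Kolar_Michor_Slovak_nat_oper_diff_geom_1993} for the tangent case. So the route is the right one, and your closing observation that axiom \eqref{eqn:AL} is never used (only the Leibniz rule and skew-symmetry enter) is correct. The problem is the decisive step, the claimed matching of the two coordinate expressions: it does not go through as you state it. Carrying out your own recipe, $A=\T a(\rho(b(x)))$ has fibre components $\dot y^k(A)=\rho^c_j b^j\,\partial_{x^c}a^k$, while \eqref{eqn:kappa_coord} applied to $X=B=\T b(\rho(a(x)))$ with $\underline{y^i}=a^i$ gives $\dot y^k\bigl(\kappa_{a(x)}(B)\bigr)=\rho^c_j a^j\,\partial_{x^c}b^k+c^k_{ij}a^i b^j$. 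Hence the vertical vector $A-\kappa_{a(x)}(B)$ has components $\rho^c_j b^j\partial_{x^c}a^k-\rho^c_j a^j\partial_{x^c}b^k-c^k_{ij}a^i b^j$, whereas the Leibniz expansion you quote gives $[a,b]^k=\rho^c_i a^i\partial_{x^c}b^k-\rho^c_j b^j\partial_{x^c}a^k+c^k_{ij}a^i b^j$. These are negatives of each other: your computation actually establishes $A-\kappa_{a(x)}(B)=[b,a](x)$, i.e.\ $[a,b](x)=\kappa_{a(x)}(B)-A$. The skew-symmetry of $c^k_{ij}$, which you invoke to reconcile the two sides, acts only on the $c$-term and cannot repair the mismatch in the anchor-derivative terms ($\rho(b)(a^k)$ versus $\rho(a)(b^k)$).

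So, as a verification of the formula exactly as printed, your last step fails; what the computation proves is the formula with $a$ and $b$ interchanged in the definitions of $A$ and $B$ (equivalently, with an extra minus sign). To be fair, this reflects a sign inconsistency in the statement itself relative to \eqref{eqn:kappa_coord} under the conventions $[e_i,e_j]=c^k_{ij}e_k$ and \eqref{eqn:AL}. Two quick cross-checks make this visible: for a Lie algebra ($M$ a point, $\rho=0$) the Introduction's description of $\kappa$ gives $\kappa_{a}(B)=(a,[a,b])$ while $A$ is the zero vector at $a$, so $A-\kappa_{a}(B)=-[a,b]$; and for $E=\T M$ the classical identity $[X,Y]=\T Y\circ X-\kappa_M\circ \T X\circ Y$ of \cite{Kolar_Michor_Slovak_nat_oper_diff_geom_1993} reads, in the notation of the Proposition, $[b,a](x)=A-\kappa_{a(x)}(B)$. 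In your write-up you should therefore either prove the corrected identity $[a,b](x)=\kappa_{a(x)}(B)-A$ or explicitly flag and resolve the sign-convention discrepancy; simply asserting that the displayed expressions ``match'' is the gap.
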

The above characterization can be checked directly in local coordinates. For the proof in the special case when $E$ is the tangent bundle algebroid the reader may consult \cite{Kolar_Michor_Slovak_nat_oper_diff_geom_1993}.
\bigskip


To end this part let us state some properties of the ZM $\kappa$ which will be important in our considerations.

\begin{prop}\label{prop:properties_kappa} ZM $\kappa$ described above has the following properties:
\begin{enumerate}[(a)]
\item $\kappa$ is symmetric: $Y\in \kappa(X)$ if and only if $X\in\kappa(Y)$,
i.e., $\kappa^{T} = \kappa$.

\item Axiom (b) in
Definition \ref{def:al_algebroid} is equivalent to the commutativity of the following diagram:
\begin{equation}\label{eqn:kappa_AL}
\xymatrix{
\tgT E \ar[d]_{T\rho}  \ar@{-|>}[rr]^\kappa && \tgT E\ar[d]^{\tgT \rho}\\
\tgT\tgT M \ar[rr]^{\kappa_M} && \tgT\tgT M
}.\end{equation}

\item The diagram
\begin{equation}\label{eqn:kappa_diagram}
\xymatrix{
\tgT E \ar[d]_{\tau_E}  \ar@{-|>}[rr]^{\kappa} && \tgT E \ar[d]^{T\tau}  \\
E \ar[rr]^{\rho} && \tgT M
}
\end{equation}
is commutative, i.e., $\T \tau \circ \kappa = \rho\circ \tau_E$ as a composition of relations.

\item An element $A\in \T E$ is $\kappa$-invariant (i.e., $(A,A)\in \kappa\subset\T E\times\T E$) if and only if
\begin{equation}\label{eqn:prop_E2}
\T\tau(A)=\rho\circ\tau_E(A).
\end{equation}
\end{enumerate}
\end{prop}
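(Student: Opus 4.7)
All four statements can be verified directly from the local formula \eqref{eqn:kappa_coord} for $\kappa$, together with the coordinate form \eqref{eqn:AL} of the anchor-compatibility axiom. I would organise the proof by first disposing of the three short items (a), (c), (d) and then giving the substantive computation for (b).

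Parts (a), (c), (d) follow almost at once from \eqref{eqn:kappa_coord}. For (c), for any $Y\in\kappa(X)$ one reads off $\tau_E(Y)=(x^a,\underline{y^i})$ and $\T\tau(X)=(x^a,\dot{x}^a)=(x^a,\rho_j^a(x)\underline{y^j})$, so $\T\tau(X)=\rho(\tau_E(Y))$, which is exactly the commutativity of \eqref{eqn:kappa_diagram}. For (a), swapping underlined and non-underlined coordinates in \eqref{eqn:kappa_coord} leaves the first three identities unchanged, while the fourth transforms into $\dot{y}^k-\underline{\dot{y}^k}=c_{ij}^k(x)\,y^i\underline{y^j}$, equivalent to the original one via the skew-symmetry $c_{ij}^k=-c_{ji}^k$. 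For (d), putting $Y=X$ in \eqref{eqn:kappa_coord} makes the second and third equations coalesce into the single relation $\dot{x}^a=\rho_j^a(x)y^j$, which is the coordinate form of $\T\tau(A)=\rho\circ\tau_E(A)$; the fourth becomes $0=c_{ij}^k(x)y^i y^j$, automatic by skew-symmetry.

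The substantive item is (b). I would write out, in adapted coordinates on $\T E$ and $\T\T M$, both routes in the diagram \eqref{eqn:kappa_AL}. In natural coordinates $(x^a,u^a,\dot{x}^b,\dot{u}^b)$ on $\T\T M$, the canonical flip $\kappa_M$ exchanges $u^a\leftrightarrow\dot{x}^a$ and fixes the other two entries. Tracking $X=(x^a,y^i,\dot{x}^b,\dot{y}^j)\in\T E$ through both compositions and using \eqref{eqn:kappa_coord} to express the coordinates of $Y\in\kappa(X)$, the first three groups of coordinates of $\T\rho(Y)$ and $\kappa_M(\T\rho(X))$ agree automatically; equality of the last (the $\dot{u}^a$-component) reduces, after substitution and relabelling of dummy indices, to
\[
\Bigl(\tfrac{\partial\rho_k^a}{\partial x^b}\rho_j^b(x)-\tfrac{\partial\rho_j^a}{\partial x^b}\rho_k^b(x)\Bigr)\,\underline{y^j}\,y^k \;=\; \rho_i^a(x)\,c_{jk}^i(x)\,\underline{y^j}\,y^k,
\]
which must hold for arbitrary fibre coordinates $y^k,\underline{y^j}$. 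Separating coefficients of the independent monomials $\underline{y^j}y^k$ gives exactly \eqref{eqn:AL}, so the diagram commutes if and only if axiom (b) of Definition \ref{def:al_algebroid} holds.

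The main obstacle is the index bookkeeping in part (b): the products $y^i\underline{y^j}$ are not symmetric in $y\leftrightarrow\underline y$, and the skew-symmetry of $c_{ij}^k$ in its lower indices must be exploited at exactly the right step when matching against \eqref{eqn:AL}. A more conceptual but essentially equivalent route would use Proposition~\ref{prop:kappa_bracket} to translate the diagrammatic equality $\T\rho\circ\kappa=\kappa_M\circ\T\rho$ into the bracket identity $\rho([a,b])=[\rho(a),\rho(b)]_{\T M}$ for all sections $a,b$ of $\tau$; however, I would present the coordinate version above, since it is both shorter and directly yields \eqref{eqn:AL} in the desired ``if and only if'' form.
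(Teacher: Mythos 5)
Your proposal is correct and takes essentially the same route as the paper: the paper likewise disposes of (a), (c), (d) by direct inspection of \eqref{eqn:kappa_coord}, and proves (b) by exactly your computation -- writing out $\kappa_M(\T\rho(X))=\T\rho(Y)$ in coordinates, substituting the $\kappa$-relations for $\dot x^a$, $\und{\dot x^a}$, $\und{\dot y^i}$, and reducing the top-component equality to \eqref{eqn:AL}. (A cosmetic point: in (c) you verified $\T\tau(X)=\rho(\tau_E(Y))$, the transposed reading of \eqref{eqn:kappa_diagram}; the stated orientation $\T\tau(Y)=\rho(\tau_E(X))$ follows just as immediately from the third equation of \eqref{eqn:kappa_coord}, or from part (a).)
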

\begin{proof}
Properties (a), (c) and  (d) can be easily checked in local coordinates using formula \eqref{eqn:kappa_coord}.

Property (b) means that $\kappa_M(\tgT \rho(X))= \tgT\rho(Y)$ whenever $Y\in \kappa(X)$.
For proof consider $X\sim(x^a, y^i, \dot{x}^b, \dot{y}^j)$ and
$Y\sim(x^a, \und{y^i}, \und{\dot{x}^b}, \und{\dot{y}^j})$ such that $Y\in\kappa(X)$. Then
$\kappa_M(\tgT \rho(X))= \tgT \rho (Y)$ means that
$\rho_i^a(x) y^i = \und{\dot{x}^a}$, $\rho_i^a(x) \und{y^i} = \dot{x}^a$ and
$$
\frac{\partial \rho_i^a(x)}{\partial x^b} y^i \dot{x}^b  + \rho_a^i(x) \dot{y}^i =
\frac{\partial \rho_i^a(x)}{\partial x^b} \und{y^i} \und{\dot{x}^b}  + \rho_a^i(x) \und{\dot{y}^i},
$$
what simplifies to \eqref{eqn:AL} after substituting for $\dot{x}^a$, $\und{\dot{x}^a}$ and $\und{\dot y^i}$ respective formulas from \eqref{eqn:kappa_coord}.
\end{proof}


\newpage
\section{Reductions of Lie groupoids}\label{sec:red_lie}

In this section we  study reductions of higher tangent bundles of a Lie groupoid $\G$. As a result we construct a tower of graded bundles $\A^k(\G)\ra\A^{k-1}(\G)$ each equipped with a canonical relation $\kappa_k:\TT k\A^1(\G)\relto\T\A^k(\G)$. The pair $(\A^k(\G),\kappa_k)$ constitutes the \emph{$\thh{k}$-order Lie algebroid of a Lie groupoid $\G$}.

\paragraph{The Lie algebroid of a Lie groupoid \cite{MJ_phd_2011,Mackenzie_lie_2005}.}
Consider a Lie groupoid $\G$ with source and target maps $\alpha,\beta:\G\ra M$, inclusion map $\iota:M\ra\G$ and partial multiplication $(h,g)\mapsto hg$; $\G\ast\G=\{(h,g):\alpha(h)=\beta(g)\}\ra\G$. Manifold $\G$ is foliated by \emph{$\alpha$-fibers} $\G_x=\{g\in\G:\alpha(g)=x\}$ where $x\in M$. We will refer to objects associated with this foliation by adding a superscript $\alpha$ to a proper object.

In particular, $g(t)\in\G^\alpha$ will denote a curve $g(t)$ lying in a single $\alpha$-leaf $\G_x\subset\G$. The distribution tangent to the leaves of this foliation will be denoted by $\T\G^\alpha$. Note that $\T\G^\alpha$ has a vector bundle structure over $\G$ as a subbundle of $\tau_\G:\T\G\ra\G$ consisting of these elements of $\T\G$ which belong to $\T\G_x$ for some $\alpha$-leaf $\G_x$. In a similar manner we can consider subbundles $\TT k\G^\alpha\subset\TT k \G$, $\Tt k\G^\alpha\subset\Tt k \G$, etc. (in general, $\TT A\G^\alpha\subset\TT A\G$ for a Weil functor $\TT A$), which consist of all higher (iterated) velocities tangent to some $\alpha$-leaf $\G_x\subset\G$. Note the difference between $\T\TT k\G^\alpha$ -- the union of all spaces $\T\TT k\G_x$ for all $\alpha$-leaves $\G_x$ -- and $\T\left(\TT k\G^\alpha\right)$ -- the tangent space to $\TT k \G^\alpha$. Clearly $\T\TT k\G^\alpha\subset\T(\TT k\G^\alpha)$ but, in general, there is no equality. However, we claim that

\begin{prop}\label{prop:G_alpha}
For $k\geq 2$
$$\T\left(\TT k\G^\alpha\right)\cap\TT k\left(\T\G^\alpha\right)=\TT{k+1}\G^\alpha$$
as subsets of $\Tt{k+1}\G$.
\end{prop}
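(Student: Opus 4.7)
The plan is to establish both inclusions separately. The easy direction $\TT{k+1}\G^\alpha \subseteq \T(\TT k\G^\alpha)\cap \TT k(\T\G^\alpha)$ follows by unwinding definitions. Given a representative $V=\jet{k+1}_0 g(t)$ with $g(t)\in\G_x$ for all $t$ near $0$, the canonical inclusion $\iota^{1,k}$ reads $V$ as the velocity $\jet 1_s\jet k_t g(s+t)$ of the curve $s\mapsto \jet k_t g(s+t)$, which takes values in $\TT k\G^\alpha$ (each $k$-jet being tangent to $\G_x$), so $V\in\T(\TT k\G^\alpha)$. Symmetrically, under $\iota^{k,1}$, $V$ reads as $\jet k_t\jet 1_s g(s+t)$, a $k$-jet of a curve $t\mapsto \jet 1_s g(s+t)$ with values in $\T\G^\alpha$, hence $V\in\TT k(\T\G^\alpha)$.

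For the reverse inclusion I would first prove the stronger, manifold-level identity
$$\T\TT k N\cap \TT k\T N=\TT{k+1}N\quad\text{as subsets of }\Tt{k+1}N,\qquad k\geq 2,$$
valid for any smooth manifold $N$, and only afterwards handle the $\alpha$-fiber condition. Working in adapted coordinates $(z^{a,(\epsilon)})_{\epsilon\in\{0,1\}^{k+1}}$ on $\Tt{k+1}N$, the image of $\T\iota^k_N\colon \T\TT k N\hookrightarrow \Tt{k+1}N$ is characterized by $z^{a,(\epsilon)}$ being a function of one end bit of $\epsilon$ and the sum of the remaining $k$ bits, while the image of $\iota^k_{\T N}\colon \TT k\T N\hookrightarrow \Tt{k+1}N$ is characterized by a symmetric condition with the distinguished bit sitting at the opposite end. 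A short combinatorial check---exchanging bits between the middle block and the two end positions---shows that the conjunction of the two conditions forces $z^{a,(\epsilon)}$ to depend only on $|\epsilon|$, which is exactly the condition cutting out $\TT{k+1}N\subset \Tt{k+1}N$. The argument requires the middle block to contain at least one bit, which translates to the hypothesis $k\geq 2$; at $k=1$ the argument collapses and indeed both images coincide with all of $\Tt 2 N=\T\T N$, so the identity fails.

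Applying the general identity to $N=\G$, any $V$ in the claimed intersection lies in $\TT{k+1}\G$, hence $V=\jet{k+1}_0 g(t)$ for some smooth curve $g$ in $\G$. It remains to check that $g$ can be taken inside a single $\alpha$-fiber. For this I would exploit the condition $V\in\T(\TT k\G^\alpha)$: under $\iota^{1,k}$, $V$ reads $\jet 1_s\jet k_t g(s+t)$, and writing $\TT k\G^\alpha=(\TT k\alpha)^{-1}(M)$ with $M\hookrightarrow \TT k M$ the zero section, tangency to $\TT k\G^\alpha$ at the base point amounts to $\jet 1_s\jet k_t(\alpha\circ g)(s+t)$ lying in $\T M\subset \T\TT k M$. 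Unpacking this, the derivatives of $\alpha\circ g$ at $t=0$ of orders $1,2,\ldots,k+1$ must all vanish, which is precisely the condition $V\in \TT{k+1}\G^\alpha$.

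The main obstacle is the manifold-level identity in the second paragraph; its proof is a simple but unavoidable combinatorial calculation in adapted coordinates, and its breakdown at $k=1$ directly explains the stated hypothesis. The remaining work---converting between jet representatives, and translating the $\alpha$-condition into the vanishing of derivatives of $\alpha\circ g$---is essentially routine bookkeeping.
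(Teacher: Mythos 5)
Your proposal is correct and follows essentially the same route as the paper's proof: the easy inclusion on representatives, reduction of the hard inclusion to the ambient identity $\T\TT k\G\cap\TT k\T\G=\TT{k+1}\G$ inside $\Tt{k+1}\G$, and then using $\T\TT k\alpha\big|_{\TT{k+1}\G}=\TT{k+1}\alpha$ together with tangency to $\TT k\G^\alpha$ to push the image into $\T M\subset\T\TT kM$ and conclude that $\TT{k+1}\alpha(V)$ is null, hence $V\in\TT{k+1}\G^\alpha$. The only notable difference is that you actually prove the ambient identity by the coordinate/combinatorial argument (and thereby locate exactly where $k\geq 2$ is needed), whereas the paper merely asserts it.
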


\begin{proof}
Clearly,
$$\TT{k+1}\G^\alpha=\T\TT k\G^\alpha\cap\TT k\T\G^\alpha\subset\T\left(\TT k\G^\alpha\right)\cap\TT k\left(\T\G^\alpha\right).$$
To prove the opposite inclusion observe first that
$$\T\left(\TT k\G^\alpha\right)\cap\TT k\left(\T\G^\alpha\right)\subset \T\TT k\G\cap\TT k\T\G=\TT{k+1}\G.$$
Observe also that since $\TT k\G^\alpha$ consists of all $k$-velocities tangent to some $\alpha$-leaf $\G_x\subset\G$, then $v^k\in\TT k\G$ lies in $\TT k\G^\alpha\subset\TT k\G$ if and only if $\TT k\alpha(v^k)$ is the null element (the class of the constant curve) in $\TT k M$.

Take now any $v\in \T(\TT k\G^\alpha) \cap \TT k(\T\G^\alpha) \subset \TT{k+1}\G$ and let $w=\TT{k+1}\alpha(v)\in \TT{k+1}M$. Let us consider the following diagram
$$
\xymatrix{
**[l] v\in \tgT(\tgT^k\G^\alpha)\cap \tgT^k(\tgT\G^\alpha) \subset \tgT\tgT^k\G  \ar[r]^{\T\TT k\alpha} \ar[d]_{\tau_{\TT k\G}} & **[r] \tgT\tgT^k M
\supset \tgT^{k+1}M \ni w \ar[d]^{\tau_{T^kM}}  \\
\tgT^k\G \ar[r]^{\TT k\alpha} &  \tgT^kM.
}
$$
Note that $w=\T\TT k\alpha(v)$ as $\T\TT k\alpha\big|_{\TT{k+1}\G}=\TT{k+1}\alpha$. From the diagram above we find that
$$
\tau_{\TT kM}(w) =\TT k\alpha(\tau_{\TT k\G}(v))\in\TT k\alpha(\TT k\G^\alpha) = M \subset\TT kM.
$$
Moreover, since $v$ is tangent to $\TT k\G^\alpha$ which is mapped by $\TT k\alpha$ to $M\subset \TT kM$ (the set of null elements in $\TT k M$),
the tangent map $\T\TT k\alpha$ maps $v$ to a tangent vector to $M$, so $w\in \T M\subset \T\TT k M$. But only zero vectors of $\T M\subset \T\TT kM\cap\TT{k+1}M$ are mapped by
$\tau_{\TT kM}$ to a null element of $\TT kM$. Therefore, $w$ is a null element of $\T\TT kM$ so $v\in \TT{k+1}\G^\alpha$.
\end{proof}

 Consider now the right action of $\G$ on itself $R_g:h\mapsto hg$.  This action maps $\alpha$-fibers to $\alpha$-fibers, and hence $\T R_g$ preserves $\T\G^\alpha$. A vector field $X\in \Sec(\T\G^\alpha)$ is said to be a \emph{right-invariant vector field (RIVF) on $\G$} if $\T R_g(X(h))=X(hg)$. Note that every RIVF on $\G$ is uniquely determined by its values along the identity section $\iota(M)\subset\G$, i.e., $X(g)=R_g(X(\iota\circ\beta(g)))$. Therefore it is natural to consider $\A(\G)=\T\G^\alpha\big|_{\iota(M)}=\bigcup_{x\in M}\T_{\iota(x)}\G^\alpha$, which has a structure of a vector bundle over $M\approx \iota(M)$ as a subbundle of $\tau_\G\big|_{\T\G^\alpha}:\T\G^\alpha\ra\G$. We will denote the projection $\tau_\G\big|_{\A(\G)}:\A(\G)\ra\iota(M)\approx M$ by $\tau$.   Sections of $\tau$ can be canonically identified with RIVFs on $\G$. The bracket of RIVFs induces a bracket on $\Sec(\tau)$, which provides $\A(G)$ with the structure of a Lie algebroid. It is commonly known as \emph{the Lie algebroid of a Lie groupoid}.

Observe that maps  $\T R_{g^{-1}}:\T_g\G^\alpha\ra\T_{\iota\circ\beta(g)}\G^\alpha$, considered point-wise for all $g\in\G$, give rise to a vector bundle map (the \emph{reduction map})
\begin{equation}\label{eqn:A_1}
\xymatrix{
\T\G^\alpha\ar[rr]^{\RR^1}\ar[d]^{\tau_\G\big|_{\T\G^\alpha}} && \A(G)\ar[d]^\tau\\
\G\ar[rr]^\beta && M,}\end{equation}
which is a fiber-wise isomorphism. It plays a role analogous to the role of the map $\T G\ra\mathfrak{g}:=\T_eG$ given by $\dot g\mapsto \dot g g^{-1}$ in the theory of Lie groups and Lie algebras.


\paragraph{Reduction of $\TT k\G^\alpha$.}
In a similar manner the higher tangent lift of the action $\TT kR_g$ preserves $\TT k\G^\alpha$ and hence the collection of
maps $\TT k R_{g^{-1}}:\T_g\G^\alpha\ra\TT k_{\iota\circ\beta(g)}\G^\alpha$, considered point-wise for all $g\in\G$, give rise to the map of graded bundles (the \emph{$\thh{k}$ reduction map})
\begin{equation}\label{eqn:A_k}
\xymatrix{
\TT k\G^\alpha\ar[rr]^{\RR^k}\ar[d]^{\tau^k_\G\big|_{\TT k\G^\alpha}} && \A^k(G)\ar[d]^{\tauE k{}}\\
\G\ar[rr]^\beta && M,}\end{equation}
which is also a fiber-wise isomorphism. Here
$$\A^k(\G):=\TT k\G^\alpha\big|_{\iota(M)}$$
with $\tauE k{}:=\tau^k_\G\big|_{\A^k(\G)}:\A^k(\G)\ra\iota(M)\approx M$ has a natural structure of a graded bundle as a subbundle of $\tau^k_\G\big|_{\TT k\G^\alpha}:\TT k\G^\alpha\ra\G$. Note that $\tau^k_{k-1,\G}\big|_{\TT k\G^\alpha}:\TT k\G^\alpha\ra\TT{k-1}\G^\alpha$ restricts to the map of graded bundles $\tauE k{k-1}:\A^k(\G)\ra \A^{k-1}(\G)$.
Combining this with higher reduction maps we get the following tower of commutative diagrams:
\begin{equation}\label{eqn:red_tower}
\xymatrix{
\TT k\G^\alpha\ar[d]^{\RR^k}\ar[rr]^{\tau^k_{k-1,\G}\big|_{\TT k\G^\alpha}} &&\TT{k-1}\G^\alpha\ar[d]^{\RR^{k-1}}\\
\A^k(\G)\ar[rr]^{\tauE k{k-1}}&&\A^{k-1}(\G).}
\end{equation}

\begin{prop}\label{prop:natural_inclusions_of_Ak}
Let $k\geq 2$. The bundles $\A^k(\G)$ are subject to natural inclusions
\begin{equation}\label{eqn:inclusion_Ak_TAk1}
\iota^{1,k-1}: \A^k(\G)\hookrightarrow\T\A^{k-1}(\G), \quad
\iota^{1, k-1}(\jet k_0\gamma) := \jet 1_{t=0} \jet {k-1}_{s=0} \gamma(t+s)\gamma(t)^{-1},\, \text{and}
\end{equation}
\begin{equation}\label{eqn:inclusion_Ak_Tk1A}
\iota^{k-1,1}: \A^k(\G)\hookrightarrow\TT{k-1}\A(\G), \quad
\iota^{k-1,1}(\jet k_0\gamma) := \jet {k-1}_{t=0} \jet 1_{s=0} \gamma(t+s)\gamma(t)^{-1},
\end{equation}
where $\gamma(t)\in\G^\alpha$ represents an element of $\A^k(\G)\subset \TT k \G^\alpha$. Moreover,
the following diagram
\begin{equation}\label{eqn:inclusions_of_AkG}
\xymatrix{
\A^k(\G) \ar[d]_{\iota^{k-1,1}} \ar[rr]^{\iota^{1, k-1}} && \T \A^{k-1}(\G) \ar[d]^{\T \iota^{k-2,1}} \\
\TT {k-1}\A(\G) \ar[rr]^{\iota^{1, k-2}_{\A(G)}} && \T \TT {k-2}\A(\G)
}
\end{equation}
is commutative.
\end{prop}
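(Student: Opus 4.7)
The plan is to handle the three parts of the claim in order: well-definedness of the maps, their injectivity (so they are indeed inclusions), and commutativity of diagram \eqref{eqn:inclusions_of_AkG}.

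For well-definedness, note that since $\jet k_0\gamma \in \A^k(\G)$ means $\gamma$ lies in a single $\alpha$-fiber and $\gamma(0)\in\iota(M)$, every product $\gamma(t+s)\gamma(t)^{-1}$ is defined. For fixed $t$, the curve $c_t(s) := \gamma(t+s)\gamma(t)^{-1}$ stays in the $\alpha$-fiber $\G_{\beta(\gamma(t))}$ and satisfies $c_t(0) = \iota(\beta(\gamma(t)))$. Hence $\jet{k-1}_{s=0}c_t \in \A^{k-1}(\G)$, whose $t$-derivative at $t=0$ lies in $\T\A^{k-1}(\G)$; analogously, $\jet 1_{s=0}c_t \in \A(\G)$ gives, upon taking the $(k-1)$-jet in $t$, an element of $\TT{k-1}\A(\G)$. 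Smoothness and dependence only on the $k$-jet $\jet k_0\gamma$ both follow once one observes the factorizations
$$\iota^{1,k-1} = \T\RR^{k-1}\circ\iota^{1,k-1}_\G\big|_{\A^k(\G)}, \qquad \iota^{k-1,1} = \TT{k-1}\RR^1\circ\iota^{k-1,1}_\G\big|_{\A^k(\G)},$$
where $\iota^{\bullet,\bullet}_\G$ are the standard inclusions applied to the manifold $\G$. Injectivity can be verified in local coordinates compatible with a right-trivialization of $\G$ over a neighborhood of $\iota(M)$: in such coordinates the maps read exactly as the standard inclusions $\iota^{\bullet,\bullet}_M$ along the $\alpha$-foliation direction, and so are injective by the injectivity of the latter.

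The main content is the commutativity of \eqref{eqn:inclusions_of_AkG}, which rests on the groupoid cocycle identity
$$c_t(s+r)\,c_t(s)^{-1} = \gamma(t+s+r)\gamma(t)^{-1}\gamma(t)\gamma(t+s)^{-1} = \gamma(t+s+r)\gamma(t+s)^{-1}.$$
Tracing $\jet k_0\gamma$ along the top-right path of \eqref{eqn:inclusions_of_AkG}, and using this identity to rewrite the inner application of $\iota^{k-2,1}$ to the $\A^{k-1}(\G)$-valued curve $t \mapsto \jet{k-1}_{s=0}c_t$, yields
$$\T\iota^{k-2,1}\bigl(\iota^{1,k-1}(\jet k_0\gamma)\bigr) = \jet 1_{t=0}\jet{k-2}_{s=0}\jet 1_{r=0}\gamma(t+s+r)\gamma(t+s)^{-1},$$
while tracing along the left-bottom path gives
$$\iota^{1,k-2}_{\A(\G)}\bigl(\iota^{k-1,1}(\jet k_0\gamma)\bigr) = \jet 1_{u=0}\jet{k-2}_{t=0}\jet 1_{s=0}\gamma(u+t+s)\gamma(u+t)^{-1}.$$
After the obvious relabeling of jet variables $(u,t,s) \leftrightarrow (t,s,r)$ these two expressions coincide, proving the diagram commutes. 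The main conceptual obstacle is bookkeeping the three orders of differentiation while simultaneously keeping track of the groupoid multiplications; once the cocycle identity is isolated, the rest of the calculation is mechanical.
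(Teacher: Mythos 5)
Your treatment of well-definedness (via the factorizations $\iota^{1,k-1}=\T\RR^{k-1}\circ\iota^{1,k-1}_\G\big|_{\A^k(\G)}$ and $\iota^{k-1,1}=\TT{k-1}\RR^1\circ\iota^{k-1,1}_\G\big|_{\A^k(\G)}$) coincides with the paper's presentation, and your cocycle computation $c_t(s+r)c_t(s)^{-1}=\gamma(t+s+r)\gamma(t+s)^{-1}$ correctly carries out the ``calculation on representatives'' that the paper only mentions for the commutativity of \eqref{eqn:inclusions_of_AkG}. The gap is in the injectivity step, which is the real content of the proposition. You dispose of it by claiming a ``right-trivialization of $\G$ over a neighborhood of $\iota(M)$'' in which $\iota^{1,k-1}$ and $\iota^{k-1,1}$ ``read exactly as the standard inclusions $\iota^{\bullet,\bullet}_M$''. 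No such coordinates exist in general: the maps are built from the right translations $R_{\gamma(t)^{-1}}$, and a chart turning all right translations near the units into the identity would force the isotropy to be locally abelian. Already for a Lie group in exponential coordinates, writing $\gamma(t)=\exp(x(t))$ with $x(0)=0$, one has $\T R_{\gamma(t)^{-1}}\dot\gamma(t)=\dot x(t)-\tfrac12[x(t),\dot x(t)]+\cdots$, whose second $t$-derivative at $0$ is $x^{(3)}(0)-\tfrac12[\dot x(0),\ddot x(0)]$; so from order $3$ on the maps are \emph{not} the standard inclusions in coordinates, only ``triangular'' perturbations of them, and that weaker statement would itself require proof. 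Note that even in the group case the analogous injectivity (of $\phi^k_{G,e}$ in Proposition \ref{prop:Decomposition_of_TkG}) is proved in the paper by a nontrivial induction, which signals that it cannot be waved away.

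The paper's argument, which you would need to supply in some form, is an induction on $k$ exploiting that $\RR^{k-1}$ is a fiberwise isomorphism, hence $\T\RR^{k-1}$ is fiberwise injective over $\T\beta:\T\G\to\T M$: if $v_1^k\neq v_2^k$ had equal images under $\iota^{1,k-1}$, fiberwise injectivity forces their projections to $\T\G$ to differ, hence (since these coincide with the $\T\G$-projections of $v_j^{k-1}=\tau^k_{k-1}(v_j^k)$) $v_1^{k-1}\neq v_2^{k-1}$; then the commuting square $\T\tau^{k-1}_{k-2}\circ\iota^{1,k-1}=\iota^{1,k-2}\circ\tau^k_{k-1}$ together with the inductive injectivity of $\iota^{1,k-2}$ contradicts the assumed equality of images. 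An analogous argument (or your triangularity idea, carefully justified) is needed for $\iota^{k-1,1}$ as well; as it stands, your proposal does not establish that either map is injective.
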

\begin{proof}
It is easy to check that the map $\iota^{1, k-1}$ can be presented as a composition of the following graded bundle morphisms:
$$
\xymatrix{
 \A^k(\G) \ar@{^{(}->}[r]\ar[d]& \TT k\G^\alpha \ar@{^{(}->}[r]\ar[d]& \T\TT{k-1}\G^\alpha \ar@{^{(}->}[r]\ar[d] &\T (\TT{k-1}\G^\alpha) \ar[rr]^{\T\RR^{k-1}}\ar[d] && \T\A^{k-1}(\G). \ar[d] \\
M&\G&\G &\G&&M
}
$$
Let us take $k$-velocities $v^k_j \in \A^k(\G)$, $j=1,2$, $v_1^k\neq v_2^k$, and assume the contrary, i.e., that $\iota^{1, k-1}(v_1^k)=\iota^{1, k-1}(v^k_2)$. The crucial observation is that since $\RR^{k-1}: \TT {k-1}\G^\alpha\ra \A^{k-1}$ is a fiber-wise isomorphism, the same holds for  $\T\RR^{k-1}: \T (\TT {k-1}\G^\alpha)\ra \T\A^{k-1}$ over $\T\beta: \T \G \ra \T M$. Therefore, $\und{v}_1^k\neq \und{v}_2^k$ where $\und{v}_j^k$ denotes the image of $v_j^k$ in $\T \G$.
But the images of $v_j^k$ and $v_j^{k-1}$ in $\T \G$ coincides, where $v_j^{k-1} = \tau^k_{k-1}(v_j^k)$, since
both are equal $\jet 1_0\gamma_j$ if $v_j^k=\jet k_0 \gamma_j$. Therefore,  $v_1^{k-1}\neq v_2^{k-1}$.
On the other hand, the following diagram
\begin{equation}\label{eqn:diag_iota1k1}
\xymatrix{
\A^k(\G)\ar[rr]^{\iota^{1, k-1}} \ar[d]_{\tau^k_{k-1}} && \T\A^{k-1}(\G)\ar[d]^{\T\tau^{k-1}_{k-2}} \\
\A^{k-1}(\G)\ar[rr]^{\iota^{1, k-2}}  && \T\A^{k-2}(\G)
}
\end{equation}
is commutative. Here we assume that $k\geq 3$. By proceeding by induction on $k$ we may assume that  $\iota^{1, k-2}$ is injective. This leads to a contradiction, since then $\iota^{1, k-2}(v_1^{k-1})\neq \iota^{1, k-2}(v_2^{k-1})$  but they should be the images of the projection $\T\tau^{k-1}_{k-2}$ of the same element $\iota^{1, k-1}(v_1^k)=\iota^{1, k-1}(v^k_2)$. For $k=2$, we replace the diagram \eqref{eqn:diag_iota1k1} with
$$
\xymatrix{
\A^2(\G)\ar[rr]^{\iota^{1, 1}} \ar[d] && \T\A(\G)\ar[d] \\
\A(\G)\ar[rr]^{\id_{\A(\G)}}  && \A(\G)
}
$$
and follow the same reasoning.

The proof of the injectivity of $\iota^{k-1,1}$ is very similar and is left to a reader. It is based on a presentation of
$\iota^{k-1,1}$ as a composition of graded bundle morphisms
$$
\A^k(\G) \subset \TT k\G^\alpha \subset \TT{k-1}\T\G^\alpha \subset \TT{k-1} (\T\G^\alpha) \xrightarrow{\TT{k-1}\RR^{1}} \TT{k-1}\A(\G).
$$

The fact that \eqref{eqn:inclusions_of_AkG} is commutative follows directly from a calculation on the representatives.
\end{proof}

The commutativity of \eqref{eqn:inclusions_of_AkG} allows us to write
\begin{equation}\label{eqn:A_k+1}
\A^{k}(\G) \subseteq \T\A^{k-1}(\G)\cap\TT {k-1}\A(\G).
\end{equation}
as subsets of $\T\TT {k-2}\A(\G)$.
Later, in Theorem~\ref{thm:A_E}, we shall prove that, for $k\geq 3$, in \eqref{eqn:A_k+1} actually the equality holds.

\begin{rem}\label{rem:red_weil}
It is worth mentioning that the construction of graded bundles $\A^k(\G)$ and related reduction maps $\RR^k$ can be generalized in the frame of the theory of Weil functors. Namely, for a Weil functor $F=\T^A$ one can consider maps $R^F_g: T^A_g\G^{\alpha}\to\T^A_{\beta(g)}\G^\alpha$ given by
$\jet A(\gamma) \mapsto \jet A(\gamma\cdot g^{-1})$ for any representative  $\gamma:\R^k \to \G^\alpha$ of an element $\jet A(\gamma)\in\TT A\G^\alpha$ such that $\gamma(0)=g\in\G$.
Maps $R^F_g$ considered point-wise for all $g\in\G$ give rise to a  map of (multi) graded bundles
$$\xymatrix{
F\G^\alpha\ar[rr]^{\RR^F}\ar[d]^{\tau^F_\G\big|_{\TT F\G^\alpha}} && **[r] F\G^\alpha\big|_{\iota(M)}:=\A^F(\G)\ar[d]^{\tau^F}\\
\G\ar[rr]^\beta && M,}$$
which is also a fiber-wise isomorphism. In particular, $\A^{\TT k}(\G)=\A^k(\G)$ and $\RR^{\TT k} = \RR^k$.
\end{rem}


\paragraph{Higher algebroids of a Lie groupoid.} It turns out that bundles $\A^k(\G)$ are naturally equipped with an additional geometric structure.

\begin{thm}\label{thm:structure_Ak}
The reduction maps $\RR^k$ give rise to a sequence of ZMs $\kappa_k$ over $\rho_k$
$$\xymatrix{
\TT k\A(\G)\ar@{-|>}[rr]^{\kappa_k}\ar[d]^{\TT k\tau}&&\T\A^k(\G)\ar[d]^{\tau_{\A^k(\G)}}\\
\TT k M&&\A^k(\G)\ar[ll]_{\rho_k},}$$
where relations $\kappa_k$ are defined inductively by:
\begin{itemize}
\item $\kappa_1:=\kappa$ is the relation constituting the Lie algebroid structure on $\A(\G)$;
\item $\kappa_{k+1}:=\left(\kappa_{\A^k(\G)}\circ \T\kappa_k \right)\cap\left(\TT {k+1}\A(\G)\times\T\A^{k+1}(\G)\right)$ is the restriction of relation $\wt\kappa_{k+1}:=\kappa_{\A^k(\G)}\circ \T\kappa_k$ to a subset $\TT{k+1}\A(\G)\times\T\A^{k+1}(\G)\subset \T\TT k\A(\G)\times\T\T\A^k(\G)$:
\begin{equation}\label{eqn:kappa_k_A_1}
\xymatrix{
\T\TT k\A(\G)\ar@{-|>}[rr]^{\T\kappa_{k}} &&\T\T\A^k(\G)\ar[rr]^{\kappa_{\A^k(\G)}} && \T\T\A^k(\G)\\
\TT{k+1}\A(\G)\ar@{--|>}[rrrr]^{\kappa_{k+1}}\ar@{_{(}->}[u] &&&&\T\A^{k+1}(\G),\ar@{_{(}->}[u]}
\end{equation}
 \end{itemize}
and maps $\rho_k:\A^k(\G)\ra\TT kM$ are given by
\begin{equation}\label{eqn:rho_k_A}
\xymatrix{
\TT k\G^\alpha\ar[d]^{\RR^k}\ar[rr]^{\TT k\beta\big|_{\TT k\G^\alpha}}&&\TT k M\\
\A^k(\G)\ar@{-->}[rr]^{\rho_k}&&\TT k M.\ar@{=}[u]}
\end{equation}

Alternatively, $\kappa_k$ can be defined as the reduction of the canonical flip $\kappa_{k,\G}$:
\begin{equation}\label{eqn:kappa_k_A}
\xymatrix{
\TT k\T\G^\alpha\ar[rr]^{\kappa_{k,\G}\big|_{\TT k\T\G^\alpha}}\ar[d]^{\TT k\RR^1\big|_{\TT k\T\G^\alpha}}&&\T\TT k\G^\alpha\ar[d]^{\T\RR^k\big|_{\T\TT k\G^\alpha}}\\
\TT k \A(\G)\ar@{--|>}[rr]^{\kappa_k}&&\T\A^k(\G).}
\end{equation}
The dashed line in this diagram should be understood in the following sense: two elements in $\TT k\A(\G)$ and $\T\A^k(\G)$ are $\kappa_k$-related if and only if they are projections of some $\kappa_{k,\G}$-related elements in $\TT k\T\G^\alpha$ and $\T\TT k\G^\alpha$.

What is more, relations $\kappa_k$ and $\kappa_{k-1}$ are related by
\begin{equation}\label{eqn:kappa_k_A_tower}
\xymatrix{
\TT k\A(\G)\ar@{-|>}[rr]^{\kappa_k}\ar[d]^{\tau^k_{k-1,\A(\G)}}&&\T\A^k(\G)\ar[d]^{\T\tauE k{k-1}}\\
\TT{k-1}\A(\G)\ar@{-|>}[rr]^{\kappa_{k-1}}&&\T\A^{k-1}(\G)
,}\end{equation}
whereas morphisms $\rho_k$ map the affine tower  $\tau^k_{k-1}$ into the tower of higher vector bundles:
\begin{equation}\label{eqn:rho_k_A_tower}
\xymatrix{\A^k(\G)\ar[d]^{\tauE k{k-1}}\ar[rr]^{\rho_k}&&\TT kM\ar[d]^{\tauM k{k-1}}\\
 \A^{k-1}(\G)\ar[rr]^{\rho_{k-1}}&& \TT{k-1}M
.}\end{equation}
\end{thm}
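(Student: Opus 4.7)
The plan is to take the alternative characterisation \eqref{eqn:kappa_k_A} via reduction of the canonical flip $\kappa_{k,\G}$ as the working definition of $\kappa_k$, and then derive all remaining claims from it. The preliminary step is to verify that $\kappa_{k,\G}$, being a natural transformation of bundle functors applied to the submersion $\alpha$, restricts to a well-defined map $\kappa_{k,\G}\colon \TT k \T\G^\alpha \to \T\TT k \G^\alpha$: an element of $\TT k\T\G^\alpha$ is represented by $\jet k_0\jet 1_0 \gamma(t,s)$ with $\gamma(\cdot, s)$ lying in a single $\alpha$-leaf for each $s$, and its image $\jet 1_0\jet k_0 \gamma(t,s)$ lies in $\T\TT k\G^\alpha$ since the $k$-th velocity of $\gamma(\cdot, s)$ belongs to $\TT k\G^\alpha$. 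Formula \eqref{eqn:rho_k_A} for $\rho_k$ is well-posed by an analogous remark: $\TT k\beta$ is constant on the right $\G$-orbits in $\TT k\G^\alpha$ (since $\beta(hg)=\beta(h)$), hence it factors uniquely through $\RR^k$.

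Next I would show that \eqref{eqn:kappa_k_A} indeed defines a relation with source in $\TT k\A(\G)$ and target in $\T\A^k(\G)$. This amounts to checking that $\kappa_{k,\G}$ is equivariant with respect to the (partial) right $\G$-actions $\TT k\T R_g$ and $\T\TT k R_g$, which is naturality of the canonical flip applied to $R_g$. Consequently, if two lifts of a fixed $v\in\TT k\A(\G)$ in $\TT k\T\G^\alpha$ differ by a right translation, their $\kappa_{k,\G}$-images differ by the corresponding translation and hence project to the same element of $\T\A^k(\G)$. The ZM property of $\kappa_k$ over $\rho_k$ is then verified by dualisation: the dual $\eps_k\colon\TT\ast\A^k(\G)\to \TT k \A^\ast(\G)$ turns out to be a genuine vector bundle morphism over $\rho_k$, as follows for $k=1$ from the definition of $\kappa=\kappa_1$ recalled in Subsection~\ref{ssec:algebroids}, and inductively for $k+1$ from the reduction description combined with the inductive identity below.

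For the inductive formula \eqref{eqn:kappa_k_A_1}, I would exploit the recursion \eqref{eqn:kappa_kM} for $\kappa_{k+1,\G}$, namely
\[
\kappa_{k+1,\G} \;=\; \T\kappa_{k,\G}\circ \kappa_{\TT k\G}\big|_{\TT{k+1}\T\G}.
\]
Applying $\T\RR^{k+1}$ on the target side and $\TT{k+1}\RR^1$ on the source side, together with functoriality of $\T$ and $\TT{k+1}$ and the induction hypothesis for $\kappa_k$, yields the composition $\kappa_{\A^k(\G)}\circ\T\kappa_k$ on the reduced level. The intersection with $\TT{k+1}\A(\G)\times\T\A^{k+1}(\G)$ inside $\T\TT k\A(\G)\times\T\T\A^k(\G)$ is then forced by Propositions~\ref{prop:G_alpha} and \ref{prop:natural_inclusions_of_Ak}, which characterise the respective sources and targets as fibred intersections. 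The tower compatibilities \eqref{eqn:kappa_k_A_tower} and \eqref{eqn:rho_k_A_tower} follow by applying the projections $\tauM k{k-1}$ and $\TT k\beta$ to the defining reduction diagrams and invoking naturality of the canonical flip and the target map.

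The main obstacle I anticipate is the passage from the recursion for $\kappa_{k+1,\G}$ at the level of $\G$ to the corresponding identity at the reduced level. Besides commuting the reduction with $\T$ and with the iterated canonical flip, one must show that the restriction to $\TT{k+1}\A(\G)\times\T\A^{k+1}(\G)$ matches precisely the reductions of $\TT{k+1}\T\G^\alpha$ and $\T\TT{k+1}\G^\alpha$ under $\TT{k+1}\RR^1$ and $\T\RR^{k+1}$ respectively. This set-theoretic coincidence is delicate because the inclusions \eqref{eqn:inclusion_Ak_TAk1} and \eqref{eqn:inclusion_Ak_Tk1A} are \emph{a priori} proper, and one must invoke Proposition~\ref{prop:G_alpha} together with its consequence \eqref{eqn:A_k+1} to see that intersection and reduction commute in the required sense.
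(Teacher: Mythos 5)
Your overall route is the same as the paper's (adopt the reduction \eqref{eqn:kappa_k_A} of the canonical flip as the working definition, then derive the inductive formula \eqref{eqn:kappa_k_A_1} and the tower compatibilities), but two essential steps are missing or wrongly argued. First, your justification of the ZM property does not work as stated. Two lifts in $\TT k\T\G^\alpha$ of one and the same element $v\in\TT k\A(\G)$ do \emph{not} differ by a right translation by a fixed $g\in\G$ --- they differ by the action of an element of $\TT k\G$ --- and their $\kappa_{k,\G}$-images need not project to the same point of $\T\A^k(\G)$: $\kappa_k$ is genuinely a relation, $v$ having one image in each fibre $\T_{a^k}\A^k(\G)$ over every suitable $a^k$. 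What has to be proved is that, after \emph{fixing the target base point} $a^k$, the relation restricts to a single linear map on fibres. The paper does this by writing the graph of $\kappa_{k,a^k}$ as the union over $\wt a^k\in(\RR^k)^{-1}(a^k)$ of the graphs of the linear maps $f_{\wt a^k}$ (the composition of the inverse of the fibrewise isomorphism $\TT k\RR^1\big|_{\TT k\T\G^\alpha}$ --- a fact which itself needs a dimension count --- with $\kappa_{k,\G}$ and $\T\RR^k$), and then showing $f_{\wt a^k\cdot g}=f_{\wt a^k}$ by equivariance under constant right translations, which is exactly what parametrizes $(\RR^k)^{-1}(a^k)$. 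Your alternative argument via dualisation is circular: the dual of a relation between vector bundles is an honest vector bundle morphism precisely when the relation is a ZM (Theorem \ref{thm:Z}), so it cannot be used to establish that property.

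Second, once \eqref{eqn:kappa_k_A} is taken as the definition, the first bullet of the theorem --- that at level $k=1$ the reduction of $\kappa_\G$ coincides with the algebroid relation $\kappa$ on $\A(\G)$ --- becomes a claim that must be proved, and you treat it as automatic. This is the nontrivial base case of the equivalence (diagram \eqref{eqn:kappa_red_group} in the paper's proof); it requires Proposition \ref{prop:kappa_bracket}, extending the given vectors to sections and lifting them to right-invariant vector fields, and the fact that $\RR^1$ intertwines the bracket of right-invariant vector fields with the algebroid bracket. Nothing recalled in Subsection \ref{ssec:algebroids} yields it for free. Finally, you correctly flag the matching of the restriction in \eqref{eqn:kappa_k_A_1} with the reduction \eqref{eqn:kappa_k_A} as the delicate point, but you do not carry it out, and the tools you invoke are insufficient: at this stage \eqref{eqn:A_k+1} is only an inclusion (equality is established only later, in Theorem \ref{thm:A_E}), so Propositions \ref{prop:G_alpha} and \ref{prop:natural_inclusions_of_Ak} alone do not force the coincidence. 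The paper instead checks it directly on fibres, using the recursion \eqref{eqn:kappa_kM}, the fact that $\T(\TT k\RR^1)$ restricts to $\TT{k+1}\RR^1$ on $\TT{k+1}\T\G^\alpha$, and the identity $\T\T\RR^k\big|_{\T\TT{k+1}\G^\alpha}=\T\RR^{k+1}$, together with the already established linearity of the fibre maps.
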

\begin{proof} To check \eqref{eqn:rho_k_A} observe that the action $R_{g}$ leaves map $\beta$ invariant. It follows that $\TT k\beta$ is invariant under $\TT kR_g$, and hence $\TT k\beta\big|_{\TT k\G^\alpha}$ induces,under $\RR^k$, a well-defined morphism $\rho_k$. The tower \eqref{eqn:rho_k_A_tower} is obtained by projecting an obvious tower
$$\xymatrix{
\TT k\G^\alpha\ar[rr]^{\tau^k_{k-1,\G}\big|_{\TT k\G^\alpha}}\ar[d]_{\TT k\beta\big|_{\TT k\G^\alpha}}&&\TT{k-1}\G^\alpha \ar[d]^{\TT{k-1}\beta\big|_{\TT{k-1}\G^\alpha}}\\
\TT kM\ar[rr]^{\tauM k{k-1}}&&\TT{k-1}M}$$
by means of \eqref{eqn:red_tower} and \eqref{eqn:rho_k_A}.\bigskip

Next we shall show that $\kappa_k$ is a well-defined ZM for each $k$. Our strategy will be the following. First we shall show the relation $\kappa_k$ defined by \eqref{eqn:kappa_k_A} is a ZM. After that we will prove that both definitions \eqref{eqn:kappa_k_A_1} and \eqref{eqn:kappa_k_A} define the same object $\kappa_k$ considered as a relation.
\smallskip

To check that relation $\kappa_{k}$ defined by \eqref{eqn:kappa_k_A} is a ZM let us choose $a^k\in \A^k(\G)$ and denote $v^k=\rho_k(a^k) =\TT k\beta(a^k)\in\TT kM$. We shall show that (cf. Appendix \ref{app:ZM})
$$\kappa_{k,a^k}:\TT k\A(\G)_{v^k}\relto\T_{a^k}\A^k(\G)$$
(i.e., the restriction of $\kappa_k$ to the fibers of $\TT k\A(\G)$ and $\T\A^k(\G)$
over $v^k$ and $a^k$, respectively) is a linear map.

Let us consider the following commutative diagram obtained by applying functors $\TT k$ and $\T$ to diagrams \eqref{eqn:A_1} and \eqref{eqn:A_k}, respectively, and then taking the appropriate restrictions of morphism to $\TT k\T\G^\alpha\subset\TT k(\T\G^\alpha)$ and $\T\TT k\G^\alpha\subset\T(\TT k\G^\alpha)$  (for simplicity of the presentation we do not write the restriction of morphism):
$$\xymatrix{
&\TT k\T\G^\alpha\ar[rrr]^{\kappa_{k,\G}}\ar[dl]^{\TT k\RR^1}\ar[ddr]^<<<<<<<<<{\TT k\tau_\G}&&&\T\TT k\G^\alpha\ar[dl]^{\T\RR^k}\ar[ddr]^{\tau_{\TT k\G^\alpha}}\\
\TT k\A(G)\ar@{--|>}[rrr]^>>>>>>>>>>>>>>>>>>>>{\kappa_k}\ar[ddr]^{\TT k\tau} &&&\T\A^k(G)\ar[ddr]^<<<<<<<<<<<<{\tau_{\A^k(\G)}}\\
&&**[l] \wt a^k\in\TT k\G^\alpha\ar[dl]^{\TT k\beta}\ar@{=}[rrr]&&& **[r]\TT k\G^\alpha\ni\wt a^k\ar[dl]^<<<<<<<<<{\RR^k}\\
&**[l]v^k\in\TT kM &&&**[r] \A^k(\G)\ni a^k.\ar[lll]_{\rho_k}}$$
The left-hand side of the above diagram, which is the $\thh{k}$ tangent lift of \eqref{eqn:A_1}:
$$\xymatrix{
\TT k\T\G^\alpha \ar[rr]^{\TT k\RR^1|_{\TT k\T\G^\alpha}} \ar[d]^{\TT k\tau_{\G}} &&\TT k\A(\G) \ar[d]^{\TT k\tau} \\
\TT k\G^\alpha \ar[rr]^{\TT k\beta} && \TT kM
}$$
is a fiber-wise vector bundle isomorphism. Indeed, since $\RR^1:\T\G^\alpha\to \A(\G)$ is a fiber-wise isomorphism, so is
$\TT k\RR^1: \TT k(\T\G^\alpha) \ra \TT k\A(\G)$ (over $\TT k\beta: \TT k\G \ra \TT kM$). Therefore, $\TT k\RR^1|_{\TT k\T\G^\alpha}$ is at least
fiber-wise injective, but an easy calculation of dimensions of fibers shows that actually it is a fiber-wise vector bundle isomorphism.

Let $\wt a^k\in\TT k\G^\alpha$ be such that $\RR^k(\wt a^k)=a^k$. Then $\TT k\beta(\wt a^k)=v^k$. Denote by $(\TT k\RR^1)_{\wt a^k}$ the restriction of
$\TT k\RR^1$ to the fibers of $\TT k\T\G^\alpha$ and $\TT k\A(\G)$ over $\wt a^k$ and $v^k$, respectively. Obviously, since $\kappa_k$ is defined by \eqref{eqn:kappa_k_A},
$$
\Graph\kappa_{k,a^k} = \bigcup_{\wt a^k\in(\RR^k)^{-1}(a^k)} \Graph f_{\wt a^k},$$
where $f_{\wt a^k}$ is the following composition of linear maps
\begin{equation}\label{eqn:comp_linear}
\xymatrix{
f_{\wt a^k} : \TT k\A(\G)_{v^k} \ar[rr]^{\left((\TT k\RR^1)_{\wt{a}^k}\right)^{-1}} && (\TT k\T\G^\alpha)_{\wt a^k} \ar[rr]^{\kappa_{k, \G,\wt a^k}} &&
(\T\TT k\G^\alpha)_{\wt a^k} \ar[r]^{\T\RR^k} & \T_{a^k}\A^k(\G)
}.\end{equation}
We will show that $f_{\wt a^k}$ does not depend on the choice of  $\wt a^k$, and hence that $\kappa_{k,a^k}$ is a linear map. To see this, observe that $\wt a^k$ is of the form $\wt a^k = \jet k_0 (\gamma(t) \cdot g)$, where $\gamma(t)$ represents a jet $a^k=\jet k_0(\gamma(t))$ for some $g\in \G$ with $\beta(g)=\gamma(0)$.
We will write shortly $\wt a^k = a^k \cdot g$. The following commutative diagram  (the commutativity can be easily checked at the level of representatives):
$$
\xymatrix{
(\TT k\A(\G))_{v^k} & (\TT k\T\G^\alpha)_{a^k} \ar[l]_{\TT k \RR^1} \ar[r]^{\kappa_{k, \G}} & (\T\TT k\G^\alpha)_{a^k} \ar[r]^{\T\RR^k} & \T_{a^k}\A^k(\G) \\
(\TT k\A(\G))_{v^k} \ar[u]^{=} & (\TT k\T\G^\alpha)_{a^k\cdot g} \ar[l]_{\TT k \RR^1} \ar[r]^{\kappa_{k, \G}} \ar[u]^{R_g^{\TT k\T}} &
(\T\TT k\G^\alpha)_{a^k\cdot g} \ar[r]^{\T\RR^k} \ar[u]^{R_g^{\T\TT k}} & \T_{a^k}\A^k(\G) \ar[u]^{=}
}$$
shows that $f_{a^k\cdot g} =f_{a^k}$, hence $\kappa_{k,a^k}$ is a linear map and, consequently, $\kappa_k$ is a ZM.
\bigskip

Now we will prove the equivalence of definitions \eqref{eqn:kappa_k_A_1} and \eqref{eqn:kappa_k_A}. Our reasoning will be inductive. For $k=1$ we need to show that two vectors in $\T\A(\G)$ are $\kappa$-related if and only if they are projections of some $\kappa_\G$-related elements in $\T\T\G^\alpha$ (cf. Definition \ref{def:def_rel}):
\begin{equation}\label{eqn:kappa_red_group}\xymatrix{
\T\T\G^\alpha \ar[d]^{\T\RR^1\big|_{\T\T\G^\alpha}}\ar[rr]^{\kappa_{\G}\big|_{\T\T\G^\alpha}}&&\T\T\G^\alpha \ar[d]^{\T\RR^1\big|_{\T\T\G^\alpha}}\\
\T\A(\G)\ar@{--|>}[rr]^{\kappa}&&\T\A(\G)
.}\end{equation}

For the ''only if'' part, take any $\kappa$-related vectors $A\in\T_a\A(\G)$ and $B\in\T_b\A(\G)$ at $x\in M$. Then $\T\tau(A)=\rho(b)$ and $\T\tau(B)=\rho(a)$, hence we can extend $a$ and $b$ to local sections of $\A(\G)$ in such a way that $A=\T a(\rho(b))$ and $B=\T b(\rho(a))$. By Proposition \ref{prop:kappa_bracket} the algebroid bracket of sections $a$ and $b$ vanishes at $x$:
$$[a,b](x)=0.$$
Since $\A(\G)$ is an integrable algebroid, local sections $a$ and $b$ can be lifted to local right-invariant vector fields $\wt a,\wt b\in \Sec(\T\G^\alpha)$ around some $g\in\beta^{-1}(x)$ such that
$$a=\RR^1(\wt a),\qquad b=\RR^1(\wt b)\qquad\text{and}\qquad [a,b]=\RR^1([\wt a,\wt b]).$$
Since $\RR^1$ is a fiber-wise isomorphism $[\wt a,\wt b](g)=0$. Consequently vectors $\wt A:=\T\wt a(\wt b(g))$ and $\T\wt b(\wt a(g))$ are $\kappa_\G$-related (cf. \cite{Kolar_Michor_Slovak_nat_oper_diff_geom_1993} for a geometric interpretation of the Lie bracket of vector fields or Proposition \ref{prop:kappa_bracket} in a more general algebroidal setting). Clearly $A=\T\RR^1(\wt A)$ and $B=\T\RR^1(\wt B)$.

The ''if'' part is easier. We take any $\kappa_\G$-related vectors $\wt A\in\T_{\wt a}\T\G^\alpha$ and $\wt B\in\T_{\wt b}\T\G^\alpha$ at $g\in\G$ and extend $\wt a$ and $\wt b$ to local right-invariant vector fields on $\G$ (commuting at $g$, by Proposition \ref{prop:kappa_bracket}). Since $\RR^1$ is a fiber-wise isomorphism which maps the Lie bracket of right-invariant vector fields on $\G$ to the algebroid bracket on $\A(\G)$, sections $\RR^1(\wt a)$ and $\RR^1(\wt b)$ commute at $\beta(g)$ and hence (again by Proposition \ref{prop:kappa_bracket}) vectors $\T\RR^1(\wt A)$ and $\T\RR^1(\wt B)$ are $\kappa$-related.
\smallskip

Let us now assume that we have proved the equivalence of definitions \eqref{eqn:kappa_k_A_1} and \eqref{eqn:kappa_k_A} for a given $k$.
Consider the relation $\wt\kappa_{k+1}:=\kappa_{\A^k(\G)} \circ \T\kappa_k: \T \T^k \A(\G) \relto \T\T\A^k(\G)$.
It is a ZM with base map $\T\rho_k: \T\A^k(\G)\ra \T\TT kM$ as the composition of a ZM $\T\kappa_k$ (we use Theorem \ref{thm:ZM_lift}) and a vector bundle isomorphism $\kappa_{\A^k(\G)}$. We shall show that $\kappa_{k+1}$ defined by \eqref{eqn:kappa_k_A_1}, i.e, the restriction $\kappa_{k+1} := \wt \kappa_{k+1}\cap\left(\TT{k+1} \A(\G)\times  \T\A^{k+1}(\G)\right):\TT{k+1} \A(\G)\relto\T\A^{k+1}(\G)$ can be described in terms of \eqref{eqn:kappa_k_A}.
To prove this consider any
$a^{k+1}\in\A^{k+1}(\G)$ and $v^{k+1}=\rho_{k+1}(a^{k+1})\in\TT{k+1}M$ and let us study
$$\kappa_{k+1, a^{k+1}} = \wt\kappa_{k+1, a^{k+1}}\cap\left(\TT{k+1}\A(\G) \times \T\A^{k+1}(\G)\right) : \TT{k+1}\A(\G)_{v^{k+1}} \relto \T_{a^{k+1}}\A^{k+1}(\G).$$
Note that $\T\rho_k\big|_{\A^{k+1}(\G)} = \rho_{k+1}: \A^{k+1}(\G) \to \TT{k+1}M$, hence $\wt\kappa_{k+1}$ relates fibers $(\T\TT k\A(\G))_{v^{k+1}}\supset (\TT{k+1}\A(\G))_{v^{k+1}}$ with $\T_{a^{k+1}}\T\A^k(G)\supset\T_{a^{k+1}}\A^{k+1}(\G)$.

By our considerations from page \pageref{eqn:comp_linear},
relation $\kappa_{k,a^k}=f_{\wt{a}^k}$ where $\wt a^k\in\TT k\G^\alpha$ is any element that reduces to $a^k=\RR^k(\wt a^k)$ and  $f_{\wt{a}^k}$ are given by \eqref{eqn:comp_linear}. It follows that $(\T\kappa_k)_{,\delta a^k}= F_{\del \wt a^k}$ where the later is a linear map given by
$$
\xymatrix{
F_{\del \wt a^k}:\T\TT k\A(\G)_{\del v^k} \ar[rr]^{\left(\T(\TT k\RR^1)_{\del\tilde{a}^k}\right)^{-1}} && \T(\TT k\T\G^\alpha)_{\del\tilde a^k} \ar[rr]^{\T\kappa_{k, \G,\del\tilde a^k}} &&
\T(\T\TT k\G^\alpha)_{\del\wt a^k} \ar[r]^{\T\T\RR^k} & \T\T\A^k(\G)_{\del a^k}
},$$
for $\del\wt a^k$ being any element of $\T(\TT k\G^\alpha)$ which reduces to $\del a^k=\T\RR^k(\del \wt a^k)$ and $\del v^k=\T\rho_k(\del a^k)$.

The above formula is another way of expressing the fact that relation $\T\kappa_k$ is defined by the following diagram (cf. Proposition \ref{prop:def_rel} and the inductive assumption \eqref{eqn:kappa_k_A}$_k$):
$$\xymatrix{
\T\left(\TT k\T\G^\alpha\right)\ar[rr]^{\T\kappa_{k,\G}}\ar[d]^{\T(\TT k\RR^1)}&&\T\left(\T\TT k\G^\alpha\right)\ar[d]^{\T\T\RR^k}\\
\T\TT k \A(\G)\ar@{--|>}[rr]^{\T\kappa_k}&&\T\T\A^k(\G)
.}$$

Since the diagram below commutes
$$\xymatrix{
\T\T\left(\TT k\G^\alpha\right) \ar[rr]^{\kappa_{\TT k\G^\alpha}}\ar[d]^{\T\T\RR^k} && \T\T\left(\TT k\G^\alpha\right)\ar[d]^{\T\T\RR^k}\\
\T\T\A^k(\G)\ar[rr]^{\kappa_{A^k(\G)}}&&\T\T A^k(\G)
,}$$
relation $\wt\kappa_{k+1,\del a^k}=(\kappa_{\A^k(G)}\circ\T\kappa_k)_{,\del a^k}$ can be expressed as the composition of linear maps
$$
\xymatrix{
\wt F_{\del \wt a^k}:\T\TT k\A(\G)_{\del v^k} \ar[rr]^{\left(\T(\TT k\RR^1)_{\del\tilde{a}^k}\right)^{-1}} && \T(\TT k\T\G^\alpha)_{\del\wt a^k} \ar[rr]^{\T\kappa_{k, \G,\del\tilde a^k}} &&
\T(\T\TT k\G^\alpha)_{\del\wt a^k}  \ar[rr]^{\kappa_{\TT k\G^\alpha}}&&\hdots \\
\hdots \ar[rr]^{\kappa_{\TT k\G^\alpha}}&&\T\T(\TT k\G^\alpha)_{\del\wt a^k}\ar[rr]^{\T\T\RR^k} && \T\T\A^k(\G)_{\del a^k}
,}$$
where $\del\wt a^k$ are as above.

Take now $\del a^k=a^{k+1}\in \A^{k+1}(\G)\subset\T\A^k(\G)$, choose any $\wt a^{k+1}\in \TT{k+1}\G^\alpha\subset\T(\TT k\G^\alpha)$ which reduces to $a^{k+1}=\T\RR^k(\wt a^{k+1})=\RR^{k+1}(\wt a^{k+1})$ and let us study $\kappa_{k+1,a^{k+1}}$, i.e., the restriction of $\wt F_{\wt a^{k+1}}$ to $\TT{k+1}\A(\G)\times \T\A^{k+1}(\G)$.  Note that:
\begin{itemize}
\item Isomorphism  $\T(\T^k\RR^1)_{\wt a^{k+1}}:\T(\TT k\T\G^\alpha)_{\wt a^{k+1}}\ra\T\TT k\A(\G)_{v^{k+1}}$ restricts to an isomorphism (we proved it for $k$ on page \pageref{eqn:comp_linear}) $(\TT{k+1}\RR^1)_{\wt a^{k+1}}:\TT{k+1}\T\G^\alpha_{\wt a^{k+1}}\ra\TT{k+1}\A(\G)_{v^{k+1}}$.
\item By \eqref{eqn:kappa_kM}, $\kappa_{\TT k\G}\circ \T\kappa_{k,\G}$ restricted to  $\TT{k+1}\T\G^\alpha$ is $\kappa_{k+1, \G}: \TT{k+1}\T\G^\alpha \ra  \T\TT{k+1} \G^\alpha$ .
\item $\T\T\RR^k$ restricted to $\T\TT{k+1}\G^\alpha\subset\T(\T\TT k\G^\alpha)$ is $\T\RR^{k+1}$.
\end{itemize}

It follows that ${\kappa}_{k+1, a^{k+1}}$ is the composition of the following linear maps
$$
\xymatrix{
(\TT{k+1}\A(\G))_{v^{k+1}} \ar[rr]^{(\TT{k+1}\RR^1)_{\wt{a}^{k+1}}^{-1}} && (\TT{k+1}\T\G^\alpha)_{\wt{a}^{k+1}}
\ar[r]^{\kappa_{k+1, \G, \wt{a}^{k+1}}} &
(\T\TT{k+1}\G^\alpha)_{\wt{a}^{k+1}} \ar[r]^{\T\RR^{k+1}} & (\T\A^{k+1}(\G))_{a^{k+1}}
}.$$
In other words (cf. page \pageref{eqn:comp_linear}) $\kappa_{k+1}$ is defined by means of diagram \eqref{eqn:kappa_k_A}$_{k+1}$. This finishes the reasoning.
\bigskip


Finally, to prove \eqref{eqn:kappa_k_A_tower} we just need to project an obvious diagram
$$\xymatrix{
\TT k\T\G^\alpha\ar[rrr]^{\kappa_{k,\G}\big|_{\TT k\T\G^\alpha}}\ar[d]_{\tau^k_{k-1,\T\G}\big|_{\TT k\T\G^\alpha}}&&&\T\TT k\G^\alpha\ar[d]^{\T\tau^k_{k-1,\G}\big|_{\T\TT k\G^\alpha}}\\
\TT{k-1}\T\G^\alpha\ar[rrr]^{\kappa_{k-1,\G}\big|_{\TT{k-1}\T\G^\alpha}}&&&\T\TT{k-1}\G^\alpha}$$
by means of \eqref{eqn:red_tower} and \eqref{eqn:kappa_k_A}.
\end{proof}

We can summarize our considerations from this section in the following
\begin{df}\label{def:alg_group} For a Lie groupoid $\G$, bundle $\tau^k:\A^k(\G)\ra M$, together with the relation $\kappa_k:\TT k\A(\G)\relto\T\A^k(\G)$, will be called a \emph{higher} (\emph{$\thh{k}$-order}) \emph{Lie algebroid of a Lie groupoid $\G$}.
\end{df}

Below we discuss natural examples of such objects for $\G$ being an Atiyah groupoid. Further examples, including a Lie group and the action groupoid, are discussed in Subsection \ref{ssec:examples_algebroids}.

\paragraph{Motivating Example \ref{ex:main} revisited -- higher Atiyah algebroids.}
Now we shall interpret pairs  $(\TT kP/G,\kappa_k)$ introduced in Example \ref{ex:main} as higher Lie algebroids of a Lie groupoid in the sense of Definition \ref{def:alg_group}.
Recall the notation used in Example \ref{ex:main}. With the $G$-bundle $p:P\ra M$ one can canonically associate a Lie groupoid  $\Gamma_P = (P\times P)/G$ with base $M$ called an \emph{Atiyah groupoid of $p$}. The source and target maps are defined by $\alpha([(y,x)]) = [x]$, $\beta([(y,x)]) = [y]$ and multiplication is simply $[(z,y)]\cdot[(y,x)]=[(z,x)]$, where $[(y,x)]$ denotes the
$G$-orbit of $(y, x)\in P\times P$ and $[x]$ the $G$-orbit of $x\in P$.

Our goal is to describe higher algebroids $(\A^k(\Gamma_P),\kappa_k)$ associated with $\Gamma_P$. We will call these objects \emph{higher Atiyah algebroids}.

Observe that a curve in a fixed
$\alpha$-fiber $\left(\Gamma_P\right)_{[x]}$ is of the form $[(\gamma(t), x)]$
for some  curve $\gamma(t)\in P$. Curves starting at a base point $[(x,x)]\approx[x]$ satisfy additionally $\gamma(0)=x$. Hence every element of $\A^k(\Gamma_P)=\TT k\Gamma_P^\alpha\big|_M$ is of the form $\jet k[(\gamma(t),x)]$, where $\gamma(t)\in P$ is a curve starting at $x$.
 One easily checks that
the map $\jet k_{t=0}  [(\gamma(t), x)] \mapsto [\jet k\gamma]$  is well-defined, depends on the class $[x]\in M$,
and defines the canonical isomorphism
\begin{equation}\label{e:TPG}
\mathcal{A}^k(\Gamma_P)=\TT k\Gamma_P^\alpha\big|_M \simeq \TT k P/G.
\end{equation}
Hence we may identify  $\A^k(\Gamma_P)\simeq\TT kP/G$.

The anchor map on $\TT kP/G$ is simply $\rho_k:\TT kP/G\ra \TT k M$; $[\jet k\gamma(t)]\mapsto \jet k[\gamma(t)]$. To describe relation $\kappa_k:\TT k\A(\Gamma_P)\relto\T\A^k(\Gamma_P)$ observe first that $\TT k\A(\Gamma_P)\approx \TT k\T P/\TT kG$ and $\T\A^k(\Gamma_P)\approx \T\TT kP/\T G$ where the quotients are taken with respect to natural Lie groups actions $\TT kr_{P,1}:\TT k\T P\times\TT kG\ra\TT k\T P$ and $\T r_{P,k}:\T\TT k P\times\T G\ra\T\TT kP$ (recall that for any Weil algebra $A$, the structure of a group on
$\T^A G$ is obtained by applying the Weil functor $\T^A$ to the structure maps of the group $G$ (see \cite{Kolar_weil_bund_gen_jet_spaces_2008})). We claim that relation $\kappa_k$ is defined by means of the following diagram (cf. Definition \ref{def:def_rel}):
\begin{equation}\label{eqn:kappa_Atiyah}
\xymatrix{
&\TT k\T P \ar[rr]^{\kappa_{k,P}} \ar@{->>}[d]&& \T\TT k P\ar@{->>}[d]\\
\TT k\left(\T P/G\right)\ar@{=}[r]&\TT k\T P/\TT kG \ar@{--|>}[rr]^{\kappa_k} && \T\TT k P/\T G\ar@{=}[r] &\T\left(\TT kP/G\right),
}\end{equation}
where the down-pointing arrows are just natural reduction maps. To see this observe, that for any Weil algebra $A$, the reduction map $\RR^A:\TT A\Gamma_P^\alpha\ra\TT AP/G=\A^{A}(\Gamma_P)$ (cf. Remark \ref{rem:red_weil}) factorizes through $\TT AP$ after extending the domain by $P$ over $M$, i.e.,
$$\xymatrix{
\TT A\Gamma_P^\alpha\times_M P \ar[d]^{\Pi_1}\ar@{->>}[rr]&& \TT AP\ar@{-->}[rr]^{\exists \wt\RR^A\quad} &&\TT AP/G\\
\TT A\Gamma_P^\alpha \ar[rrrru]^{\RR^A\qquad}
},$$
where the above maps on representatives read
$$\xymatrix{
\left(\jet A[(\gamma(t),x)],x'\right) \ar@{|->}[d]^{\Pi_1}\ar@{|->}[rr]&& \jet A\gamma'(t) \ar@{|->}[rr]^{\wt\RR^A\qquad }&&[\jet A\gamma(t)]=[\jet A\gamma'(t)]\\
\jet A[(\gamma(t),x)] \ar@{|->}[rrrru]^{\RR^A\qquad}
},$$
for $x,x'\in P$ lying in the same $G$-orbit (i.e., $[x]=[x']$) and where $\gamma'(t)$ is the unique curve in $P$ such that $[(\gamma(t),x)]=[(\gamma'(t),x')]$ in $\Gamma_P$.

Hence diagram \eqref{eqn:kappa_k_A} defining $\kappa_k$ can be factorized as:
$$\xymatrix{&\TT k\T\Gamma_P^\alpha\times_M P\ar[rr]^{\kappa_{k,\Gamma_P}\times\id_P}\ar@{->>}[d] \ar[dl]^{\Pi_1} &&\T\TT k\Gamma_P^\alpha\times_M P\ar@{->>}[d]\ar[dr]^{\Pi_1}\\
\TT k\T\Gamma_P^\alpha\ar[dr]^{\TT k\RR^1} &\TT k\T P \ar[d]^{\TT k\wt\RR^1} \ar[rr]^{\kappa_{k,P}}&&\T\TT kP\ar[d]^{\T\wt\RR^k}&\T\TT k\Gamma_P^\alpha.\ar[dl]^{\T\RR^k}\\
&\TT k\T P/\TT kG \ar@{--|>}[rr]^{\kappa_{k}}&&\T\TT kP/\T G
}$$
The commutativity of the upper square of the above diagram can be checked directly on representatives.

Thus we have proved the following

\begin{prop}\label{prop:atiyah_alg} The $\thh{k}$-order Lie algebroid of the Atiyah groupoid $\Gamma_P$ is $(\TT kP/G,\kappa_k)$, where $\kappa_k$ is given by \eqref{eqn:kappa_Atiyah}.
\end{prop}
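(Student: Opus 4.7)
The plan is to derive the claim directly from the characterisation of $(\A^k(\G),\kappa_k)$ given in Theorem \ref{thm:structure_Ak}, applied to the Atiyah groupoid $\G=\Gamma_P$. I would carry out the argument in three stages.

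First, I would make the identification $\A^k(\Gamma_P)\simeq \TT kP/G$ rigorous, by noting that the $\alpha$-fibre over $[x]$ in $\Gamma_P$ consists of classes $[(\gamma,x)]$, so that a $k$-velocity in the $\alpha$-direction based at $[x]$ has the form $\jet k_0[(\gamma(t),x)]$ with $\gamma(0)=x$. The assignment $\jet k_0[(\gamma(t),x)]\mapsto [\jet k\gamma]$ is well-defined on classes, independent of the representative $x$ (any change $x\mapsto x\cdot g$ sends $\gamma$ to $\gamma\cdot g$, hence its $k$-jet to $\TT kr_{P,0}$-translate), and it is clearly a smooth graded bundle map, which is a bijection by dimension count. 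Under this identification the target map $\beta$ on $\Gamma_P$ corresponds to the projection $P\to M$, so that $\rho_k=\TT k\beta\big|_{\TT k\Gamma_P^\alpha}$ descends to $[\jet k\gamma]\mapsto \jet k[\gamma]$, matching the anchor from Example \ref{ex:main}.

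Second, I would establish the factorisation of the reduction map through $\TT kP$ used in the preceding paragraph: $\RR^k:\TT k\Gamma_P^\alpha\to\TT kP/G$ extends, after pulling back along the natural projection $\Pi_1:\TT k\Gamma_P^\alpha\times_M P\to\TT k\Gamma_P^\alpha$, to a map $\wt\RR^k$ through $\TT kP$. This is the key technical tool, because it intertwines the reduction performed ``inside'' the groupoid $\Gamma_P$ with the reduction that appears in diagram \eqref{eqn:kappa_Atiyah} -- the latter being performed directly on the principal bundle $P$. A parallel statement is needed one level up, namely for the tangent lifts of $\RR^k$ and $\RR^1$, which follows from naturality of the Weil functor construction in Remark \ref{rem:red_weil} applied to $\T$ and $\TT k$.

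Third, with these ingredients in hand, I would compare the two diagrams. By Theorem \ref{thm:structure_Ak}, relation $\kappa_k$ on $\A^k(\Gamma_P)$ is the reduction of $\kappa_{k,\Gamma_P}:\TT k\T\Gamma_P^\alpha\to\T\TT k\Gamma_P^\alpha$ along the vertical maps $\TT k\RR^1$ and $\T\RR^k$. Using naturality of the canonical flip $\kappa_{k,\cdot}$ with respect to the smooth surjections $\Gamma_P^\alpha\times_M P\to P$ (obtained from the obvious map $[(\gamma,x)]\mapsto \gamma$ once a representative is chosen), one gets commutativity of the upper square in the big diagram displayed just before the statement -- this is a direct check on representatives $\jet k_0\jet 1_s[(\gamma(t,s),x)]$. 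The outer rectangle therefore coincides with the definition of $\kappa_k$ coming from \eqref{eqn:kappa_k_A}, while the lower rectangle is precisely \eqref{eqn:kappa_Atiyah}. Hence both recipes define the same relation, proving the proposition.

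The main obstacle I expect is the book-keeping in the third step: one has to verify that the extension by $P$ (the fibred product $\times_M P$) does not enlarge the relation, i.e.\ that two classes in $\TT kP/G$ (resp.\ $\T\TT kP/\T G$) are related via \eqref{eqn:kappa_Atiyah} if and only if \emph{some} lifts to $\TT k\T\Gamma_P^\alpha$ (resp.\ $\T\TT k\Gamma_P^\alpha$) are $\kappa_{k,\Gamma_P}$-related. This amounts to the observation that both the $\TT kG$-action on $\TT k\T P$ and the $\T G$-action on $\T\TT kP$ are obtained from the action of $G$ on $P$ by applying the appropriate Weil functor, so the orbits descend compatibly; the canonical flip $\kappa_{k,P}$ is $G$-equivariant with respect to these actions, which guarantees that the induced relation on the quotients is well-defined and agrees with the one produced by Theorem \ref{thm:structure_Ak}.
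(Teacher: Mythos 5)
Your proposal is correct and follows essentially the same route as the paper: identify $\A^k(\Gamma_P)\simeq\TT kP/G$ together with the anchor, factorize the reduction maps through $\TT kP$ via the fibred product $\times_M P$ (the paper does this uniformly for arbitrary Weil functors), check commutativity of the resulting upper square on representatives, and conclude that the reduction of $\kappa_{k,\Gamma_P}$ from Theorem \ref{thm:structure_Ak} coincides with the reduction of $\kappa_{k,P}$ in \eqref{eqn:kappa_Atiyah}. The diagram chase you flag as the main obstacle is exactly the (implicit) step in the paper's argument, and your sketch of it is sound.
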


Note that relation $\kappa_k$ can be interpreted as a relation of the coincidence of orbits. Namely classes $[X]\in\TT k\T P/\TT kG$ and $[Y]\in\T\TT kP/\T G$ are $\kappa_k$-related if the $\TT kG$-orbit of $X \in \TT k\T P$ and the $\T G$-orbit of $\kappa_{k,P}(Y)\in \TT k\T P$ have a non-empty intersection. Here $\TT k\T P$ is equipped with a natural $\TT k\T G$ action $\TT k\T r_{P,0}$, whereas $\TT kG$ and $\T G$ act as subgroups of $\TT k\T G$.


\newpage
\section{Higher algebroids}\label{sec:jets_red}

In this section  we define higher algebroids associated with a given almost Lie algebroid.  The definition is motivated by our construction of higher algebroids of a Lie groupoid from the previous Section \ref{sec:red_lie}. Later, in Subsection \ref{ssec:integration}, we compare these two notions and discuss the problems of integration of admissible paths and admissible homotopies on higher Lie algebroids.

\subsection{Higher algebroids associated with an almost Lie algebroid}\label{ssec:high_alg}

Throughout this subsection $(\tau:E\ra M,\kappa)$ will be an almost Lie algebroid. (Recall that $\kappa$ fully encodes the algebroid structure on $E$ -- cf. the remark before Proposition \ref{prop:kappa_bracket}.) By $\rho:E\ra\T M$ we will denote the associated anchor map. A higher algebroid associated with $E$ consists of a graded bundle $\E k$ equipped with a canonical relation $\kappa_k:\TT k E\relto\T\E k$.

\paragraph{The construction of bundles $\E k$.}
We start with the construction of the tower of graded bundles $\tauE k{k-1}:\E k\ra\E{k-1}$ from $\tau:E\ra M$. We proceed inductively:
\begin{itemize}
\item We take $\E 1:=E$ and $\tau^0_1=\tau$.
\item We define $\E 2$ as the set of all $\kappa$-invariant elements in $\T E$. Note that, by Proposition \ref{prop:properties_kappa} (d), $\E 2$ can be characterized as
\begin{equation}\label{eqn:E2}
\E 2:=\{A\in \T E:\T\tau(A)=\rho\circ\tau_E(A)\}.
\end{equation}
\item For $k\geq 1$ bundle $\E {k+1}$ is defined by the following formula
\begin{equation}\label{eqn:Ek}
\E{k+1}:=\T\E k\cap\TT kE,
\end{equation}
where $E^k$ is considered, by the inductive assumption, as a subset of $\TT {k-1}E$, hence both $\T \E k$ and $\TT k E$ are understood here as subsets of $\T \T^{k-1} E$.
\end{itemize}
We have two canonical inclusions $\iota^{k-1,1}:\E k\subset\TT{k-1}E$ and  $\iota^{1,k-1}:\E{k}\subset\T\E{k-1}$. They generalize canonical inclusions $\iota^{k-1,1}_M:\TT kM\subset\TT{k-1}\T M$ and $\iota^{1,k-1}:\TT kM\subset\T\TT{k-1}M$. Usually we will write just $\E k\subset\TT{k-1}E$ and $\E k\subset\T\E{k-1}$. The composition of $\iota^{1,k-1}$ with the tangent projection $\tau_{\E{k-1}}$ will be denoted by $\tauE k{k-1}:\E k\subset\T\E{k-1}\overset{\tau_{\E{k-1}}}{\lra}\E{k-1}$. It provides $\E k$ with a structure of a tower of graded bundles as will be proved later in Theorem \ref{thm:prop_Ek}.

\paragraph{The construction of relations $\kappa_k$.} For each $k$ we define
inductively relations $\kappa_k:\TT kE\relto\T\E k$ as follows:
\begin{itemize}
\item We take $\kappa_1:=\kappa$.
\item For $\kappa\geq 1$ we define $\kappa_{k+1}=\left(\kappa_{\E k}\circ \T\kappa_k\right)\cap \left(\TT {k+1} E\times\T \E{k+1}\right)$,
i.e., $\kappa_{k+1}$ is the restriction of the relation $\wt\kappa_{k+1}:=\kappa_{\E k}\circ\T\kappa_k\subset\T\TT k E\times\T\T\E k$ to the subset $\TT {k+1} E\times\T \E{k+1}$:
\begin{equation}\label{eqn:kappa_k_E}
\xymatrix{
\T\TT kE\ar@{-|>}[rr]^{\T\kappa_{k}} &&\T\T\E k\ar[rr]^{\kappa_{\E k}} && \T\T\E k\\
\TT{k+1}E\ar@{--|>}[rrrr]^{\kappa_{k+1}}\ar@{_{(}->}[u] &&&&\T\E{k+1}.\ar@{_{(}->}[u] }
\end{equation}
\end{itemize}

Relations $\kappa_k$ are, in fact, ZMs as will be proved later in Theorem \ref{thm:prop_Ek}.

\paragraph{Higher algebroids associated with $(E, \kappa)$.}
Observe that our constructions of bundles $\E k$ and relations $\kappa_k$ mimic analogous constructions from the previous Section \ref{sec:red_lie}.
Therefore it is natural to propose the following

\begin{df}\label{def:Ek} Let $(\tau:E\ra M,\kappa)$ be an almost Lie algebroid. Bundle $\E k$, together with the canonical relation $\kappa_k:\TT kE\relto\T\E k$ defined above, will be called a \emph{higher} (\emph{$\thh{k}$-order}) \emph{algebroid associated with $E$}.
\end{df}
The result below describes basic properties of higher algebroids defined in such a way. In particular, it shows that $\E k$ is indeed a graded bundle, and $\kappa_k$ a Zakrzewski morphism.


\begin{thm}[Properties of higher algebroids]\label{thm:prop_Ek} Higher algebroids $(\E k,\kappa_k)$ have the following properties for each $k\geq 1$:
\begin{enumerate}[(i)]
\item \label{lem:i}  $\E k$ is a graded subbundle (\cite{JG_MR_gr_bund_hgm_str_2011}) of $\TT{k-1}E\to M$ of degree $k$ and rank
$(r,\ldots,r)$ where $r=\operatorname{rank} E$. In particular, $\E k$ is a submanifold of $\TT{k-1}E$.

\item\label{lem:i_a} Linear coordinates $(x^a,y^i)$ on $E$ induce canonical graded coordinates $(x^a,y^i,y^{i,(1)},\hdots, y^{i,(k-1)})$ on $\E k$ inherited, under $\E k\subset \TT{k-1}E$, from the canonical coordinates $\left(x^{a,(\alpha)},y^{i,(\beta)}\right)_{\alpha,\beta=0,1,\hdots,k-1}$  on $\TT{k-1}E$. Coordinates $x^a$ are of degree 0, whereas $y^{i,(\alpha)}$ of degree $\alpha+1$.

\item \label{lem:ii} $\tauE k{k-1}:\E k\ra\E{k-1}$ is a morphism of graded bundles.

\item \label{lem:iii} $\tauE{k+1}k$ is the restriction of $\tau^k_{k-1,E}:\TT k E \ra\TT{k-1}E$ to $\E{k+1}\subset\TT k E$ :
$$\xymatrix{
\TT kE  \ar[rr]^{\tau_{k-1,E}^k} && \TT{k-1} E\\
\E{k+1}\ar[rr]^{\tauE{k+1}k}\ar@{_{(}->}[u]&&\E k.\ar@{_{(}->}[u]}$$

\item \label{lem:iv}$\tauE{k+1}k$ is the restriction of $\T\tauE k{k-1}:\T\E k\ra\T\E{k-1}$ to $\E{k+1}\subset\T\E k$ :
$$\xymatrix{
\T\E k \ar[rr]^{\T\tauE k{k-1}} && \T\E{k-1}\\
\E{k+1}\ar[rr]^{\tauE{k+1}k}\ar@{_{(}->}[u]&&\E k.\ar@{_{(}->}[u]}$$

\item \label{lem:v} Elements of $\E{k+1}$ can be characterized as these elements $A\in \TT kE$ for which
\begin{equation}\label{eqn:char_Ek_bis}
\TT k\tau (A)= \TT{k-1}\rho\circ \tau_{k-1,E}^k (A).
\end{equation}

\item \label{lem:vi} Elements of $\E{k+1}$ can be characterized as these elements $A\in\T\E k$ for which
\begin{equation}\label{eqn:char_Ek}
\tau_{\E k}(A)=\T\tauE k{k-1}(A).
\end{equation}

\item \label{lem:vii} $\TT{k-1}\rho:\TT{k-1}E\ra\TT{k-1}\T M$ maps $\E k$ into $\TT k M$ respecting the canonical gradings and, moreover, $(T^{k-1}\rho)^{-1}(T^k M) =E^k$.

\item\label{lem:ix} Relation $\kappa_k$ is a Zakrzewski morphism over $\rho_k:=\TT{k-1}\rho|_{\E k}:\E k\ra\TT kM$:
\begin{equation}\label{eqn:kappa_k}
\xymatrix{\TT k E\ar@{-|>}[rr]^{\kappa_k}\ar[d]^{\TT k\tau} &&\T \E k\ar[d]^{\tau_{E^k}}\\
\TT k M && \E k.\ar[ll]_{\rho_k}}
\end{equation}

\item\label{lem:x} For $k\geq 2$ vector bundle morphisms $\tau^k_{k-1,E}:\TT k E\ra\TT{k-1}E$ and $\T\tauE k{k-1}:\T\E k\ra \T\E{k-1}$ relate ZMs $\kappa_k$ and $\kappa_{k-1}$, i.e.,
\begin{equation}\label{eqn:kappa_A}
\xymatrix{\TT k E\ar@{-|>}[rr]^{\kappa_k}\ar[d]^{\tau^k_{k-1,E}} && \T \E k\ar[d]^{\T\tauE k{k-1}}\\
\TT{k-1} E \ar@{-|>}[rr]^{\kappa_{k-1}}&& \T\E{k-1}.}
\end{equation}

\item\label{lem:xi} For every $k\geq 1$ vector bundle morphisms $\TT k\rho:\TT k E\ra \TT k \T M$ and $\T\rho_k:\T\E k\ra\T\TT k M$ relate ZMs $\kappa_k$ with the canonical flip $\kappa_{k,M}$, i.e.,
\begin{equation}\label{eqn:kappa_B}
\xymatrix{\TT k E\ar@{-|>}[rr]^{\kappa_k}\ar[d]^{\TT k \rho} && \T \E k\ar[d]^{\T\rho_k}\\
\TT{k} \T M \ar@{->}[rr]^{\kappa_{k, M}}&& \T\TT{k} M.}
\end{equation}
\end{enumerate}
\end{thm}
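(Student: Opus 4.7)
The proof proceeds by strong induction on $k$, the base case $k = 1$ being the content of Section~\ref{ssec:algebroids}: $\E 1 = E$ is a rank-$r$ vector bundle, $\rho_1 = \rho$, and $\kappa_1 = \kappa$ is a ZM over $\rho$ by Proposition~\ref{prop:properties_kappa}, with (x) and (xi) either vacuous or given directly. Assume (i)--(xi) through level $k$; I establish them at level $k+1$. The strategy is to first prove the characterizations (vi) and (vii) of $\E{k+1}$, then to deduce (i)--(v) and (viii) by coordinate computations, and finally to prove (ix)--(xi) by combining Theorem~\ref{thm:ZM_lift} with naturality arguments.

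For (vi), the inductive hypothesis cuts out $\E k \subset \TT{k-1}E$ by $\TT{k-1}\tau(B) = \TT{k-2}\rho \circ \tau^{k-1}_{k-2,E}(B)$. An element $A \in \TT k E$ lies in $\E{k+1} = \T\E k \cap \TT kE$ iff $A$ is tangent to $\E k$, which amounts to the derivative of the above equation along $A$. Unwinding what this means using that $A$ is holonomic (i.e., $A \in \TT kE$) and applying the AL compatibility axiom \eqref{eqn:AL} via Proposition~\ref{prop:properties_kappa}(b) collapses the condition to $\TT k\tau(A) = \TT{k-1}\rho \circ \tau^k_{k-1,E}(A)$, which is (vi). The equivalent form (vii) then follows by identifying $\tau_{\TT{k-1}E}\big|_{\TT kE} = \tau^k_{k-1,E}$ and tracking the inclusions. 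Finally, (viii) is a direct consequence: applying $\TT{k-1}\rho$ to the defining equation of $\E k$ produces the defining equation of $\TT kM \subset \T\TT{k-1}M$ (which is (vi) in the special case of the tangent algebroid), and the converse inclusion uses that these equations are preserved by $\TT{k-1}\rho$.

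In adapted coordinates $(x^{a,(\alpha)}, y^{i,(\beta)})$ on $\TT kE$, condition (vi) reads, at each weight level $0 \leq \alpha \leq k-1$, as $x^{a,(\alpha+1)} = \rho^a_i(x)\,y^{i,(\alpha)} + (\text{lower-order terms in the }y\text{'s})$, successively eliminating all $x^{a,(\alpha)}$ with $\alpha \geq 1$. The surviving free coordinates are $(x^a, y^{i,(0)}, \ldots, y^{i,(k)})$ of weights $0, 1, \ldots, k+1$, proving (i) and (ii). The tower statements (iii)--(v) follow formally: (iv) reads off from the inclusion $\E{k+1} \subset \TT kE$ combined with the inductive (iv); (v) from the inclusion $\E{k+1} \subset \T\E k$ combined with (vii); and (iii) from the visibly weight-preserving form of the defining equations.

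For (ix), apply Theorem~\ref{thm:ZM_lift}: $\T\kappa_k : \T\TT kE \relto \T\T\E k$ is a ZM over $\T\rho_k : \T\E k \ra \T\TT kM$, and post-composition with the vector bundle isomorphism $\kappa_{\E k}$ yields that $\wt\kappa_{k+1} = \kappa_{\E k} \circ \T\kappa_k$ is a ZM with the same base map. By (viii) applied to the tangent algebroid $\T M$ together with the inductive (xi), $\T\rho_k$ sends $\E{k+1} \subset \T\E k$ into $\TT{k+1}M \subset \T\TT kM$, where it restricts to $\rho_{k+1} = \TT k\rho\big|_{\E{k+1}}$. Hence the restriction of $\wt\kappa_{k+1}$ to $\TT{k+1}E \times \T\E{k+1}$ is a ZM over $\rho_{k+1}$, which is (ix). The compatibilities (x) and (xi) are then obtained by diagram chases using the tower structure of $\tau^k_{k-1}$, the inductive versions of (x) and (xi), and the inductive definition \eqref{eqn:kappa_kM} relating the canonical flips $\kappa_{k+1,M}$ to $\T\kappa_{k,M} \circ \kappa_{\TT kM}$. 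The main obstacle throughout is verifying that the restriction in the definition of $\kappa_{k+1}$ truly produces a ZM (not merely a relation) with base map $\rho_{k+1}$; this is precisely where the characterizations (vi)--(viii) together with the AL axiom \eqref{eqn:AL} do the essential work, by ensuring that $\T\rho_k$ carries the subset $\E{k+1}$ into the holonomic locus $\TT{k+1}M$.
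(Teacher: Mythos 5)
Your overall architecture (induction on $k$, characterizations \eqref{eqn:char_Ek_bis}--\eqref{eqn:char_Ek} first, coordinate description of the graded structure, then the ZM property via $\T\kappa_k$ and Theorem~\ref{thm:ZM_lift}) matches the paper's, and the parts concerning (i)--(viii) are essentially workable: the simplification of the tangent-lifted defining equation of $\E k$ on holonomic vectors is a purely formal identification (note it does \emph{not} require the almost Lie axiom \eqref{eqn:AL}, which you invoke there unnecessarily), and your coordinate elimination of $x^{a,(\alpha)}$, $\alpha\geq 1$, is an acceptable substitute for the paper's more structural argument via the fibration $\Pi_k=(\tau^k_{k-1,E},\TT k\tau)$ and the total homogeneity structure $H^{k+1}_t$.

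The genuine gap is in (ix). Knowing that $\wt\kappa_{k+1}=\kappa_{\E k}\circ\T\kappa_k$ is a ZM over $\T\rho_k$ and that $\T\rho_k$ restricted to $\E{k+1}$ is $\rho_{k+1}$ with values in $\TT{k+1}M$ only guarantees that $\wt\kappa_{k+1}$ relates the fiber of $\T\TT k\tau$ over $\rho_{k+1}(\overline b)$ with the fiber $\T_{\overline b}\T\E k$ for $\overline b\in\E{k+1}$; it does \emph{not} guarantee that the resulting vector lies in the subbundle $\T\E{k+1}\subset \T\T\E k\big|_{\E{k+1}}$. By Remark~\ref{rem:red_ZM}, the restriction $\wt\kappa_{k+1}\cap\bigl(\TT{k+1}E\times\T\E{k+1}\bigr)$ is a ZM only if one proves that for every $A\in\TT{k+1}E$ the element $B\in\wt\kappa_{k+1}(A)$ lying over $\overline b\in\E{k+1}$ is actually \emph{tangent} to $\E{k+1}$; otherwise the fiberwise maps need not be totally defined and the restriction is merely a relation. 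This tangency statement is the real content of the induction step and is absent from your argument. In the paper it is proved separately for $k+1=2$, where the compatibility diagram \eqref{eqn:kappa_AL} (equivalently the almost Lie axiom \eqref{eqn:AL}) is used to verify the condition \eqref{eqn:kappa2_good} cutting out $\T\E 2$, and for $k+1>2$ by showing $b=b'$ in the characterization \eqref{eqn:char_Ek}, using the tangent lift of the inductive compatibility \eqref{eqn:kappa_A}$_k$ together with the inductive ZM property of $\kappa_k$ (uniqueness of the $\kappa_k$-related element in a fixed fiber $\T_{\underline b}\E k$). So the almost Lie axiom is indispensable, but its role is exactly this tangency step in the base case, not the characterizations (vi)--(viii) nor the statement that $\T\rho_k$ maps $\E{k+1}$ into the holonomic locus, to which you attribute it.
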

\begin{proof}

Let us first prove properties \eqref{lem:iii}, \eqref{lem:iv} and \eqref{lem:vi}.
Observe that $\tau_{\TT{k-1} E}\big|_{\TT{k}E}=\tau^k_{k-1,E}$ and since $\E{k}\subset\TT{k-1}E$ we have $\tau_{\TT{k-1} E}\big|_{\T\E{k}}=\tau_{\E{k}}$:
$$\xymatrix{
\T\E{k}\ar@{^{(}->}[d]\ar[rr]^{\tau_{\E{k}}}&&\E{k}\ar@{^{(}->}[d]\\
\T\TT{k-1}E\ar[rr]^{\tau_{\TT{k-1}E}}&&\TT{k-1} E\\
\TT{k}E\ar[rr]^{\tau^k_{k-1,E}}\ar@{_{(}->}[u]&& \TT{k-1}E.\ar@{=}[u]}$$
Since $\E{k+1}\subset\T \E k$; we have
$\tauE{k+1}k\overset{df}=\tau_{\E{k}}\big|_{\E {k+1}}=\tau_{\TT{k-1}E}\big|_{\E {k+1}}$. Now since $\E{k+1}\subset\TT{k} E$ the later equals $\tau^k_{k-1,E}\big|_{\E {k+1}}$. That gives \eqref{lem:iii}
\begin{equation}\label{eqn:pi}
\tauE{k+1}k=\tau^k_{k-1,E}\big|_{\E {k+1}}.
\end{equation}

To prove \eqref{lem:iv} consider the following commutative diagram
$$\xymatrix{
**[r]\T\TT{k-1}E\ar[rr]^{\T\tau^{k-1}_{k-2,E}}&&\T\TT{k-2}E\\
\E{k+1}\subset\TT k E\ar[rr]^{\tau^k_{k-1,E}}\ar@<-3ex>@{_{(}->}[u]&&\TT{k-1}E.\ar@{_{(}->}[u]}$$
It follows that
$$\T\tauE k{k-1}\big|_{\E{k+1}}\overset{\eqref{eqn:pi}}=\T\tau^{k-1}_{k-2,E}\big|_{\E{k+1}}= \tau^{k}_{k-1,E}\big|_{\E{k+1}}\overset{\eqref{eqn:pi}}=\tauE{k+1}k.$$

To check \eqref{lem:vi} recall that elements of $\TT kE$ can be characterized as these elements $A\in\T\TT{k-1} E$ for which $\tau_{\TT{k-1}E}(A)=\T\tau^{k-1}_{k-2,E}(A)$:
$$\xymatrix{
**[l]\TT k E\subset\T\TT{k-1} E\ar[rr]^{\T\tau^{k-1}_{k-2,E}}\ar@<-2ex>[d]^{\tau_{\TT{k-1} E}}&&\T\TT{k-2}E.\\
**[l]\TT{k-1} E\ar@{==}[rru]}$$
We conclude that $A\in\T\E k$ lies in $\E{k+1}=\TT kE\cap\T\E k$ if and only if
$$\tau_{\TT{k-1}E}\big|_{\T\E k}(A)=\T\tau^{k-1}_{k-2,E}\big|_{\T\E k}(A).$$
Yet, $\tau_{\TT{k-1}E}\big|_{\T\E k}=\tau_{\E k}$ and $\T\tau^{k-1}_{k-2,E}\big|_{\T\E k}\overset{\eqref{eqn:pi}}=\T\tauE k{k-1}$, which proves \eqref{eqn:char_Ek}.
\bigskip

Now we will prove inductively properties \eqref{lem:i}, \eqref{lem:i_a} \eqref{lem:ii}, \eqref{lem:v} and \eqref{lem:vii}. Case $k=1$ is very simple. Indeed, property $\eqref{lem:v}_{k=1}$ is just the description of $\E 2$ given by \eqref{eqn:E2}. Properties $\eqref{lem:i}_{k=1}$ and $\eqref{lem:i_a}_{k=1}$  hold since $\E 1=E$ has a canonical graded structure of degree 1 as a vector bundle over $M$. Finally $\eqref{lem:ii}_{k=1}$ and $\eqref{lem:vii}_{k=1}$ are trivial. Assume now that the considered properties holds for a given $k$. We shall prove them for $k+1$.

Bundle $\TT k E$ is a double graded bundle of degrees $1$ and $k$ over bases $\TT k M$ and $E$, respectively. The associated homotheties are $h^{\TT k\tau}:\R\times\TT k E\ra\TT k E$ -- the $\thh{k}$ tangent lift of the canonical \emph{homogeneity structure} (\cite{JG_MR_gr_bund_hgm_str_2011})  on a vector bundle $\tau$; and $h^{k,E}:\R\times\TT k E\ra\TT kE$ -- the canonical homogeneity structure associated with a higher tangent bundle structure on $\TT k E\ra E$. Both actions commute, i.e.,
$$h_t^{k,E}\circ h_s^{\TT k \tau}= h_s^{\TT k \tau}\circ h_t^{k,E} \quad\text{for every $t,s\in\R$}, $$
where $h_t$ stands for $h(t,\cdot)$. It follows that $\TT k E$ is a graded bundle of degree $k+1$ (the \emph{total degree}) with the homogeneity structure
$$H^{k+1}_t:=h_t^{k,E}\circ h_t^{\TT k\tau}. $$

Let us now consider a map
$$\Pi_k:=(\tau^k_{k-1,E},\TT k\tau):\TT kE\lra\TT{k-1} E\times_{\TT{k-1}M}\TT k M.$$
Map $\Pi_k$ is a smooth fibration with typical fiber $E_x$ ($r=\operatorname{rank} E$-dimensional) as can be easily seen from the coordinate description
$$\Pi_k:\left(x^{a,(\alpha)},y^{i,(\beta)}\right)_{\substack{\alpha=0,1,\hdots,k\\ \beta=0,1,\hdots,k}}\longmapsto
\left(x^{a,(\alpha)},y^{i,(\beta)}\right)_{\substack{\alpha=0,1,\hdots,k,\quad\\
 \beta=0,1,\hdots,k-1\,}},$$
where $(x^a,y^i)$ are linear coordinates on $E$ and $\left(x^{a,(\alpha)},y^{i,(\beta)}\right)_{\alpha,\beta=0,\hdots,k}$ canonical induced coordinates on $\TT kE$.

Observe that if $\Pi_k(A)=(a,b)\in \TT{k-1}E\times_{\TT{k-1}M}\TT k M$, then
$$\Pi_k\left(h_t^{k,E}\circ h_s^{\TT k \tau}(A)\right)=\left(h_t^{k-1,E}\circ h_s^{\TT{k-1}\tau}(a),h_t^{k,M}(b)\right)$$
 and consequently
\begin{equation}\label{eqn:homogeneity}
\Pi_k\left(H_t^{k+1}(A)\right)=\left(H^k_t(a),h_t^{k,M}(b)\right).
\end{equation}

Due to $\eqref{lem:vii}_k$, $\TT{k-1}\rho$ maps $\E k$ into $\TT k M$ and hence $\eqref{eqn:char_Ek_bis}_k$ says that $\E{k+1}$ is a pullback of bundle $\Pi_k$ with respect to the inclusion $\Graph\left(\TT{k-1}\rho\big|_{\E k}\right)\subset\E k\times_{\TT{k-1}M}\TT k M\subset\TT{k-1} E\times_{\TT{k-1}M}\TT kM$. We conclude that $\E{k+1}$ is a subbundle of $\TT k E$, so to prove $\eqref{lem:i}_{k+1}$ we need to check whether $\E{k+1}$ is preserved by $H^{k+1}_t$. Consider $A\in\E{k+1}\subset\TT k E$, i.e., $A$ such that $\Pi_k(A)\overset{\eqref{eqn:char_Ek_bis}_k}=(a,\TT{k-1}\rho(a))$ for some $a\in \E k$. Now
$$\Pi_k\left(H^{k+1}_t(A)\right)\overset{\eqref{eqn:homogeneity}}=\left(H^k_t(a),h^{k,M}_t(\TT{k-1}\rho(a))\right)
\overset{\eqref{lem:vii}_k}=\left(H^k_t(a),\TT{k-1}\rho(H^k_t(a))\right).$$
By $\eqref{lem:i}_k$, $\E k$ is a graded subbundle of $\TT{k-1}E$, so $H^k_t(a)\in \E k$ and hence, by $\eqref{eqn:char_Ek_bis}_k$, $H^{k+1}_t(A)\in\E{k+1}$ which proves $\eqref{lem:i}_{k+1}$. The coordinate description  $\eqref{lem:i_a}_{k+1}$ follows easily from the above characterization of $\E{k+1}$ as a pullback bundle of $\Pi_k$.

As a direct consequence of the above considerations observe that for $A$ as before
$$\tauE{k+1}k\left(H^{k+1}_t(A)\right)\overset{\eqref{eqn:pi}}=\tau^k_{k-1,E}\left(H^{k+1}_t(A)\right)=H^k_t\left(\tau^k_{k-1,E}(A)\right)\overset{\eqref{eqn:pi}}=H^k_t\left(\tauE{k+1}k(A)\right),$$
which proves $\eqref{lem:ii}_{k+1}$.

To check $\eqref{lem:v}_{k+1}$ observe that, due to $\eqref{eqn:char_Ek_bis}_k$,
$\T \E{k+1}$ is characterized in $\T\TT kE$ as
\begin{equation}\label{eqn:T_char_Ek_bis}
\{X\in \T\TT k E: \T\TT k\tau(X) = \T\TT{k-1}\rho\circ \T\tau_{k-1,E}^k (X)\}.
\end{equation}
(The tangent lift of a pullback $f^\ast E$ of a bundle $\sigma:E\ra M$ with respect to a map $f:M'\ra M$ is the pullback $(\T f)^\ast(\T E)$.) If such an $X$ belongs to $\TT{k+1}E$ (and hence to $\TT{k+1}E\cap\T\E{k+1}=\E{k+2}$), then $\T\TT k\tau(X) = \TT{k+1}\tau(X)$ and $\T\tau_{k-1,E}^k(X) = \tau_{k,E}^{k+1}(X) \in \TT kE$ so in this case\eqref{eqn:T_char_Ek_bis} simplifies to
$\TT{k+1} \tau (X) = \TT{k}\rho\circ \tau_{k,E}^{k+1}(X)$ which gives $\eqref{lem:v}_{k+1}$.

Finally, assuming $\eqref{lem:vii}_k$ we get
$$\TT{k}\rho:\E{k+1}=\T\E k\cap\TT kE\ra\T\TT k M\cap\TT k\T M=\TT{k+1}M.$$
Since $\TT k\rho$ is a morphism of double graded bundles
$$\xymatrix{
& \TT k E \ar[ld]_{\tau^k_E} \ar[d]^{\TT k\tau} \ar[rr]^{\TT k\rho} && \TT k\T M \ar[ld]_{\tau^k_{\T M}} \ar[d]^{\TT k\tau_M} \\
E & \TT k M & \T M & \TT kM}$$
we get
$$\TT k\rho\left(H^{k+1}_t(A)\right)=\TT k\rho\left(h^{k,E}_t\circ h^{\TT k\tau}_t(A)\right)=h^{k,\T M}_t\circ h^{\TT k\tau_M}_t\left(\TT k\rho(A)\right).$$
Since $h^{k,\T M}_t\circ h^{\TT k\tau_M}_t=h^{k+1,M}_t$ on  $\TT{k+1}M\subset\TT k\T M$ and we already know that $\TT k\rho(A)$ belongs to $\TT{k+1}M$; the r.h.s. of the above equality is $h^{k+1,M}_t\left(\TT k\rho(A)\right)$, which proves that $\TT k\rho$ is a morphism of graded bundles.

Take now any $X\in \TT{k}E$ such that $Y:=\TT{k}\rho(X)\in \TT{k+1}M$. Let us consider the following diagram:
$$
\xymatrix{
&& \TT{k}M \ni Y'' \\
 X\in \TT{k}E \ar@<2ex>[d]^{\tau_{k-1,E}^k} \ar[r]^>>>>>>>>>>{\TT{k}\rho} \ar[rru]^{\TT{k}\tau} & **[r]\TT{k}\T M\ni Y \ar@<3ex>[d]^{\tau_{k-1,\T M}^k} \ar[ru]_{\TT{k}\tau_M}\\
 \underline{X}\in \TT{k-1}E \ar[r]^>>>>>>>>>>>{\TT{k-1}\rho} & **[r]\TT{k-1}
\T M \ni Y', &}$$
where $\underline{X} = \tau_{k-1,E}^k(X)$, $Y' =\tau_{k-1,\T M}^k(Y)$  and $Y'' = \TT{k}\tau_M(Y)$.
Note that $Y'=Y''$ since $Y\in \TT{k+1} M$, hence $\TT{k-1}\rho \circ \tau_{k-1,E}^k (X)=\TT{k}\tau(X) $.
Therefore, by $\eqref{lem:v}_{k}$, $X\in \E{k+1}$, which finishes the proof of $\eqref{lem:vii}_{k+1}$.
\bigskip


Now we shall prove the remaining properties  $\eqref{lem:ix}$--$\eqref{lem:xi}$ making use of already proved properties \eqref{lem:i}--\eqref{lem:vii}.  Again we shall use the inductive approach.
Property $\eqref{lem:ix}_{k=1}$ holds since $\kappa_1=\kappa$ which is a ZM over $\rho_1=\rho$. Similarly, $\eqref{lem:xi}_{k=1}$ is just a diagram \eqref{eqn:kappa_AL} of an almost Lie algebroid. Property $\eqref{lem:x}_{k=2}$ ($k=2$ is the lowest value in this case) shall be checked in a moment.

Assume now that on the $\thh{k}$ level $\kappa_k$ is a well-defined ZM satisfying \eqref{eqn:kappa_A} and \eqref{eqn:kappa_B}. We shall prove that $\kappa_{k+1}$ is a well-defined ZM, checking $\eqref{lem:ix}_{k+1}$. Observe that $\wt\kappa_{k+1}:=\kappa_{E^k}\circ\T\kappa_k$ is a ZM over $\T\rho_k:\T\E k\ra\T\TT kM$ as the composition of a ZM $\T\kappa_k$ with a vector bundle isomorphism:
$$\xymatrix{
\T\TT k E\ar@{-|>}[rr]^{\T\kappa_k}\ar[d]^{\T\TT k\tau}&& \T\T \E k \ar[rr]^{\kappa_{\E k}}\ar[d]^{\T\tau_{\E k}}&& \T\T\E k\ar[d]^{\tau_{\T\E k}}\\
\T\TT k M &&\T\E k\ar[ll]_{\T\rho_k}\ar@{=}[rr] && \T\E k.}
$$

We are going to show that $\kappa_{k+1}$, the restriction of $\tilde{\kappa}_{k+1}$ to the linear
 subbundle $\T^{k+1}\tau\times \tau_{E^{k+1}}$
of $\T\T^k\tau\times \tau_{\T\E k}$ is a ZM. Let us fix $A\in \T^{k+1}E$ and $B\in \T\T\E k$ such that
$B\in \kappa_{E^k}\circ \T \kappa_{k}(A)$. It amounts to show (cf. Remark \ref{rem:red_ZM})
that if $B$ lies over $\overline b:=\tau_{\T \E k}(B)\in \E{k+1} \subset \T\E k$ then $B$ is tangent to $\E{k+1}$.
We will consider cases $k+1=2$ and $k+1>2$, separately.
\smallskip

For $k+1=2$, since $\E 2$ is described by  \eqref{eqn:E2}, we get
\begin{equation}\label{eqn:kappa2_good}
\T\E 2 = \{X\in \T\T E: \T\T\tau(X) = \T\rho \circ \T\tau_E(X)\}.
\end{equation}
Consider now the following diagram of the ZM $\kappa_E\circ \T\kappa$:
$$
\xymatrix{
& **[l] a'\in \T E \ar[rr]^{\T\rho} && \T
\T M \ar[rr]^{\kappa_M} && \T\T M\ni b'  \\
**[l] a\in \T E \ar@{--}[ur] \ar@{-|>}[rr]^{\kappa} && \T E \ar[rr]^{=} && **[r] \T E\ni b \ar@{-->}[ur]^{\T\rho} & \\
**[l]  A\in \T\T E \ar@<-2ex>[d]^{\T\T\tau} \ar@<1ex>[u]^{\tau_{\T E}} \ar[uur]_(0.4){\T\tau_E} \ar@{-|>}[rr]^{\T\kappa} && \T\T E \ar[d]^{\T\tau_E} \ar[u]^{\tau_{\T E}} \ar[uur]_(0.4){\T\T\tau} \ar[rr]^{\kappa_E}&& **[r] \T\T E\ni B  \ar@<1.5ex>[d]^{\tau_{\T E}} \ar@<-1.5ex>[u]^{\T\tau_E} \ar@<-0.5ex>[uur]_{\T\T\tau} & \\
**[l]\T\T M && \T E \ar[ll]_{\T\rho} \ar[rr]^{=}  && **[r] \T E\ni\overline b. &
}
$$
It is the tangent lift of \eqref{eqn:kappa_diagram} in the upper left parallelogram. The other "squares" are obvious.
It is a commutative diagram of relations if we take only "solid" arrows into account.
We have already fixed $A \in \TT 2 E$ and $B\in \T\T E$ such that
$B\in \kappa_E(\T\kappa (A))$. Let us denote: $a=\tau_{\T E}(A)$, $a'=\T\tau_E(A)$, $b=\T\tau_E(B)$, $b' = \T\T\tau(B)$. We know that $a=a'$ as $A\in \TT 2E$.
Due to \eqref{eqn:kappa2_good}, it is enough to show that
\begin{equation}\label{eqn:thesis2}
\T\rho (b) = b'.
\end{equation}
From the commutativity of the diagram we read that
\begin{equation}\label{eqn:thesis3}
b\in \kappa(a), \quad b'=\kappa_M(\T\rho) (a').
\end{equation}
Now let us consider the commutative diagram \eqref{eqn:kappa_AL}:
$$
\xymatrix{
**[l] a=a'\in \T E \ar@<-1ex>[d]_{\T\rho}  \ar@{-|>}[r]^{\kappa} & **[r] \T E\ni b \ar@<1ex>@{-->}[d]^{\T\rho}  \\
\T\T M \ar[r]^{\kappa_M} & **[r] \T\T M\ni b',
}
$$
which proves \eqref{eqn:thesis2} as $\{T\rho (b)\} \overset{\eqref{eqn:thesis3}}\subset \T\rho(\kappa(a)) = \T\rho(\kappa(a')) = \{\kappa_M(\T\rho(a'))\}\overset{\eqref{eqn:thesis3}} = \{b'\}$.
(Observe that we didn't use explicitly the fact that $\overline b\in \E 2$. By property \eqref{lem:vii} this is, however, a necessary condition for $\T\T\tau(A)\in \TT 2M\subset\T\T M$.)

Having proved $\eqref{lem:ix}_{k=2}$ we can conclude $\eqref{lem:x}_{k=2}$. Indeed, consider the following commutative diagram:
$$\xymatrix{
 \TT 2 E\ar@{^{(}->}[r] \ar[rd]_{\tauE 2{1,E}}&\T\T E\ar@{-|>}[rr]^{\T\kappa}\ar[d]^{\tau_{\T E}}&& \T\T E  \ar[rr]^{\kappa_{E}}\ar[d]^{\tau_{\T E}}&& \T\T E \ar[d]^{\T\tau_{E}}&\T\E  2\ar[ld]^{\T\tauE 21} \ar@{_{(}->}[l]\\
&\T E\ar@{-|>}[rr]^{\kappa} &&\T E  \ar@{=}[rr] && \T E.
}$$
By $\eqref{lem:ix}_{k=2}$, we know that $\kappa_E\circ\T\kappa$ restricts to $\kappa_2$, which gives $\eqref{eqn:kappa_A}_{k=2}$.
\smallskip

For the case $k+1>2$ the reasoning is similar. Consider $A\in\TT{k+1} E$,  and $B\in(\wt\kappa_{k+1})(A)\subset\T\T\E{k}$ lying over $\overline b=\tau_{\T\E k}(B)\in\E{k+1}\subset\T \E k$. Let us denote: $a=\tau_{\TT k E}(A)$, $a'=\T\tau^k_{k-1,E}(A)$, $b=\T\tau_{\E k}(B)$, $b' = \T\T\tau^k_{k-1}(B)$. We know that $a=a'$ as $A\in \TT {k+1}E$. By \eqref{eqn:char_Ek}, vector $B\in \T\T\E k$ belongs to $\T\E{k+1}$ if and only if
$$\T\tau_{\E k}(B)=\T\T\tauE k{k-1}(B),$$
so using the notation above  we have to check if
\begin{equation}\label{eqn:kappa_k_good}
b=b'.
\end{equation}

From the following diagram (which combines the tangent prolongation of $\kappa_k$ with $\kappa_{\E k}$)
$$\xymatrix{
 A\in \T\TT k E\ar@{-|>}[rr]^{\T\kappa_k}\ar@<3ex>[d]^{\tau_{\TT kE}}&& \T\T \E k \ar[rr]^{\kappa_{\E k}}\ar[d]^{\tau_{\T\E k}}&& \T\T\E k\ar@<-3ex>[d]^{\T\tau_{\E k}}\ni B\\
a\in\TT k E\ar@{-|>}[rr]^{\kappa_k} &&\T\E k \ar@{=}[rr] && \T\E k\ni b
}
$$
we see that
$$b\in \kappa_k(a);$$
i.e., $b$ is an element $\kappa_k$-related to $a$ which lies over $\underline b:=\tau_{\E k}(b)=\tau_{\E k}\circ\T\tau_{\E k}(B)=\tau_{\E k}\circ\tau_{\T\E k}(B)=\tau_{\E k}(\overline b)$:
$$\xymatrix{
B\in\T\T\E k \ar[rr]^{\tau_{\T\E k}} \ar@<2ex>[d]^{\T\tau_{\E k}} &&\T\E k\ni \overline b\ar@<-2ex>[d]^{\tau_{\E k}}\\
b\in \T\E k\ar[rr]^{\tau_{\E k}} && \E k\ni\underline b.}$$

Similarly, taking the tangent lift of $\eqref{eqn:kappa_A}_k$ composed with $\kappa_{\E k}$ we get
$$\xymatrix{A\in\T\TT k E\ar@{-|>}[rr]^{\T\kappa_k}\ar@<2ex>[d]^{\T\tau^k_{k-1,E}} && \T\T\E k\ar[d]^{\T\T\tauE k{k-1}} \ar[rr]^{\kappa_{\E k}}&&\T\T\E k\ni B\ar@<-2ex>[d]^{\T\T\tauE k{k-1}} \\
a'\in\T\TT{k-1} E \ar@{-|>}[rr]^{\T\kappa_{k-1}}&& \T\T\E{k-1}\ar[rr]^{\kappa_{\E{k-1}}}&&\T\T\E{k-1}\ni b',}
$$
which implies that
$$b'\in\wt\kappa_{k}(a')=\wt\kappa_{k}(a).$$
Using property \eqref{lem:vi} we get that  $b'$ lies over $\T\tauE k{k-1}(\overline b)\overset{\eqref{eqn:char_Ek}}=\tau_{\E k}(\overline b)=\underline b\in\E k$:
$$\xymatrix{
B\in \T\T\E k\ar@<2ex>[d]^{\T\T\tauE k{k-1}} \ar[rr]^{\tau_{\T\E k}}&& **[r] \T\E k\supset \E{k+1}\ni \overline b\ar@<1ex>[d]^{\T\tauE k{k-1}}\\
b'\in\T\T\E{k-1}\ar[rr]^{\tau_{\T\E{k-1}}}&& **[r] \T\E{k-1}\ni \underline b.}$$
In other words, $b'$ is an element $\wt\kappa_k$-related to $a\in\TT k E$ lying over $\underline b\in\E k$. By our inductive assumption $\wt\kappa_k\big|_{\TT k E\times\T\E k}=\kappa_k$, so we have
$$b'\in\kappa_k(a).$$
We see that both $b$ and $b'$ are $\kappa_k$-related to $a$ and lie over the same element $\underline b\in\E k$. Since by our inductive assumption $\kappa_k$ is a ZM, in the fiber $\T_{\underline b}\E k$ there can be at most one element $\kappa_k$-related to $a$, which implies \eqref{eqn:kappa_k_good}.

To prove properties $\eqref{lem:x}_{k+1}$ and $\eqref{lem:xi}_{k+1}$  one needs to apply the tangent functor to $\eqref{eqn:kappa_A}_k$ and $\eqref{eqn:kappa_B}_k$, compose it with $\kappa_{\E k}$ and observe that restrictions to $\TT{k+1} E$ and $\T\E{k+1}$ project to analogous restrictions on lover levels. For example, for \eqref{lem:x} consider the following diagram (the last square on the right is due to property \eqref{lem:iv}):
$$\xymatrix{
\TT {k+1} E\ar@{^{(}->}[r]\ar@{-->}[d]^{\tau^{k+1}_{k,E}} & \T\TT k E\ar@{-|>}[rr]^{\T\kappa_k}\ar[d]^{\T\tau^k_{k-1,E}}&& \T\T \E k \ar[rr]^{\kappa_{\E k}}\ar[d]^{\T\T\tauE k{k-1}}&& \T\T\E k\ar[d]^{\T\T\tauE k{k-1}}&\ar@{_{(}->}[l]\T \E{k+1}\ar@{-->}[d]^{\T\tauE{k+1}k}\\
\TT k E\ar@{^{(}->}[r]&\T\TT {k-1} E\ar@{-|>}[rr]^{\T\kappa_{k-1}}&&\T\E{k-1} \ar[rr]^{\kappa_{\E{k-1}}}&& \T\E{k-1}&\ar@{_{(}->}[l]\T\E k,}
$$ which gives
$$\xymatrix{
\TT{k+1} E \ar@{-|>}[rr]^{\kappa_{k+1}}\ar[d]^{\tau^{k+1}_{k,E}}&& \T \E{k+1}\ar[d]^{\T\tau{k+1}k}\\
\TT k E \ar@{-|>}[rr]^{\kappa_{k+1}} && \T \E k.}
$$
Property \eqref{lem:xi} can be proved analogously with help of \eqref{eqn:kappa_kM}.
\end{proof}

\paragraph{Another description of $\kappa_k$.}

The canonical ZM $\kappa_k$ described in Theorem \ref{thm:prop_Ek} can be also defined more directly.

\begin{prop}\label{prop:kappa_k_direct} The ZM $\kappa_k$ can be equivalently defined as: $\kappa_1=\kappa$; $\kappa_{k+1}=\left(\kappa_{k,E}\circ\TT{k}\kappa\right)\cap\left(\TT {k+1} E\times\T\E{k+1}\right)$:
$$\xymatrix{
  \TT{k}\T E\ar@{-|>}[rr]^{\TT{k}\kappa}&& \TT {k}\T E \ar[rr]^{\kappa_{k,E}}&& \T\TT{k}E \\
\TT {k+1} E\ar@{_{(}->}[u]\ar@{--|>}[rrrr]^{\kappa_{k+1}}&&&& \T \E{k+1}.\ar@{_{(}->}[u]}
$$\end{prop}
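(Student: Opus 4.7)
The plan is induction on $k$. For $k=1$ the proposition holds tautologically: $\kappa_{1,E}=\kappa_E$ and $\TT 1\kappa=\T\kappa$, so the alternative formula coincides with the original definition $\kappa_2=(\kappa_E\circ\T\kappa)\cap(\TT 2 E\times\T\E 2)$.

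For the inductive step, assume that $\kappa_{k+1}=(\kappa_{k,E}\circ\TT k\kappa)\cap(\TT{k+1}E\times\T\E{k+1})$. I will use three geometric ingredients. First, formula \eqref{eqn:kappa_kM} applied with $M:=E$ yields $\kappa_{k+1,E}=\kappa_{\TT k E}\circ\T\kappa_{k,E}$ on $\TT{k+1}\T E\subset\T\TT k\T E$. Second, viewing $\kappa$ as a submanifold of $\T E\times\T E$ and its lifts as Weil prolongations thereof, the canonical inclusion $\TT{k+1}N\subset\T\TT k N$ applied to $N=\T E$ gives $\TT{k+1}\kappa=\T\TT k\kappa$ on $\TT{k+1}\T E$. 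Third, for any embedded submanifold $N\hookrightarrow P$ the canonical flip restricts cleanly, $\kappa_P|_{\T\T N}=\kappa_N$; applied to the embedding $\E{k+1}\hookrightarrow\TT k E$ furnished by Theorem \ref{thm:prop_Ek}, this yields $\kappa_{\TT k E}|_{\T\T\E{k+1}}=\kappa_{\E{k+1}}$.

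Combining the first two ingredients,
\[
\kappa_{k+1,E}\circ\TT{k+1}\kappa = \kappa_{\TT k E}\circ\T\kappa_{k,E}\circ\T\TT k\kappa = \kappa_{\TT k E}\circ\T\bigl(\kappa_{k,E}\circ\TT k\kappa\bigr),
\]
as relations from $\TT{k+1}\T E$ to $\T\TT{k+1}E\subset\T\T\TT k E$. The inductive hypothesis identifies the bracketed composition restricted to $\TT{k+1}E\times\T\E{k+1}$ with $\kappa_{k+1}$; tangent-prolongating this identification and invoking cleanness of the intersection, $\T(\kappa_{k,E}\circ\TT k\kappa)$ restricted to $\TT{k+2}E\times\T\E{k+2}\subset\T\TT{k+1}E\times\T\T\E{k+1}$ agrees with $\T\kappa_{k+1}$. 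Since $\T\kappa_{k+1}$ maps into $\T\T\E{k+1}$, the third ingredient permits replacing the outer $\kappa_{\TT k E}$ by $\kappa_{\E{k+1}}$. Consequently
\[
(\kappa_{k+1,E}\circ\TT{k+1}\kappa)\cap(\TT{k+2}E\times\T\E{k+2})=(\kappa_{\E{k+1}}\circ\T\kappa_{k+1})\cap(\TT{k+2}E\times\T\E{k+2})=\kappa_{k+2},
\]
the last equality being \eqref{eqn:kappa_k_E} at index $k+2$.

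The main technical obstacle lies in justifying that tangent prolongation commutes with the clean intersections used to define $\kappa_{k+1}$ and $\kappa_{k+2}$: one must verify that $\TT{k+1}E\hookrightarrow\TT k\T E$ and $\T\E{k+1}\hookrightarrow\T\TT k E$ are clean embeddings transverse to $\kappa_{k,E}\circ\TT k\kappa$. This transversality is readily checked in the adapted graded coordinates of Theorem \ref{thm:prop_Ek} using the defining equations \eqref{eqn:char_Ek_bis} and \eqref{eqn:char_Ek} of $\E{k+1}$.
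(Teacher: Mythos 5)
Your proposal is correct and takes essentially the same route as the paper: induction based on the decomposition \eqref{eqn:kappa_kM} of the higher flip, the naturality of the canonical inclusions $\TT{k+1}N\subset\T\TT{k}N$, the restriction of canonical flips to submanifolds (your third ingredient, used in the paper to identify $\kappa_{\E{k}}$ inside $\kappa_{\TT{k-1}E}$), and the tangent prolongation of the inductive hypothesis. The only real difference is presentational: the paper exhibits both $\kappa_{k+1}$ and the alternative relation as restrictions of the single ambient relation $\kappa_{\TT{k-1}E}\circ\T\kappa_{k-1,E}\circ\T\TT{k-1}\kappa$ and reduces the verification to the commutativity of two diagrams of natural inclusions, whereas you run a chain of equalities and isolate the commutation of $\T$ with the defining intersections as an explicit cleanness/transversality check (which you assert but do not carry out; it is true, and is exactly the point the paper disposes of via those inclusion diagrams).
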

\begin{proof} We shall proceed by induction. Denote by   $\wt{\kappa}_{k}:\TT{k}E\relto \T \E{k+1}$ the relation defined in the assertion.
By definition, $\wt{\kappa}_1 = \kappa_1 = \kappa$.
Let us assume that $\wt{\kappa}_{k}=\kappa_{k}$ for a given $k$. Consequently $\kappa_{k+1}=\left(\kappa_{\E k}\circ \T\kappa_{k}\right)\cap\left(\TT{k+1} E\times\T\E{k+1}\right)$ is a restriction of relation $\kappa_{\E k}\circ\T\wt\kappa_k\subset\kappa_{\TT{k-1}E}\circ\T\left(\kappa_{k-1,E}\circ\TT{k-1}\kappa\right)$, i.e.,
$$\xymatrix{
\T\TT{k-1}\T E \ar@{-|>}[rr]^{\T\TT{k-1}\kappa} && \T\TT{k-1}\T E \ar[rr]^{\T\kappa_{k-1, E}} && \T\T \TT{k-1}E \ar[rr]^{\kappa_{\TT{k-1} E}} && \T\T \TT{k-1}E \\
\T\TT kE\ar@{-|>}[rrrr]^{\T\kappa_k=\T\wt\kappa_k}\ar@{_{(}->}[u] &&&&\T\T\E k\ar[rr]^{\kappa_{\E k}}\ar@{_{(}->}[u] &&\T\T\E k\ar@{_{(}->}[u]\\
\TT{k+1}E \ar@{_{(}->}[u]\ar@{--|>}[rrrrrr]^{\kappa_{k+1}}&&&&&&\T\E{k+1}. \ar@{_{(}->}[u]
}$$
On the other hand, since for every manifold $M$ the canonical flip $\kappa_{k,M}$ is a restriction of relation $\kappa_{\TT{k-1}M}\circ\T\kappa_{k-1,M}$ to $\TT k\T M\times\T\TT kM$ (see \eqref{eqn:kappa_kM}), also relation $\wt\kappa_{k+1}$ can be obtained as a restriction of the relation $\kappa_{\TT{k-1}E}\circ\T\kappa_{k-1,E}\circ\T\TT{k-1}\kappa$:
$$\xymatrix{
\T\TT{k-1}\T E \ar@{-|>}[rr]^{\T\TT{k-1}\kappa} && \T\TT{k-1}\T E \ar[rr]^{\T\kappa_{k-1, E}} && \T\T \TT{k-1}E \ar[rr]^{\kappa_{\TT{k-1} E}} && \T\T \TT{k-1}E \\
\TT k\T E\ar@{-|>}[rr]^{\TT k\kappa}\ar@{_{(}->}[u] &&\TT k\T E \ar[rrrr]^{\kappa_{k,E}}\ar@{_{(}->}[u] &&&&\T\TT kE \ar@{_{(}->}[u]\\
\TT{k+1}E \ar@{_{(}->}[u]\ar@{--|>}[rrrrrr]^{\wt\kappa_{k+1}}&&&&&&\T\E{k+1}. \ar@{_{(}->}[u]
}$$
To prove that $\kappa_{k+1}=\wt\kappa_{k+1}$ we need just to check that both restrictions coincide. That is, we need to check that the two diagrams below commute:
$$\xymatrix{
\T\TT{k-1}\T E &\TT k\T E\ar@{_{(}->}[l]\\
\T\TT kE \ar@{_{(}->}[u]&\TT{k+1} E\ar@{_{(}->}[u]\ar@{_{(}->}[l]}
\qquad\text{and}\qquad
\xymatrix{
\T\T\TT{k-1}E &\T\TT kE\ar@{_{(}->}[l]\\
\T\T\E k\ar@{_{(}->}[u]&\T\E{k+1}.\ar@{_{(}->}[u]\ar@{_{(}->}[l]
}$$
The first diagram is easy to check. The second is just the tangent lift of
$$\xymatrix{
\T\TT{k-1}E &\TT kE\ar@{_{(}->}[l]\\
**[l]\T\TT{k-1}E\cap\T\T\E{k-1}=\T\E k\ar@{_{(}->}[u]&**[r]\E{k+1}=\TT kE\cap\T\E k,\ar@{_{(}->}[u]\ar@{_{(}->}[l]
}$$
where all inclusions are natural. Obviously, two possible passages from bottom-right to top-left in this diagram are just two equivalent ways of expressing the canonical inclusion $\TT kE\subset\T\TT{k-1}E$ restricted to $\E{k+1}$.
\end{proof}

\paragraph{The dual morphism $\eps_k$.}

We know that (cf. Appendix \ref{app:ZM}) with a ZM of vector bundles one can associate its dual which is an  ordinary morphism of vector bundles. In particular, taking a ZM $\kappa_k:\TT kE\ra\T\E k$ we will obtain a vector bundle morphism
$$\xymatrix{
\TT\ast\E k\ar[rr]^{\eps_k} \ar[d]^{\tau^\ast_{\E k}}&&\TT k\E\ast\ar[d]^{\TT k\tau^\ast}\\
\E k\ar[rr]^{\rho_k}&& \TT kM
}$$
defined via the following equality
\begin{equation}\label{eqn:kappa_eps_E_k}
\<\Psi,B>_{\tau_{\E k}}=\<\eps_{k}(\Psi),A>_{\TT k\tau},
\end{equation}
where $\Psi\in\T_{\overline b}^\ast\E k$, $A\in \TT k E$ lays over $\rho_k(\overline b)\in\TT kM$ and $B\in\kappa_k(A)\cap\T_{\overline b}\E k$. The construction of $\eps_{k}$ and its relation to both canonical parings can be schematically described via the following diagram:
$$\xymatrix{&**[l] B\in\T\E k \ar@{..>}[ld] &&**[r]\TT kE\ni A\ar@{..>}[rd]\ar@{-|>}[ll]_{\kappa_{k}}&\\
\R &\<\cdot,\cdot>_{\tau_{\E k}}&&\<\cdot,\cdot>_{\TT k\tau}&\R\\
&**[l]\Psi\in\T^\ast\E k  \ar@{..>}[lu]\ar@{->}[rr]^{\eps_{k}} &&**[r]\TT k\E\ast\ni\eps_k(\Psi).\ar@{..>}[ur]&}$$

\begin{rem}\label{rem:eps_k_direct}
As corollary from Proposition  \ref{prop:kappa_k_direct} we obtain the following direct characterization of the dual morphism $\eps_k$:
\begin{equation}\label{eqn:eps_k_direct}
\xymatrix{\TT{k-1}\T\E\ast \ar@{-|>}[d]^{\left(\iota^{k-1,1}_E\right)^\ast}&&\TT{k-1}\TT\ast E \ar[ll]_{\TT{k-1}\eps} &&\TT\ast\TT{k-1} E\ar[ll]_{\eps_{k-1,E}}\ar@{-|>}[d]^{\TT\ast \iota^{k-1,1}}\\
\TT k\E\ast &&&&\TT\ast\E k. \ar@{-->}[llll]_{\eps_k}
}\end{equation}
Indeed, from Proposition \ref{prop:kappa_k_direct} we know that $\kappa_k$ is the restriction of $\kappa_{k-1,E}\circ\TT{k-1}\kappa$. By \eqref{eqn:dual_ZM}, its dual $\eps_k$ is the factorization of the dual of $\kappa_{k-1,E}\circ\TT{k-1}\kappa$, which is precisely $\TT{k-1}\eps\circ\eps_{k-1,E}$. Note that the fact that $\TT{k-1}\eps\circ\eps_{k-1,E}$ factorizes to a vector bundle morphism is non-trivial. It is equivalent to the fact that $\kappa_k$ is a ZM.
\end{rem}

\subsection{Integration}\label{ssec:integration}

Now we are going to study the procedure of lifting admissible paths and homotopies from a higher algebroid to a Lie groupoid. Our results base on similar results on Lie algebroids \cite{JG_MJ_pmp_2011}.

\paragraph{Integrable higher algebroids.}
It is an interesting question to compare higher Lie algebroids of a Lie groupoid $\G$ studied in Section \ref{sec:red_lie} with higher algebroids associated with a given almost Lie algebroid considered in Subsection \ref{ssec:high_alg}. As one may have expected both constructions coincide if the initial algebroid $(\tau:E\ra M,\kappa)$ is a Lie algebroid of a Lie groupoid $\G$.

\begin{thm}\label{thm:A_E} If $(\tau:E\ra M,\kappa)$ is a Lie algebroid of a Lie groupoid $\G$, then the higher algebroid $(\A^k(\G),\kappa_k)$ of $\G$   equals the higher algebroid $(\E k,\kappa_k)$ associated with $E$.
Also the constructions of maps $\tau^k$, $\tauE k{k-1}$ and $\rho_k$  from Subsection \ref{ssec:high_alg} coincide with analogous constructions from Section \ref{sec:red_lie}.
\end{thm}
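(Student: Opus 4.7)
The plan is to proceed by induction on $k \geq 1$, simultaneously establishing three assertions: (a) the identification $\A^k(\G) = \E k$ as subsets of $\TT{k-1}E$ via the inclusions $\iota^{k-1,1}$, (b) agreement of the projections $\tauE k{k-1}$ and the anchors $\rho_k$ under this identification, and (c) agreement of the canonical relations $\kappa_k$. The base case $k = 1$ is immediate: by definition $\A^1(\G) = \A(\G) = E = \E 1$, with $\rho_1 = \rho$ and $\kappa_1 = \kappa$.

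For the inductive step of (a), I would exploit the characterization $\E k = (\TT{k-1}\rho)^{-1}(\TT kM)$ provided by Theorem~\ref{thm:prop_Ek}\eqref{lem:vii}. Given $\jet k_0 \gamma \in \A^k(\G)$ with $\gamma$ an $\alpha$-curve based at a point of $M$, a direct computation at the level of representatives---using $\rho = \T\beta|_{\A(\G)}$ and the identity $\beta(\gamma(t+s)\gamma(t)^{-1}) = \beta(\gamma(t+s))$---yields
\[
\TT{k-1}\rho\bigl(\iota^{k-1,1}(\jet k_0\gamma)\bigr) \;=\; \jet{k-1}_{t=0}\,\jet 1_{s=0}\,\beta(\gamma(t+s)) \;=\; \iota^{k-1,1}_M\bigl(\jet k_0(\beta\circ\gamma)\bigr) \;\in\; \TT kM.
\]
Hence $\iota^{k-1,1}(\A^k(\G)) \subseteq \E k$. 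Both sides are graded subbundles of $\TT{k-1}E \to M$ of identical rank $(r,\ldots,r)$---by the construction in Section~\ref{sec:red_lie} for $\A^k(\G)$, and by Theorem~\ref{thm:prop_Ek}\eqref{lem:i} for $\E k$---and the inclusion $\iota^{k-1,1}$ is a morphism of graded bundles. Since a fiberwise injective graded-bundle morphism between graded bundles of equal rank is necessarily an isomorphism, equality $\A^k(\G) = \E k$ follows.

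The same computation establishes (b): the anchor $\rho_k$ defined in Section~\ref{sec:red_lie} as $\TT k\beta$ factored through $\RR^k$ coincides under the identification with $\TT{k-1}\rho|_{\E k}$, which is the formula in Theorem~\ref{thm:prop_Ek}\eqref{lem:ix}; and in both settings $\tauE k{k-1}$ is the restriction of $\tau^{k-1}_{k-2,E}: \TT{k-1}E \to \TT{k-2}E$ (by~\eqref{eqn:red_tower} on the groupoid side, and by Theorem~\ref{thm:prop_Ek}\eqref{lem:iii} on the abstract side). For (c), both the groupoid-side definition \eqref{eqn:kappa_k_A_1} and the abstract-side definition (Definition~\ref{def:Ek}) give $\kappa_k$ by the same inductive formula $\kappa_{k+1} = (\kappa_{\E k}\circ \T\kappa_k) \cap (\TT{k+1}E \times \T\E{k+1})$, starting from $\kappa_1 = \kappa$. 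Given (a) and the inductive hypothesis $\kappa_{k-1}^{\G} = \kappa_{k-1}^{E}$, the two formulas produce identical subsets of $\TT{k+1}E \times \T\E{k+1}$.

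The main obstacle is the reverse inclusion $\E k \subseteq \iota^{k-1,1}(\A^k(\G))$ at the inductive step; the dimension/graded-bundle argument handles it cleanly but relies on recognizing both sides as graded subbundles of $\TT{k-1}E$ of the same rank. A more constructive alternative---explicitly integrating an abstract ``$k$-admissible'' element of $\E k$ to an $\alpha$-curve in $\G$, using the integrability of $\A(\G)$---would be instructive but requires additional machinery for lifting higher-order admissible paths that is not yet available at this point in the paper.
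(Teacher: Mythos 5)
Your proposal is correct, and its overall architecture is the same as the paper's: show that $\A^k(\G)$ sits inside $\E k$, upgrade the inclusion to equality by comparing graded (homogeneity) bundles of the same rank $(r,\ldots,r)$ via an inclusion preserving the homogeneity structures, and then observe that the inductive definitions \eqref{eqn:kappa_k_A_1} and \eqref{eqn:kappa_k_E} of $\kappa_k$ become literally the same formula once the bundles are identified, which also forces the anchors $\rho_k$ (the base maps of the ZMs) to agree. Where you genuinely differ is in how the inclusion is produced: the paper handles $k=2$ through the characterization of $\E 2$ as the $\kappa$-invariant part of $\T E$ together with the reduction diagram \eqref{eqn:kappa_red_group}, and for $k\geq 3$ it invokes the inclusion \eqref{eqn:A_k+1} from Proposition \ref{prop:natural_inclusions_of_Ak} plus the inductive hypothesis to land in $\T\E{k-1}\cap\TT{k-1}E=\E k$; you instead use the anchor characterization $\E k=(\TT{k-1}\rho)^{-1}(\TT kM)$ of Theorem~\ref{thm:prop_Ek}~\eqref{lem:vii} and a one-line computation on representatives using $\rho=\T\beta|_{\A(\G)}$ and the right-invariance of $\beta$. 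Your route treats all $k\geq 2$ uniformly, does not need the inductive hypothesis for the set-level identification, and directly exhibits the agreement of the anchors as a by-product; the paper's route stays closer to the defining formula $\E{k+1}=\T\E k\cap\TT kE$ and so makes the comparison of the two inductive schemes more immediate. One point you should make explicit in your step (c): the groupoid-side restriction defining $\kappa_{k+1}$ is taken inside $\T\A^{k+1}(\G)$ with $\A^{k+1}(\G)$ embedded in $\T\A^k(\G)$ via $\iota^{1,k}$, whereas your identification $\A^{k+1}(\G)=\E{k+1}$ is made inside $\TT kE$ via $\iota^{k,1}$; to conclude that the two cut out the same subset of $\T\E k$ you need the compatibility of these two inclusions, i.e.\ the commutativity of diagram \eqref{eqn:inclusions_of_AkG} in Proposition \ref{prop:natural_inclusions_of_Ak} (with $k$ shifted by one). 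This is proved in the paper (it is also what underlies \eqref{eqn:A_k+1}), so it is a missing citation rather than a gap.
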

\begin{proof} It amounts to check that higher Lie algebroids of a Lie groupoid $\A^k(\G)$ (and the considered maps) are constructed from $\A(\G)$ in the same way as higher algebroids $\E k$ (and the corresponding maps) are constructed from $E$.

The fact that $\A^2(\G)$ is the set of $\kappa_1$-invariant elements in $\T\A(\G)$ is equivalent to the commutativity of diagram \eqref{eqn:kappa_red_group} which was proved in Section \ref{sec:red_lie}. For $k\geq 3$ from \eqref{eqn:A_k+1}, using the inductive argument we get
$$
\A^k(\G) \subseteq \T E^{k-1} \cap \T^{k-1} E = E^k.
$$
But both $\A^k(\G)$ and $E^k$ are homogeneity bundles of the same rank $(r, \ldots, r)$, where $r$ is the rank of $E$, and the inclusion above preserves the homogeneity structures. Therefore, $\A^k(\G) = E^k$
in agreement with \eqref{eqn:Ek}. It is obvious that the constructions of $\tau^k$ and $\tauE k{k-1}$ coincide in the case of $\A^k(\G)$ and in the abstract case.

Finally the inductive definitions of $\kappa_k$ given by \eqref{eqn:kappa_k_A_1} and \eqref{eqn:kappa_k_E} coincide in both cases. This implies that also the notion of $\rho_k$, which is the base morphism of ZM $\kappa_k$, agrees in both cases.
\end{proof}

Higher algebroids considered above will be called \emph{integrable}. It follows that the tower $\E k$ is integrable if and only if $(\E ,\kappa)$ is an integrable algebroid in the usual sense.

\begin{ex} As a particular case of Theorem \ref{thm:A_E}, observe that 
higher Atiyah algebroids $(\A^k(\Gamma_P)=\TT kP/G,\kappa_k)$ considered in Proposition \ref{prop:atiyah_alg} are higher algebroids associated with the Atiyah algebroid $(\A^1(\G)=\T P/G,\kappa_1)$. 
\end{ex}


\paragraph{Admissible paths.}
\begin{df}\label{def:adm_path} Consider a smooth path $a^k(t)\in \E k$, with $\E{k-1}$-projection $a^{k-1}(t)=\tauE k{k-1}\circ a^k(t)$ and base projection $\gamma(t)\in M$. Path $a^k(t)$ is called \emph{admissible} if and only if for every $t$
\begin{itemize}
\item $\rho_k(a^k(t))=\jet k\gamma(t)$;
\item $a^k(t)\in\E k\subset\T\E{k-1}$ is the tangent lift of $a^{k-1}(t)$.
\end{itemize}
The set of admissible paths $a^k:[t_0,t_1]\ra\E k$ will be denoted by $\Adm([t_0,t_1],\E k)$.
\end{df}

\begin{rem}\label{rem:admissible}
Observe that the admissibility of $a^k(t)\in \E k$ can be characterized inductively:
\begin{itemize}
\item $a^k(t)$ is the tangent lift of $a^{k-1}(t)$;
\item $a^{k-1}(t)$ is admissible;
\end{itemize}
where admissibility on $\E 1$ coincides with the standard notion of admissibility (i.e. $\rho\circ a(t)=\jet 1\gamma(t)$).

It follows that $a^k(t)=\jet {k-1}a^1(t)$, where $a^1(t)$ is an $\E 1$ projection of $a^k(t)$.
\end{rem}

Observe that in the standard case of higher tangent bundle $\E k=\TT kM$ admissible paths are just $\thh{k}$ tangent lifts of base paths $a^k=\jet k\gamma(t)$.
Now we will show that, in general, admissible curves on an integrable higher algebroid come from true curves on a corresponding groupoid.

\begin{thm}\label{thm:int_paths}
Let $\A^k(\G)$ be a higher Lie algebroid of a Lie groupoid $\G$. Fix an element $g_0\in\G$ and the corresponding $\alpha$-fiber $\G_{\alpha(g_0)}$.

The reduction map
$$g(t)\longmapsto a^k(t):=\T^kR_{g^{-1}(t)}(\jet k g(t))=\RR^k(\jet kg(t))$$
establishes a 1-1 correspondence between:
\begin{itemize}
\item smooth curves $g:[t_0,t_1]\ra \G_{\alpha(g_0)}$ such that $g(t_0)=g_0$,
\item admissible paths $a^k:[t_0,t_1]\ra\A^k(\G)$ such that $\gamma(t_0)=\beta(g_0)$.
\end{itemize}
\end{thm}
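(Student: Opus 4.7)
\medskip\noindent\textbf{Proof plan.}
The plan is to reduce the statement to the well-known $k=1$ case (standard integration of admissible paths in Lie algebroids, e.g.\ \cite{JG_MJ_pmp_2011}) by exploiting Remark~\ref{rem:admissible}, which identifies an admissible path $a^k(t)$ with the $(k-1)$-jet prolongation $\jet{k-1}a^1(\cdot)$ of its $\A(\G)$-projection. First I would check that the map $g(t)\mapsto a^k(t)=\RR^k(\jet k g(t))$ is well-defined. Since $g$ lies in the $\alpha$-fiber $\G_{\alpha(g_0)}$, the jet $\jet k g(t)$ indeed belongs to $\TT k\G^\alpha$. The basepoint condition $\gamma(t_0)=\beta(g_0)$ and the anchor condition $\rho_k(a^k(t))=\jet k\gamma(t)$ follow directly from the commutative diagrams \eqref{eqn:A_k} and \eqref{eqn:rho_k_A}, using $\TT k\beta(\jet k g(t))=\jet k(\beta\circ g)(t)$.

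Next I would establish the key naturality identity that ties the $\thh{k}$-order reduction to the $\st{1}$-order one. Under the inclusion $\iota^{k-1,1}:\A^k(\G)\hookrightarrow\TT{k-1}\A(\G)$ of Proposition~\ref{prop:natural_inclusions_of_Ak}, I claim the square
$$\xymatrix{
\TT k\G^\alpha \ar[rr]^{\RR^k} \ar@{^{(}->}[d] && \A^k(\G) \ar@{^{(}->}[d]^{\iota^{k-1,1}} \\
\TT{k-1}\T\G^\alpha \ar[rr]^{\TT{k-1}\RR^1} && \TT{k-1}\A(\G)
}$$
commutes, which is a direct check on representatives using the defining formulas $\RR^k(\jet k_{s=t}g(s))=\jet k_{s=t}g(s)g(t)^{-1}$ and $\iota^{k-1,1}(\jet k_0\gamma)=\jet 1_{r=0}\jet{k-1}_{s=0}\gamma(r+s)\gamma(r)^{-1}$. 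Writing $\jet k g(t)=\jet{k-1}_t\dot g(\cdot)$ and setting $a^1(s)=\RR^1(\dot g(s))$, the diagram then gives $\iota^{k-1,1}(a^k(t))=\jet{k-1}_t a^1(\cdot)$. Combined with the standard fact that $a^1$ is admissible, Remark~\ref{rem:admissible} confirms that $a^k$ is admissible as well.

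For bijectivity, invoke Remark~\ref{rem:admissible} once more: an admissible path $a^k(t)\in\A^k(\G)$ is uniquely recovered from its projection $a^1(t)=\tauE k 1\circ a^k(t)\in\A(\G)$ via $(k-1)$-jet prolongation. Hence it suffices to prove the theorem for $k=1$, which is the classical integration result for admissible curves on a Lie algebroid. The correspondence is realised by the time-dependent right-invariant ODE $\dot g(t)=\T R_{g(t)}(a^1(t))$ on $\G$, with initial condition $g(t_0)=g_0$; the right-hand side is tangent to $\alpha$-fibers (because $a^1(t)\in\A(\G)=\T\G^\alpha|_{\iota(M)}$ and $R_{g(t)}$ preserves $\alpha$-fibers), so the solution, if it exists, lies in $\G_{\alpha(g_0)}$. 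Global existence on $[t_0,t_1]$ uses the standard groupoid argument (no escape in finite time along an $\alpha$-fiber, because the vector field is right-invariant in an obvious sense), and uniqueness is immediate from standard ODE theory. The fiber-wise isomorphism property of $\RR^1:\T\G^\alpha\to\A(\G)$ then guarantees that $g\mapsto a^1(\cdot)=\RR^1(\dot g(\cdot))$ is a bijection onto admissible paths starting at $\beta(g_0)$, and composing with $a^1\mapsto \jet{k-1}a^1$ gives the desired bijection. The main technical obstacle is the verification of the naturality square above; once that is in place, the rest is a straightforward reduction to the $k=1$ case.
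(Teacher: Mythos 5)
Your proposal is correct and follows essentially the same route as the paper: verify the forward direction by checking that $\RR^k$ intertwines the relevant projections/inclusions (so that $\RR^k(\jet k g)$ is admissible and equals $\jet{k-1}a^1$ with $a^1=\RR^1(\dot g)$), then reduce to the $k=1$ integration theorem of \cite{JG_MJ_pmp_2011} via Remark~\ref{rem:admissible} and conclude by the uniqueness of the jet prolongation $a^k=\jet{k-1}a^1$. The only cosmetic difference is that you sketch the first-order ODE argument where the paper simply cites Theorem~4.5 of \cite{JG_MJ_pmp_2011}.
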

\begin{proof}
Consider a path $g(t)\in\G_{\alpha(g_0)}$ with $g(t_0)=g_0$ lying over $\gamma(t)=\beta(g(t))$. For $a^k(t)=\RR^k(\jet kg(t))$ we have $\rho_k(a^k(t))=\jet k\gamma(t)$ by \eqref{eqn:rho_k_A}. From \eqref{eqn:red_tower} and the fact that the canonical inclusion $\A^k(\G)\subset\T\A^{k-1}(\G)$ comes from $\TT k\G^\alpha\subset\T(\TT{k-1}\G^\alpha)$ we conclude that $a^k(t)$ is the tangent lift of $a^{k-1}(t)=\RR^{k-1}(\jet{k-1}g(t))$, which implies admissibility of $a^k(t)$.

The opposite side needs more attention. Consider $a^k(t)$ over $\gamma(t)$ as assumed, and denote its $\A(\G)$-projection by $a^1(t)$. By Remark \ref{rem:admissible}, $a^1(t)$ is an admissible curve in $\A(\G)$. In \cite{JG_MJ_pmp_2011} (Theorem 4.5) our assertion was proved for $k=1$. It follows that $a^1(t)$ uniquely integrates to a curve $g(t)$ lying over $\gamma(t)=\beta(g(t))$ such that
$$a^1(t)=\T R_{g^{-1}(t)}(\jet 1g(t)).$$
By the first part of the assertion
$$\wt a^k(t)=\RR^k(\jet kg(t))$$
is an admissible curve in $\A^k(\G)$ lying over $a^1(t)\in\A(\G)$. On the other hand $a^k(t)$ is also an admissible curve over $a^1(t)$. It follows from Remark \ref{rem:admissible} that $\wt a^k(t)=\jet{k-1} a^1(t)=a^k(t)$.
\end{proof}


\paragraph{Admissible homotopies.}
\begin{df}\label{def:adm_htp} Let $a^k_0,a^k_1:[t_0,t_1]\ra \E k$ be two smooth admissible paths. An \emph{admissible homotopy in $\E k$} between $a^k_0$ and $a^k_1$ are two families of smooth maps $a^k:[t_0,t_1]\times[0,1]\ra \E k$ and $b:[t_0,t_1]\times[0,1]\ra E$ over the same base map $\gamma:[t_0,t_1]\times[0,1]\ra M$ such that
\begin{itemize}
\item $a^k_0(\cdot)=a^k(\cdot,0)$ and $a^k_1(\cdot)=a^k(\cdot,1)$;
\item $t\mapsto a^k(t,s)$ for every $s\in[0,1]$ is admissible in $\E k$;
\item $s\mapsto b(t,s)$ for every $t\in[t_0,t_1]$ is admissible in $E$;
\item for every $(t,s)\in[t_0,t_1]\times[0,1]$ vectors $\jet k_tb(t,s)$ and $\jet 1_sa^k(t,s)$ are $\kappa_k$-related.
\end{itemize}
\end{df}

A particular example of an admissible homotopy in $\E k=\TT kM$ is given by a standard homotopy $\gamma(t,s)\in M$. In this case admissible paths $a^k_0(t)=\jet k\gamma(t,0)$ and $a^k_1(t)=\jet k\gamma(t,1)$ are homotopic via $a^k(t,s)=\jet k_t\gamma(t,s)$ and $b(t,s)=\jet 1_s\gamma(t,s)$.
We will now prove that, in general, on an integrable algebroid admissible homotopies come from the true homotopies on the corresponding groupoid.

\begin{thm}\label{thm:int_htps}
Let $\A^k(\G)$ be a higher Lie algebroid of a Lie groupoid $\G$. Fix an element $g_0\in\G$ and the corresponding $\alpha$-fiber $\G_{\alpha(g_0)}$.

The pair of maps
\begin{equation}\label{eqn:htp_red}
\begin{split}
&g(t,s)\longmapsto a^k(t,s):=\TT kR_{g^{-1}(t,s)}(\jet k_t g(t,s))=\RR^k(\jet k_tg(t,s))\\
&g(t,s)\longmapsto b(t,s):=\T R_{g^{-1}(t,s)}(\jet 1_s g(t,s))=\RR^1(\jet 1_sg(t,s))
\end{split}
\end{equation}
establishes a 1-1 correspondence between:
\begin{itemize}
\item smooth homotopies $g:[t_0,t_1]\times[0,1]\ra \G_{\alpha(g_0)}$ such that $g(t_0,0)=g_0$,
\item admissible homotopies $a^k:[t_0,t_1]\times[0,1]\ra\A^k(\G)$;  $b:[t_0,t_1]\times[0,1]\ra\A(\G)$ such that $\gamma(t_0,0)=\beta(g_0)$.
\end{itemize}
\end{thm}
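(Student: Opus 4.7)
The plan is to bootstrap the higher-order statement from its $k=1$ case, exploiting the description of $\kappa_k$ as a reduction of the canonical flip $\kappa_{k,\G}$ and the inductive characterization of admissibility in Remark \ref{rem:admissible}. Both directions of the correspondence will hinge on the $k=1$ case as a ``black box''; the one-parameter Theorem \ref{thm:int_paths} will handle admissibility of the separate $t$- and $s$-slices.

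\textbf{Forward direction.} Given $g(t,s)\in\G_{\alpha(g_0)}$ with $g(t_0,0)=g_0$, define $a^k$ and $b$ by \eqref{eqn:htp_red}. For each fixed $s$ the curve $t\mapsto g(t,s)$ lies in a single $\alpha$-fibre, so Theorem \ref{thm:int_paths} makes $a^k(\cdot,s)$ admissible in $\A^k(\G)$; symmetrically $b(t,\cdot)$ is admissible in $\A(\G)$. To verify the $\kappa_k$-compatibility, I will note that by the very definition of the canonical flip the elements $\jet k_t\jet 1_s g(t,s)\in\TT k\T\G^\alpha$ and $\jet 1_s\jet k_t g(t,s)\in\T\TT k\G^\alpha$ are $\kappa_{k,\G}$-related. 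Because the reduction maps satisfy $\TT k\RR^1(\jet k_t\jet 1_s g)=\jet k_t b$ and $\T\RR^k(\jet 1_s\jet k_t g)=\jet 1_s a^k$ (from the naturality of applying $\TT k$ and $\T$ to $\RR^1$ and $\RR^k$), the description of $\kappa_k$ as the reduction of $\kappa_{k,\G}$ in \eqref{eqn:kappa_k_A} forces $\jet k_t b$ and $\jet 1_s a^k$ to be $\kappa_k$-related.

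\textbf{Reverse direction.} Given an admissible homotopy $(a^k,b)$ in $\A^k(\G)$ with $\gamma(t_0,0)=\beta(g_0)$, I will project to level one by setting $a^1(t,s):=\tauE k 1(a^k(t,s))$. Iterating \eqref{eqn:kappa_k_A_tower}, the $\kappa_k$-relatedness of $\jet k_t b$ and $\jet 1_s a^k$ descends to $\kappa_1=\kappa$-relatedness of $\jet 1_t b$ and $\jet 1_s a^1$, and similarly admissibility of $a^k(\cdot,s)$ projects to admissibility of $a^1(\cdot,s)$. Thus $(a^1,b)$ is an admissible homotopy on the ordinary Lie algebroid $\A(\G)$, which by the classical integration of algebroid homotopies (the $k=1$ case, proved as in \cite{JG_MJ_pmp_2011} in analogy with Theorem \ref{thm:int_paths}) lifts to a unique smooth $g\colon[t_0,t_1]\times[0,1]\to\G_{\alpha(g_0)}$ with $g(t_0,0)=g_0$ whose reductions via \eqref{eqn:htp_red} recover $(a^1,b)$. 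To see that this $g$ also recovers $a^k$, apply the already-established forward direction to obtain $\wt a^k(t,s):=\RR^k(\jet k_t g(t,s))$, which is admissible in $\A^k(\G)$ and projects under $\tauE k 1$ to $a^1(\cdot,s)$. By Remark \ref{rem:admissible} an admissible path in $\A^k(\G)$ equals $\jet{k-1}$ of its $\A(\G)$-projection; applied to both $\wt a^k$ and $a^k$ this gives $\wt a^k=a^k$. Uniqueness of the constructed $g$ follows from the uniqueness already ensured at level one.

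\textbf{Main obstacle.} The only nontrivial ingredient is the classical $k=1$ integration of algebroid homotopies to groupoid homotopies, including the joint smoothness of $g$ in $(t,s)$; once that is in hand, the higher-order statement is essentially a projection argument combined with the characterization $a^k=\jet{k-1}a^1$, so no additional analytic work is required. A minor but necessary sanity check is that Definition \ref{def:adm_htp} reduces correctly under $\tauE k 1$, which is guaranteed by property \eqref{lem:x} of Theorem \ref{thm:prop_Ek}.
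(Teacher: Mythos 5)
Your proposal is correct and follows essentially the same route as the paper's proof: forward direction via Theorem \ref{thm:int_paths} for the slices plus the reduction description \eqref{eqn:kappa_k_A} of $\kappa_k$ applied to the $\kappa_{k,\G}$-related iterated jets, and reverse direction by descending through the tower \eqref{eqn:kappa_k_A_tower} to an admissible homotopy on $\A(\G)$, integrating it by Theorem 4.5 of \cite{JG_MJ_pmp_2011}, and then identifying $\wt a^k=a^k$ via the characterization $a^k=\jet{k-1}a^1$ of Remark \ref{rem:admissible}. No substantive differences from the paper's argument.
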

\begin{proof}
Consider a homotopy $g(t,s)\in\G_{\alpha(g_0)}$ such that $g(t_0,0)=g_0$ lying over $\gamma(t,s)=\beta(g(t,s))$. By Theorem \ref{thm:int_paths}, paths $t\mapsto a^k(t,s)$ and $s\mapsto b(t,s)$ defined by \eqref{eqn:htp_red} are admissible in $\E k$ and $E$, respectively.

Moreover, vectors $\jet k_t\jet 1_sg(t,s)$ and $\jet 1_s\jet k_tg(t,s)$ are $\kappa_{k,\G}$-related. By \eqref{eqn:kappa_k_A}, vectors $\jet k_t b(t,s)$ and $\jet 1_sa^k(t,s)$ are $\kappa_k$-related, and hence $a^k(t,s)$ and $b(t,s)$ form an admissible homotopy in $\E k$ in the sense of Definition \ref{def:adm_htp}.

Conversely, consider an admissible homotopy $a^k(t,s)$, $b(t,s)$ in $\A^k(\G)$ over $\gamma(t,s)$. Due to \eqref{eqn:rho_k_A_tower} and \eqref{eqn:kappa_k_A_tower}, $a^{k-1}(t,s)=\tauE k{k-1}(a^k(t,s))$ and $b(t,s)$ form an admissible homotopy in $\A^{k-1}(\G)$ over $\gamma(t,s)$. By repeating this argument we get that $a^1(t,s)$, $b(t,s)$ is an admissible homotopy in $\A(\G)$ over $\gamma(t,s)$. Now we can use results of \cite{JG_MJ_pmp_2011}. Theorem 4.5 in this paper states that such an admissible homotopy in $\A(\G)$ integrates to a homotopy $g(t,s)\in\G_{\alpha(g_0)}$ lying over $\gamma(t,s)=\beta(g(t,s))$. By the first part of the assertion $g(t,s)$ reduces to an admissible homotopy $\wt a^k(t,s)$, $b(t,s)$ in $\A^k(\G)$ (with $\wt a^1(t,s)=a^1(t,s)$). By repeating the argument used in the proof of Theorem \ref{thm:int_paths} we get $a^k(t,s)=\wt a^k(t,s)$. \end{proof}

\begin{rem}\label{rem:red_htp}
From Definition \ref{def:adm_htp} and Theorem \ref{thm:int_htps} we conclude that a vector $\jet 1_s\big|_{s=0}a^k(t,s)$ which is $\kappa_k$-related to a vector $\jet k_t b(t,0)$ plays a role of an \emph{infinitesimal homotopy} (or a \emph{variation}) of an admissible path $a^k(t,0)$. This point of view will be further developed in the next section where vectors of this kind (called \emph{admissible variations}) will be interpreted as reductions of the true variations (i.e., variations generated by homotopies) from the level of a groupoid to a corresponding algebroid (cf. Lemma \ref{lem:int_variations}).
\end{rem}

\newpage
\section{Variational calculus on higher algebroids}\label{sec:gen_EL}

In this section we formulate a variational problem on a higher algebroid.
Due to Theorem \ref{thm:red_var_prob} such problems appear naturally as reductions of  higher-order variational problems on Lie groupoids with Lagrangians invariant with respect to the groupoid multiplication. Later we derive generalized Lagrangian formalism for such problems. The presented theory may be understood as a universal geometric scheme for higher-order variational problems which covers simultaneously both standard and reduced systems.


\subsection{Reduction of variational problems}\label{ssec:red_var_prob}

In this part we will show that variational problems on higher algebroids appear naturally as reductions of higher-order invariant problems on Lie groupoids.

\paragraph{Invariant problems on Lie groupoids.}
Consider a Lie groupoid $(\G,\alpha, \beta, \cdot)$ and a Lagrangian function $\wt L:\TT k\G^\alpha\ra\R$. We assume that $\wt L$ is invariant with respect to the action $g\mapsto\TT k R_g$.
Denote by $\Adm^\alpha([t_0,t_1],\G)$ the set of curves  $g:[t_0,t_1]\ra\G$ lying in a single $\alpha$-leaf. With any $g(\cdot)\in\Adm^\alpha([t_0,t_1],\G)$ we can associate an action
$$g(\cdot)\longmapsto S_{\wt L}(\jet kg)=\int_{t_0}^{t_1}\wt L(\jet kg(t))\dd t.$$
Admissible variations of $\jet kg(t)$ are defined by the standard formula
\begin{equation}\label{eqn:variation_groupoid}
\del\jet k g(t)=\kappa_{k,\G}(\jet k\del g(t))
\end{equation}
for $\del g(t)\in\T_{g(t)}\G^\alpha$. Observe that variations in the above form are precisely variations generated by homotopies (cf. \cite{MJ_MR_higher_var_calc_2013}) in $\G^\alpha$. That is, given a homotopy $\chi(t,s)\in \G^\alpha$ such that $\chi(t,0)=g(t)$, the associated variation $\del \jet k g(t):=\jet 1_{s=0}\jet k_t \chi(t,s)$ is of the form \eqref{eqn:variation_groupoid} where $\del g(t)=\jet 1_{s=0}\chi(t,s)$. Conversely, every variation of the form \eqref{eqn:variation_groupoid} can be realized by some homotopy $\chi(t,s)\in\G^\alpha$.

We would like to consider the following problem:
\begin{problem}[Invariant problem on a Lie groupoid with fixed end-points] \label{prob:var_groupoid}
Find all curves $g(t)\in\Adm^\alpha([t_0,t_1],\G)$ such that the following differential of the action
$$\<\dd S_{\wt L}(\jet kg),\del\jet kg>:=\int_{t_0}^{t_1}\<\dd \wt L(\jet kg(t)),\del\jet kg(t)>\dd t$$
vanishes for every variation $\del\jet kg$ such that $\jet {k-1}\del g(t_0)$ and $\jet{k-1}\del g(t_1)$ are null vectors in  $\TT {k-1}\tau_{\T\G^\alpha}:\TT{k-1}\T\G^\alpha\ra\TT{k-1}\G^\alpha$.
\end{problem}

\begin{ex}\label{ex:inv_prob_group}
Note that in a simple case of a pair groupoid ($\G=M\times M$ with multiplication $(x,y)(y,z)=(x,z)$ and base projections $\alpha(x,y)=x$; $\beta(x,y)=y$) we recover the standard higher-order variational problem with fixed end-points on a manifold $M$. Indeed, in this case every $\alpha$-leaf is canonically isomorphic to $M$ and an invariant Lagrangian $\wt L:\TT k\G^\alpha\approx M\times\TT kM\ra\R$ must be of the form $\wt L(x,v^k)=L(v^k)$ for some $L:\TT kM\ra\R$.

Similarly, when $\Gamma_P$ is the Atiyah groupoid associated with a principal $G$-bundle $p:P\ra M$ (see the paragraph containing Proposition \ref{prop:atiyah_alg}), then each $\alpha$-leaf of $\Gamma_P$ is canonically isomorphic to $P$ and thus every invariant Lagrangian $\wt L:\TT k\Gamma_P^\alpha\ra\R$ is determined by its restriction to a single $\alpha$-leaf, which is a $G$-invariant function $\TT kP$. Therefore we recover a $G$-invariant variational problem with fixed end points on  $P$ (cf. Example \ref{ex:main}).
\end{ex}


\paragraph{Reduction to higher algebroids.}
Problem \ref{prob:var_groupoid} reduces to an equivalent problem on the associated higher Lie algebroid $\A^k(\G)$. It is defined as follows.

Function $\wt L$ can be reduced to a function $L:\A^k(\G)\ra\R$ via the reduction map $\RR^k$:
\begin{equation}\label{eqn:red_lagr}
\xymatrix{
\TT k\G^\alpha \ar[rr]^{\wt L}\ar[d]_{\RR^k}&&\R\\
\A^k(\G)\ar@{-->}[rru]_{\exists !\, L}}.
\end{equation}

With $L$ we can associate an action
$$a^k(\cdot)\longmapsto S_L(a^k)=\int_{t_0}^{t_1}L(a^k(t))\dd t,$$
defined on the space of admissible paths $a^k(\cdot)\in \Adm([t_0,t_1],\A^k(\G))$.

Let $a^k(t)\in\E k$ over $\gamma(t)\in M$ be such a path and consider an arbitrary path $b(t)\in \A(\G)_{\gamma(t)}$. There exists a unique path $\del_ba^k(t)\in\T_{a^k(t)}\E k$ which is $\kappa_k$-related to $\jet k b(t)\in\TT k \A(\G)$
\begin{equation}\label{eqn:variation}
\{\del_ba^k(t)\}=\kappa_k(\jet kb(t))\cap\T_{a^k(t)}\A^k(\G).
\end{equation}
Such $\del_ba^k(t)$ will be called an \emph{admissible variation} of $a^k(t)$ \emph{generated} by $b(t)$. This definition, which may seem artificial at the first glance, should be understood as an infinitesimal version of Definition \ref{def:adm_htp} of a higher-algebroid homotopy (cf. Remark \ref{rem:red_htp}). Another justification will be provided by Lemma \ref{lem:int_variations} which shows that admissible variations are precisely reductions of standard variations (i.e., variations generated by homotopies) from the level of a groupoid to the corresponding algebroid.

The differential of the action $S_L$ in the direction of $\del_ba^k(t)$ is defined in an obvious way:
$$\<\dd S_L(a^k),\del_b a^k>:=\int_{t_0}^{t_1}\<\dd L(a^k(t)),\del_ba^k(t)>_{\tau_{\A^k(\G)}}\dd t.$$

Now we are ready to formulate a version of a \emph{variational problem} on a higher algebroid $\A^k(\G)$:
\begin{problem}[Variational problem on higher algebroid with fixed end-points.]\label{prob:red_algebroid} For a given Lagrangian $L:\A^k(\G)\ra\R$ find all curves $a^k\in\Adm([t_0,t_1],\A^k(\G))$ such that
$$\<\dd S_L(a^k),\del_ba^k>=0$$
for every generator $b(t)\in E_{\gamma(t)}$ such that  $\jet{k-1}b(t_0)$ and $\jet{k-1}b(t_1)$ are null vectors in $\TT{k-1}\tau:\TT{k-1}\A^1(\G)\ra \TT{k-1} M$.
\end{problem}
Let us comment the above formulation. We prefer to understand a variational problem as the study of the behavior of the differential of the action functional in the directions of admissible variations (differential approach), rather than as the comparison of the values of the action on nearby admissible trajectories (integral approach). Hence the solutions of the problem are only critical, not extremal, points of the action functional. Although the two-side passage between the differential and the integral approach is possible due to the homotopical nature of admissible variations (cf. Remark \ref{rem:red_htp}), the differential approach shows its advantages in the presence of constraints. The philosophy of understanding a variational problem as studying the differential of the action restricted to sets of admissible trajectories and admissible variations allows to treat unconstrained and constrained variational problems of different kind in a unified way (see eg. \cite{KG_JG_var_calc_alg_2008, Gracia_Martin_Munos_2003, MJ_WR_nh_vac_2013}). Note also that a variational problem on an abstract higher algebroid (which will be considered in the next paragraph) can be formulated only in differential terms, as the proper integral object (groupoid) may simply not exist.

Problem \ref{prob:red_algebroid} is called a \emph{reduction} of Problem \ref{prob:var_groupoid}. The following result justifies this name.
\begin{thm}\label{thm:red_var_prob}
Problems \ref{prob:var_groupoid} and \ref{prob:red_algebroid} (with Lagrangians $\wt L$ and $L$ related by \eqref{eqn:red_lagr}) are equivalent. More precisely,
\begin{itemize}
\item Let $g(t)$ be a solution of Problem \ref{prob:var_groupoid}. Then $a^k(t)=\RR^k(\jet kg(t))$ is a solution of Problem \ref{prob:red_algebroid}.
\item Conversely, let $a^k(t)$ be a solution of Problem \ref{prob:red_algebroid}. Then any curve $g(t)\in\G^\alpha$ integrating $a^k(t)$ in the sense of Theorem \ref{thm:int_paths} is a solution of Problem \ref{prob:var_groupoid}.
\end{itemize}
\end{thm}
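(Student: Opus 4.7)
The plan is to establish the equivalence by showing that the reduction map $\RR^k$ induces a correspondence between the variational data on both sides which preserves the first variation of the action. Since the Lagrangians are related by $\wt L = L \circ \RR^k$ and $\RR^k(\jet k g(t)) = a^k(t)$ pointwise, the two actions coincide:
\[
S_{\wt L}(\jet k g) = \int_{t_0}^{t_1} \wt L(\jet k g(t))\,\dd t = \int_{t_0}^{t_1} L(a^k(t))\,\dd t = S_L(a^k).
\]
By Theorem~\ref{thm:int_paths}, the assignment $g(\cdot) \mapsto a^k(\cdot) = \RR^k(\jet k g(\cdot))$ is a bijection between smooth curves in a fixed $\alpha$-leaf $\G_{\alpha(g_0)}$ starting at $g_0$ and admissible paths in $\A^k(\G)$ starting at $\beta(g_0)$. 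It remains to match the variations along such paths.

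First I would set up the correspondence of variations. Given a groupoid variation $\delta \jet k g(t) = \kappa_{k,\G}(\jet k \delta g(t))$ with $\delta g(t) \in \T_{g(t)}\G^\alpha$, set $b(t) := \RR^1(\delta g(t)) \in \A(\G)_{\gamma(t)}$; then $\TT k \RR^1(\jet k \delta g(t)) = \jet k b(t) \in \TT k \A(\G)$. The defining diagram \eqref{eqn:kappa_k_A} of $\kappa_k$ as the reduction of $\kappa_{k,\G}$ forces $\T\RR^k(\delta \jet k g(t))$ to be $\kappa_k$-related to $\jet k b(t)$. Since $\T\RR^k$ sends $\delta \jet k g(t)$ into the fiber $\T_{a^k(t)}\A^k(\G)$, the uniqueness clause in the definition \eqref{eqn:variation} of an admissible variation (which follows from $\kappa_k$ being a Zakrzewski morphism, Theorem~\ref{thm:structure_Ak}) yields $\T\RR^k(\delta \jet k g(t)) = \delta_b a^k(t)$. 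Conversely, as $\RR^1$ is a fibre-wise isomorphism over $\beta$, any $b(t) \in \A(\G)_{\gamma(t)}$ arises uniquely from some $\delta g(t) \in \T_{g(t)}\G^\alpha$, so every admissible variation on the algebroid side lifts uniquely to a variation of the form \eqref{eqn:variation_groupoid}.

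Next I would check that the boundary conditions transform properly. The condition that $\jet{k-1}\delta g(t_i)$ be null in the vector bundle $\TT{k-1}\tau_{\T\G^\alpha}$ amounts to the vanishing at $t_i$ of $\delta g$ together with its time-derivatives up to order $k-1$ viewed in the fibers of $\T\G^\alpha \to \G$; since $\TT{k-1}\RR^1$ is a fibre-wise isomorphism over $\TT{k-1}\beta$ and linear on fibers, this translates precisely to the condition that $\jet{k-1}b(t_i)$ be null in $\TT{k-1}\tau$. From $\wt L = L \circ \RR^k$ and the identity $\T\RR^k(\delta \jet k g(t)) = \delta_b a^k(t)$ established above, one obtains the pointwise equality of integrands
\[
\langle \dd \wt L(\jet k g(t)), \delta \jet k g(t)\rangle = \langle \dd L(a^k(t)), \delta_b a^k(t)\rangle_{\tau_{\A^k(\G)}},
\]
and hence $\langle \dd S_{\wt L}(\jet k g), \delta \jet k g\rangle = \langle \dd S_L(a^k), \delta_b a^k\rangle$. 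Both directions of the equivalence follow at once: admissible test data on one side correspond bijectively (and with matching boundary conditions) to admissible test data on the other, so the first variation vanishes on one side if and only if it vanishes on the other.

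The main technical obstacle I expect is the identification $\T\RR^k(\delta \jet k g(t)) = \delta_b a^k(t)$, i.e., showing that the reduction of the groupoid variation is exactly the abstractly defined admissible variation generated by the reduced $b(t)$. This rests on \eqref{eqn:kappa_k_A} together with the Zakrzewski property of $\kappa_k$ from Theorem~\ref{thm:structure_Ak}, and is the content of the announced Lemma~\ref{lem:int_variations}. Once this pointwise identification and the fibre-wise isomorphism property of $\RR^1$ on boundary data are in hand, the remainder of the proof is a direct bookkeeping of the two correspondences of admissible data.
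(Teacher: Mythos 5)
Your proposal is correct and follows essentially the same route as the paper: the bijection of admissible paths from Theorem~\ref{thm:int_paths}, the identification of reduced groupoid variations with the abstractly defined admissible variations $\del_b a^k$ via the reduction diagram \eqref{eqn:kappa_k_A} and the fibre-wise isomorphism property of $\RR^1$ (this is exactly the paper's Lemma~\ref{lem:int_variations}), the matching of boundary conditions through $\TT{k-1}\RR^1$, and the resulting equality of first variations. No gaps.
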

\begin{proof}
Theorem \ref{thm:int_paths} establishes an equivalence between admissible paths in $\A^k(\G)$ and paths in $\G^\alpha$. We will need a similar correspondence at the level of admissible variations.

\begin{lem}\label{lem:int_variations}
Let $g(t)\in\G^\alpha$ and $a^k(t)\in\A^k(\G)$ be as in the assertion of Theorem \ref{thm:int_paths}. The reduction map
$$\del\jet kg(t)\longmapsto\del a^k(t)=\T\RR^k(\del\jet kg(t))$$
establishes a 1-1 correspondence between:
\begin{itemize}
\item admissible variations of $\jet kg(t)$ defined by formula \eqref{eqn:variation_groupoid},
\item admissible variations of $a^k(t)$ defined by formula \eqref{eqn:variation}.
\end{itemize}
\end{lem}
\begin{proof}[Proof of the Lemma]
Consider an admissible variation $\del\jet kg(t)=\kappa_{k,\G}\left(\jet k\del g(t)\right)$. By \eqref{eqn:kappa_k_A} $\del a^k(t):=\T\RR^k\left(\del\jet kg(t)\right)$ is $\kappa_k$-related to $\TT k\RR^1\left(\jet k\del g(t)\right)=\jet k\RR^1(\del g(t))$. We conclude that $\del a^k(t)$ is an admissible variation of the form $\del_b a^k(t)$ where $b(t):=\RR^1(\del g(t))$.

The converse requires a little more attention. Consider namely an admissible variation $\del_ba^k(t)$ of an admissible curve $a^k(t)\in\A^k(\G)$ lying over $\gamma(t)\in M$ which is generated by $b(t)\in \A(\G)_{\gamma(t)}$. The reduction map $\RR^1:\T\G^\alpha\ra\A(\G)$ is a fiber-wise isomorphism, hence there exists a unique path $\del g(t)\in \T_{g(t)}\G^\alpha$ such that $\RR^1(\del g(t))=b(t)$. Again by the commutativity of \eqref{eqn:kappa_k_A} we conclude that $\del \jet k g(t)=\kappa_{k,\G}(\jet k\del g(t))$ reduces to $\del_ba^k(t)$ via $\T\RR^k$.\end{proof}

The assertion of Theorem \ref{thm:red_var_prob} follows now from a simple observation. Namely, if paths $\jet kg(t)\in \TT k\G^\alpha$ and $a^k(t)\in \A^k(\G)$ are related by $\RR^k$, admissible variations $\del\jet kg(t)\in \T\TT k\G^\alpha$ and $\del_ba^k(t)\in \T\A^k(\G)$ by $\T\RR^k$, the corresponding generators $\del g(t)\in\T \G^\alpha$ and $b(t)\in\A(\G)$ by $\RR^1$ and Lagrangians $\wt L$ and $L$ by \eqref{eqn:red_lagr}, then the corresponding variations of the actions coincide, i.e.,
$$\<\dd S_{\wt L}(\jet k g),\del\jet kg>=\<\dd S_L(a^k),\del_b a^k>.$$
Due to Theorem \ref{thm:int_paths} and Lemma \ref{lem:int_variations} the correspondence between variations of both actions is one to one. Finally, from the proof of Lemma \ref{lem:int_variations} we also conclude that the reduction map $\TT{k-1}\RR^1$ interchanges the boundary conditions of Problems \ref{prob:var_groupoid} and \ref{prob:red_algebroid}. This proves that the corresponding solutions are related by $\RR^k$.
\end{proof}

\paragraph{Motivating Example \ref{ex:main} revisited -- reduced EL equations.}

Recall the notation used in Example \ref{ex:main} and consider a particular case of a situation studied there, namely a variational problem with fixed end-points associated with a $G$-invariant Lagrangian $\wt L:\TT 2P\ra\R$.
As observed in Example \ref{ex:inv_prob_group} such a problem is a particular case of Problem \ref{prob:var_groupoid} on the Atiyah groupoid $\Gamma_P$.

Introduce now local trivialization $P\approx M\times G$, where the $G$-action on $P$ corresponds to the canonical right action of $G$ on itself. Clearly, such an identification can be obtained by means of a local section $s:U\subset M\ra P$. Indeed, using $s$ we construct trivializing diffeomorphism
 $$p_s: U\times G\lra P|_U;\qquad p_s(x,g)= s(x)\cdot g.$$
Introduce now local coordinates $(x^a)$ and $(g^i)$ on $M$ and $G$, respectively. By means of $\TT 2p_s$, locally, $\TT 2P\approx \TT 2M\times \TT 2G$ and thus we can treat $\wt L$ as  a function of variables $(x^a,\dot x^b,\ddot x^c)$ and  $(g^i,\dot g^j,\ddot g^k)$ -- the adapted coordinates (see Subsection \ref{ssec:notation}) on $\TT 2M$ and $\TT 2G$, respectively. In this setting the $\nd{2}$-order EL equations for $\wt L$, describing the solutions of the problem considered, read as
\begin{equation}\label{eqn:EL_atiyah}
\begin{split}
&\frac{\dd^2}{\dd t^2}\left(\frac{\pa\wt  L}{\pa \ddot x^a}\right)-\frac{\dd}{\dd t}\left(\frac{\pa\wt L}{\pa \dot x^a}\right)+\frac{\pa\wt L}{\pa x^a}=0,\\
&\frac{\dd^2}{\dd t^2}\left(\frac{\pa\wt L}{\pa \ddot g^i}\right)-\frac{\dd}{\dd t}\left(\frac{\pa\wt L}{\pa \dot g^i}\right)+\frac{\pa\wt L}{\pa g^i}=0,
\end{split}
\end{equation}
where $\wt L=\wt L(x^a,\dot x^b,\ddot x^c,g^i,\dot g^j,\ddot g^k)$.

Our goal now is to derive the (local) reduced version of these equations, i.e., to express the information provided by \eqref{eqn:EL_atiyah} entirely in terms of $L:\TT 2P/G\ra\R$ -- the reduction of $\wt L$. Denote the tangent space $\T_eG$ at the unit $e$ of $G$  by $\g$. By Proposition \ref{prop:Decomposition_of_TkG} we can identify
$$\Phi^2_G:\TT 2G\lra G\times\T\g\approx G\times \g\times\g,$$
where on representatives $\phi_G^2(\jet 2_0\gamma) = (\gamma(0), \jet {1}_{t=0}\jet 1_{s=0} \gamma(t+s)\gamma(t)^{-1})$. Observe that the $\T\g$-part of $\Phi^2_G$ is defined as the first jet of the $\g$-part of the canonical trivialization
$\phi^1_G: \T G\ra G\times \g$ given by  $\phi_G^1(\jet 1_0\gamma) = (\gamma(0), \jet 1_{s=0} \gamma(s)\gamma(0)^{-1})$. In other words, for $X\in \T_gG$ we have
$\phi^1_G(X)=(g, \T R_{g^{-1}}(X))$,  where $R_g:h\mapsto hg$ is the right action  of $G$ on itself. Locally, in coordinates $(g^i,\dot g^j)$ on $\T G$ and $(a^i)$ on $\g$, where $a^i$ is the restriction of $\dot g^i$ to $\g$,  $\phi^1_G$ reads as
$$
\phi^1_G(g^i,\dot g^j)=\left(g^i, a^j=R(g)^j_i\dot g^i\right),
$$
for some matrices $R(g)^i_j$ smoothly depending on $g$. Therefore the local form of $\phi_G^2$ is simply
\begin{equation}\label{eqn:triv_T2G}
\phi^2_G(g^i,\dot g^j,\ddot g^k)=\left(g^i, a^j=R(g)^j_i\dot g^i,\dot a^k=R(g)^k_i\ddot g^i+\frac{\pa R(g)^k_i}{\pa g^s}\dot g^s\dot g^i\right).
\end{equation}
By means of $\phi^2_G$ we can identify $\TT 2G/G\approx \g\times\g$, and therefore treat $L$ as a smooth function on $\TT 2M\times \g\times\g$.
Since $\wt L$ is $G$-invariant, we have
\begin{equation}\label{eqn:L=L1}
 \wt L\left(x^a,\dot x^b,\ddot x^c,g^i,\dot g^j,\ddot g^k\right)= L\left(x^a,\dot x^b,\ddot x^c,a^i,\dot a^j\right),
\end{equation}
where $a^i$ and $\dot a^j$ are given by \eqref{eqn:triv_T2G}.

The last ingredient needed in our considerations is the relation between matrices $R(g)^i_j$ and the coefficients of the Lie bracket on $\g$. Observe, namely, that for $X\in\g$, $X^R = \left(\phi^1_G\right)^{-1}(G\times\{X\})$ is the right invariant vector field on $G$ associated with $X$. We shall follow the standard convention that the Lie bracket on $\g$ is defined by means of the formula
$$[X^R, Y^R] = [X, Y]^R.$$
Let now $(e_i)$ be the basis of $\g$ dual to $(a^i)$ and let $c^i_{jk}$ be the structure constants of $\g$ defined by
$[e_j, e_k] = c_{jk}^k e_i$. From $e_i^R = \left(R(g)^{-1}\right)^j_i\partial_{g^j}$ by calculating $[e_j^R, e_k^R]$ we easily find
 that the matrices $R^i_j:=R(g)^i_j$ are related with $c^i_{jk}$ by the following formula:
\begin{equation}\label{eqn:c_ijk}
c^i_{jk}=R^i_s\left[\frac{\pa \left(R^{-1}\right)^s_k}{\pa g_u}\left(R^{-1}\right)^u_j-\frac{\pa \left(R^{-1}\right)^s_j}{\pa g^u}\left(R^{-1}\right)^u_k\right]=
\left[\frac{\pa R^i_s}{\pa g^u}-\frac{\pa R^i_u}{\pa g^s}\right]\left(R^{-1}\right)^s_j\left(R^{-1}\right)^u_k.
\end{equation}

Now to derive the reduced EL equations one has to substitute \eqref{eqn:L=L1} into \eqref{eqn:EL_atiyah} and calculate the partial derivatives. Computations involving \eqref{eqn:c_ijk}, which we skip here, lead to the following set of equations
\begin{equation}\label{eqn:EL_red_atiyah}
\begin{split}
&\frac{\dd^2}{\dd t^2}\left(\frac{\pa L}{\pa \ddot x^a}\right)-\frac{\dd}{\dd t}\left(\frac{\pa L}{\pa \dot x^a}\right)+\frac{\pa L}{\pa x^a}=0,\\
&\left(\del^k_i\frac{\dd}{\dd t}+c^k_{ij}a^j\right)\left[\frac{\dd}{\dd t}\left(\frac{\pa  L}{\pa \dot a^k}\right)-\left(\frac{\pa L}{\pa  a^k}\right)\right]=0,
\end{split}
\end{equation}
where $ L= L(x^a,\dot x^b,\ddot x^c,a^i,\dot a^j)$. Following \cite{Cendra_Holm_Inn_lagr_red_1998}, equations \eqref{eqn:EL_red_atiyah} should be called \emph{second-order Hamel equations}. These equations are doubtlessly of geometric nature, yet they cannot be obtained in the frames of the standard variational calculus without performing a quite complicated reduction procedure described above.


\subsection{Variational calculus on higher algebroids}\label{ssec:var_calc_alg}

\paragraph{Variational problems on higher algebroids.}
Motivated by our considerations from the previous Subsection \ref{ssec:red_var_prob} we will now define an abstract variational problem on a higher algebroid $\E k$. We will basically rewrite all definitions involved in the formulation of a variational Problem \ref{prob:red_algebroid} substituting $\A^k(\G)$ with $\E k$.

We start with a \emph{Lagrangian function} $L:\E k\ra \R$ and the associated action
\begin{equation}\label{eqn:action}
 a^k(\cdot)\longmapsto S_L(a^k)=\int_{t_0}^{t_1}L(a^k(t))\dd t,
\end{equation}
defined on the space of admissible paths $a^k(\cdot)\in \Adm([t_0,t_1],\E k)$.

For an admissible path $a^k(t)\in\E k$ over $\gamma(t)\in M$ and any \emph{generator} $b(t)\in E_{\gamma(t)}$ we define an \emph{admissible variation} of $a^k(t)$ \emph{generated} by $b(t)$ as a unique path $\del_ba^k(t)\in\T_{a^k(t)}\E k$ which is $\kappa_k$-related to $\jet k b(t)\in\TT k E$:
\begin{equation}\label{eqn:variation_gen}
\{\del_ba^k(t)\}=\kappa_k(\jet kb(t))\cap\T_{a^k(t)}\E k.
\end{equation}
The \emph{differential of the action $S_L$} in the direction of $\del_ba^k(t)$ is defined as
$$\<\dd S_L(a^k),\del_b a^k>:=\int_{t_0}^{t_1}\<\dd L(a^k(t)),\del_ba^k(t)>_{\tau_{\E k}}\dd t.$$

With the above data we can associate a \emph{variational problem} on a higher algebroid $(\E k,\kappa_k)$
\begin{problem}\label{prob:var_algebroid} For a given Lagrangian $L:\E k\ra\R$ and a submanifold $S\subset\TT{k-1}E\times\TT{k-1}E$, which represents the admissible boundary values of $k-\st{1}$-jets of variation generators, find all curves $a^k\in\Adm([t_0,t_1],\E k)$ such that
$$\<\dd S_L(a^k),\del_ba^k>=0$$
for every generator $b(t)\in E_{\gamma(t)}$ satisfying $(\jet{k-1}b(t_0),\jet{k-1}b(t_0))\in S$.
\end{problem}

\begin{rem}\label{rem:var_prob}
In light of our considerations from the previous Subsection \ref{ssec:red_var_prob}, Problem \ref{prob:var_groupoid} on a Lie groupoid $\G$ is equivalent to a special case of Problem \ref{prob:var_algebroid} (when $(\E k,\kappa_k)$ is the $\thh{k}$-order Lie algebroid of $\G$ and $S$ consists of pairs of null vectors in $\TT{k-1}E$).

With a little effort one can show that Problem \ref{prob:var_algebroid} describes also reductions of invariant variational problems on $\G$ with more general boundary conditions. For example, if we look for trajectories $g(t)\in\G^\alpha$ such that $\jet{k-1} g(t_i)\in N_i\subset\TT{k-1}\G^\alpha$, then the corresponding set $S\subset\TT{k-1}\A(\G)\times\TT{k-1}\A(\G)$ should be $\kappa_{k-1}^{-1}\left(\T\RR^{k-1}(\T N_1)\right)\times\kappa_{k-1}^{-1}\left(\T\RR^{k-1}(\T N_1)\right)$. The interested reader can consult \cite{MJ_phd_2011} where a detailed discussion of the $\st{1}$-order case is provided.

Note also that in the case of an abstract algebroid $\E k$ the integral formulation of a variational problem makes, in general, no sense as there can be no integral object (groupoid) behind (see \cite{Crainic_Fernandes_int_lie_bra_2003}).
\end{rem}


\paragraph{Variational calculus on higher algebroids.}

Let $S_L(\cdot)$ be the action functional \eqref{eqn:action} and $\del_b a^k$ the variation \eqref{eqn:variation_gen}. Recall maps $\Upsilon_{k,\sigma}$ and $\momenta_{k-1,\sigma}$ introduced in Appendix \ref{app:geom_lem}. Let us define the  \emph{force} $\F_{L,a^k}(t)\in \E\ast $ and the \emph{momentum} $\M_{L,a^k}(t)\in \TT{k-1}\E\ast$ along $a^k(t)$ by
\begin{align}\label{eqn:force}
&\F_{L,a^k}(t)=\Upsilon_{k,\tau^\ast}\left(\jet k\Lambda_L(a^k(t))\right),\\\label{eqn:momentum}
&\M_{L,a^k}(t)=\momenta_{k-1,\tau^\ast}\left(\jet{k-1}\lambda_L(a^k(t))\right),
\end{align}
where $\Lambda_L:=\eps_{k}\circ\dd L:\E k\ra\TT k\E\ast$ and $\lambda_L:=\tauE k{k-1,\E\ast}\circ \Lambda_L:\E k\ra\TT{k-1}\E\ast$.

For an abstract variational problem on a higher algebroid we can prove the following result.

\begin{thm}\label{thm:var_calc} The differential of the action $S_L$ in the direction of the variation $\del_b a^k$ equals
\begin{equation}\label{eqn:action_var_full}
\<\dd S_L(a^k),\del_ba^k>=
\int_{t_0}^{t_1}\<\F_{L,a^k}(t),b(t)>_{\tau}\dd t+\<\M_{L,a^k}(t),\jet {k-1}b(t)>_{\TT {k-1} \tau}\Bigg|_{t_0}^{t_1}.
\end{equation}

In particular, an admissible curve $a^k(t)$ is a solution of Problem \ref{prob:var_algebroid} if and only if it satisfies the following \emph{Euler-Lagrange (EL) equations}
\begin{align}\label{eqn:EL}
&\F_{L,a^k}(t)=\Upsilon_{k,\tau^\ast}\left(\jet k\Lambda_L(a^k(t))\right)=0
\intertext{and \emph{transversality conditions}}\label{eqn:transv}
&(\M_{L,a^k}(t_0),\M_{L,a^k}(t_1))\quad\text{annihilates $S$}.
\end{align}
\end{thm}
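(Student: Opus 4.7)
The plan is to unwind the definition of $\langle \dd S_L(a^k),\del_b a^k\rangle$ by transporting the pairing to $\TT k\E\ast$ via the duality \eqref{eqn:kappa_eps_E_k} between $\kappa_k$ and $\eps_k$, and then to apply the higher-order integration-by-parts formulae from Appendix \ref{app:geom_lem}. After that, \eqref{eqn:EL} and \eqref{eqn:transv} follow from the identity \eqref{eqn:action_var_full} by a standard fundamental-lemma argument.

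First, unfold the action: $\langle \dd S_L(a^k),\del_b a^k\rangle = \int_{t_0}^{t_1}\langle \dd L(a^k(t)),\del_b a^k(t)\rangle_{\tau_{\E k}}\,\dd t$. By definition \eqref{eqn:variation_gen}, the vectors $\del_b a^k(t)\in \T\E k$ and $\jet k b(t)\in \TT k E$ are $\kappa_k$-related and project to the common base point $\rho_k(a^k(t))=\jet k\gamma(t)\in\TT k M$. Therefore the duality \eqref{eqn:kappa_eps_E_k} with $\Psi=\dd L(a^k(t))$, $A=\jet k b(t)$, $B=\del_b a^k(t)$ gives
\begin{equation*}
\langle \dd L(a^k(t)),\del_b a^k(t)\rangle_{\tau_{\E k}} \;=\; \langle \Lambda_L(a^k(t)),\jet k b(t)\rangle_{\TT k\tau},
\end{equation*}
so the action variation takes the form $\int_{t_0}^{t_1}\langle \Lambda_L(a^k(t)),\jet k b(t)\rangle_{\TT k\tau}\,\dd t$.

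Next, I would feed this into the geometric integration-by-parts identity for the vector bundle $\tau\colon E\to M$ recalled in Appendix \ref{app:geom_lem}. For any smooth curve $\Psi(t)\in\E\ast$-lifted object $\Psi(t)\in\TT k\E\ast$ covering a base curve $\gamma(t)$ and any section $b(t)$ of $\tau$ along $\gamma(t)$, this identity reads
\begin{equation*}
\int_{t_0}^{t_1}\langle \Psi(t),\jet k b(t)\rangle_{\TT k\tau}\,\dd t \;=\; \int_{t_0}^{t_1}\langle \Upsilon_{k,\tau^\ast}(\jet k \Psi(t)),b(t)\rangle_{\tau}\,\dd t \;+\; \langle \momenta_{k-1,\tau^\ast}(\jet{k-1}\bar\Psi(t)),\jet{k-1}b(t)\rangle_{\TT{k-1}\tau}\Big|_{t_0}^{t_1},
\end{equation*}
with $\bar\Psi(t)=\tauE k{k-1,\E\ast}(\Psi(t))$. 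Applying this to $\Psi(t)=\Lambda_L(a^k(t))$ (so that $\bar\Psi(t)=\lambda_L(a^k(t))$) and substituting the definitions \eqref{eqn:force}--\eqref{eqn:momentum} of $\F_{L,a^k}$ and $\M_{L,a^k}$ yields precisely \eqref{eqn:action_var_full}.

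Finally, the characterization of solutions of Problem \ref{prob:var_algebroid} follows from \eqref{eqn:action_var_full} by the usual two-step argument: choosing generators $b(t)$ compactly supported in $(t_0,t_1)$ kills the boundary term (and trivially meets the boundary constraint once the support is shrunk), so the fundamental lemma of the calculus of variations, applied fibre-wise to $\tau\colon E\to M$ along $\gamma$, forces $\F_{L,a^k}(t)\equiv 0$, i.e.\ \eqref{eqn:EL}. Once the integral term is seen to vanish, the stationarity condition reduces to vanishing of the boundary pairing for all $b$ with $(\jet{k-1}b(t_0),\jet{k-1}b(t_1))\in S$, which is exactly \eqref{eqn:transv}. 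The main technical obstacle is verifying that the integration-by-parts identity from \cite{MJ_MR_higher_var_calc_2013} applies verbatim here: one must check that $\Lambda_L\circ a^k\colon[t_0,t_1]\to\TT k\E\ast$ projects along $\TT k\tau^\ast$ to $\jet k\gamma(t)$ (this is where the properties of $\eps_k$ proved in Theorem \ref{thm:prop_Ek} enter: $\TT k\tau^\ast\circ\eps_k=\rho_k\circ\tau^\ast_{\E k}$), and that enough generators $b(t)$ are available to apply the fundamental lemma pointwise and, separately, to realize arbitrary admissible boundary jets in $S$; the latter is immediate because $b$ is free on $(t_0,t_1)$ and can be modified near each endpoint independently.
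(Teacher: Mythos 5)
Your proposal is correct and follows essentially the same route as the paper: transport $\dd L$ through the duality \eqref{eqn:kappa_eps_E_k} to get $\<\Lambda_L(a^k(t)),\jet k b(t)>_{\TT k\tau}$, apply the bundle-theoretic integration-by-parts formula \eqref{eqn:green} with $\Phi^{(k,k)}=\jet k\Lambda_L(a^k(t))$ (noting, as you do, the semi-holonomicity coming from $\TT k\tau^\ast\circ\eps_k=\rho_k\circ\tau^\ast_{\E k}$), and conclude by the standard fundamental-lemma argument. The only cosmetic difference is that you quote an already-integrated form of the identity, whereas the paper derives it from the pointwise formula \eqref{eqn:green} via the observation $\T\momenta_{k-1,\tau^\ast}\left(\jet 1\jet{k-1}\lambda_L(a^k(t))\right)=\jet 1\left[\momenta_{k-1,\tau^\ast}\left(\jet{k-1}\lambda_L(a^k(t))\right)\right]$, which turns the second summand into a total time derivative; this is a routine step and not a gap.
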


\begin{proof}
Denote by $\jet k\gamma(t)$ the $\rho_k$-image of $a^k(t)$. Let us calculate the variation of the action $S_L$ in the direction $\del a^k$:
\begin{align*}
&\<\dd L(a^k(t)),\del a^k(t))>_{\tau_{\E k}}\overset{\eqref{eqn:variation_gen}}=
\<\dd L(a^k(t)),\kappa_{k,a^k(t)}\left(\jet kb(t)\right)>_{\tau_{\E k}}\overset{\eqref{eqn:kappa_eps_E_k}}=\\
&\<\eps_{k}\circ\dd L(a^k(t)),\jet kb(t)>_{\TT k\tau}=\<\Lambda_L(a^k(t)),\jet kb(t)>_{\TT k\tau}.
\end{align*}

Now we can use formula \eqref{eqn:green} with $\Phi^{(k,k)}=\jet k \Lambda_L(a^k(t))$, $\Phi^{(0,k)}=\Lambda_L(a^k(t))$ and $\Phi^{(k,k-1)}=\jet k \lambda_L(a^k(t))$.
(Observe that element $\Phi^{(k,k)}:=\jet k\Lambda_L(a^k(t))\in\TT k\TT k\E \ast$ is indeed a semi-holonomic vector as it projects to $\jet k\jet k\gamma=\jet{2k}\gamma\in\TT{2k}M$.) We get
\begin{align*}
&\<\Lambda_L(a^k(t)),\jet kb(t)>_{\TT k\tau}\overset{\eqref{eqn:green}}{=}\\
&\<\Upsilon_{k,\tau^\ast}\left(\jet k\Lambda_L(a^k(t))\right),b(t)>_\tau+\<\T\momenta_{k-1,\tau^\ast}\left(\jet k\lambda_L(a^k(t)))\right),\jet kb(t)>_{\Tt k\tau}.
\end{align*}
In the first summand we recognize $\F_{L,a^k}(t)$ defined by \eqref{eqn:force}. To the second we can apply the equality
$$
\T\momenta_{k-1,\tau^\ast}\left(\jet k\lambda_L(a^k(t))\right)=
\T\momenta_{k-1,\tau^\ast}\left(\jet 1 \jet {k-1}\lambda_L(a^k(t))\right)=
\jet 1\left[\momenta_{k-1,\tau^\ast}\left(\jet {k-1}\lambda_L(a^k(t))\right)\right]$$ We conclude that
\begin{align*}
&\<\Lambda_L(a^k(t)),\jet kb(t)>_{\TT k\tau}=\\
&\<\F_{L,a^k}(t),b(t)>_\tau+\<\jet 1\left[\momenta_{k-1,\tau^\ast}\left(\jet{k-1}\lambda_L(a^k(t))\right)\right],\jet 1\jet {k-1}b(t)>_{\T\TT {k-1}\tau}=
\\
&\<\F_{L,a^k}(t),b(t)>_\tau+\frac{\dd}{\dd t}\<\momenta_{k-1,\tau^\ast}\left(\jet{k-1}\lambda_L(a^k(t))\right),\jet {k-1}b(t)>_{\TT {k-1}\tau}=
\\
&\<\F_{L,a^k}(t),b(t)>_\tau+\frac{\dd}{\dd t}\<\M_{L,a^k}(t),\jet {k-1}b(t)>_{\TT{k-1}\tau},
\end{align*}
where $\M_{L,a^k}(t)$ is defined by \eqref{eqn:momentum}. Thus
\begin{equation}\label{eqn:action_infinitesimal}
\<\dd L(a^k(t)),\del_ba^k(t)>_{\tau_{\E k}}=\<\F_{L,a^k}(t),b(t)>_\tau+\frac{\dd}{\dd t}\<\M_{L,a^k}(t),\jet {k-1}b(t)>_{\TT{k-1}\tau}.
\end{equation}
Integrating the above expression over $[t_0,t_1]$ we get \eqref{eqn:action_var_full}. Conditions \eqref{eqn:EL} and \eqref{eqn:transv}
are now obtained by the standard reasoning.
\end{proof}

\begin{rem}\label{rem:EL_algebroid_full}
The construction of the EL equations can be represented on the following diagram:
\begin{equation}\label{eqn:EL_gen_diagram}
\xymatrix{
&&&&&& \ker\Upsilon_{k,\tau^\ast}\ar@{^{(}->}[d]\\
&&\TT\ast\E k \ar[rr]^{\varepsilon_{k}}\ar[d]^{\tau^\ast_{\E k }} && \TT k\E\ast \ar[d]^{\TT k\tau^\ast}\ar@{-->}[rr]^{\jet k\Lambda_L(a^k)}\ar@{-->}[rru]^{\exists ?}&&\TT{k,hol}\E\ast \ar[d]^{\Upsilon_{k,\tau^\ast}}\\
I\ar[rr]^{a^k}&&\E k\ar[rr]^{\rho_k} \ar@/^0.5pc/@{..>}[u]^{\dd L}\ar@{..>}[rru]^{\Lambda_L} && \TT k M && \E\ast.
}\end{equation}

Similarly, the geometric construction of the momenta can be represented as follows:
\begin{equation}\label{eqn:momenta_gen_diagram}
\xymatrix{
&&\TT\ast\E k \ar[rr]^{\varepsilon_{k}}\ar[d]^{\tau^\ast_{\E k }} && \TT k\E\ast \ar[d]^{\tau^k_{k-1,\E\ast}}&&\TT{k-1,hol}\E\ast \ar[d]^{\momenta_{k-1,\tau^\ast}}\\
I\ar[rr]^{a^k}&&\E k\ar@{..>}[rr]^{\lambda_L} \ar@/^0.5pc/@{..>}[u]^{\dd L}\ar@{..>}[rru]^{\Lambda_L} && \TT{k-1}\E\ast\ar@{-->}[rru]_{\jet{k-1}\lambda_L(a^k)} &&  \TT{k-1}\E\ast.
}\end{equation}
Above we used dotted arrows to denote the objects associated with the Lagrangian function,  whereas dashed arrows are maps defined only along the images of $a^k(t)$.

Note also that the map $\Upsilon_{k,\pi}$ allows us to define \emph{higher-order EL equations with external forces}. Namely, given a \emph{force}, i.e., a map $F:[t_0,t_1]\ra\E\ast$ we can consider equation
$$\Upsilon_{k,\pi}\left(\jet k\Lambda_L(a^k(t))\right)=F(t).$$
\end{rem}

Generalized Lagrangian formalism expressed by means of diagrams \eqref{eqn:EL_gen_diagram} and \eqref{eqn:momenta_gen_diagram} covers two important special cases. The first is the standard higher-order variational calculus on a manifold $M$ which, in a similar setting, was presented by us in \cite{MJ_MR_higher_var_calc_2013}. The second is variational calculus on algebroids \cite{KG_JG_var_calc_alg_2008, KG_JG_PU_geom_mech_alg_2006}. In the next section we will study these two examples. Additionally we consider also another  special case of a variational problem on a higher Lie algebra, which leads to higher-order Euler-Poincar\'{e} equations \cite{Gay_Holm_Inn_inv_ho_var_probl_2012}.

\begin{ex}
Consider again the variational problem described in the last paragraph of Subsection \ref{ssec:red_var_prob}. By Theorem \ref{thm:red_var_prob}, this problem is equivalent to a variational Problem \ref{prob:red_algebroid} on an Atiyah algebroid $(\A^2(\Gamma_P)=\TT 2P/G,\kappa_2)$ with  the Lagrangian $L:\TT 2P/G\ra\R$. Since the boundary conditions are trivial, by Theorem \ref{thm:var_calc}, solutions of this problem are characterized by the generalized EL equations \eqref{eqn:EL} alone. In the next Section \ref{sec:examples} we will show that these equations are precisely \eqref{eqn:EL_red_atiyah}. This shows that the reduced EL equations \eqref{eqn:EL_red_atiyah} (and also their higher analogs) can be obtained directly  from $L$ without passing through time-taking calculations.
\end{ex}

\newpage
\section{Examples}\label{sec:examples}
In this section we give examples of higher algebroids and generalized higher-order EL equations.

\subsection{Examples of higher algebroids}\label{ssec:examples_algebroids}

First we will discuss natural examples of higher algebroids including higher tangent bundles,  higher Lie algebras, higher Atiyah algebroids and higher action algebroids.


\paragraph{Higher tangent bundles.}
The simplest example of higher algebroids is a higher tangent bundle $\E k=\TT k M$ with the canonical relation $\kappa_k=\kappa_{k,M}:\TT k \T M\ra \T\TT kM$ and trivial anchor map $\rho_k=\id_{\TT k M}$. This is a higher algebroid associated with a pair groupoid $\G=M\times M$.


\paragraph{Higher Lie algebras.}

A Lie group $G$ with a natural $G$-action on the right is an extreme example of a principal $G$-bundle. The corresponding Lie groupoid is simply the Lie group $G$ itself in this case. It is well-known that the Lie algebroid associated with this groupoid is $\A(G)=\T G/G=:\g$ -- with the structure of a Lie algebra of the group $G$. By Proposition \ref{prop:atiyah_alg} higher algebroids associated with $G$ are $\A^k(G)=\TT kG/G$ with the canonical relations $\kappa_k=:\kappa_{k}^\g$ described by means of \eqref{eqn:kappa_Atiyah} with $P=G$. We will refer to higher algebroids $(\TT kG/G,\kappa_k^\g)$  as to \emph{higher Lie algebras}. For future purposes we are going to give a more concrete description of these objects.

In general the $\thh{k}$ tangent space $\T^k_m M$ to a manifold $M$ at a point $m\in M$ is only a graded space (not a vector space if $k>1$) of rank $(r, \ldots, r)$ where $r=\dim M$ (see Subsection \ref{ssec:notation}) admitting no natural splitting. The group structure of $G$, however, induces the canonical  decomposition of $\T^k_e G$ into the direct sum of homogeneity spaces of degrees $1, 2, \ldots, k$, as is explained in the following

\begin{prop} \label{prop:Decomposition_of_TkG} The $\thh{k}$ tangent bundle to a Lie group $G$ has the canonical decomposition
\begin{equation}\label{eqn:trivialization_of_TkG}
\phi_G^k :  \TT k G \ra G\times \TT {k-1} \g \simeq G \times \g^{\oplus k}, \quad \phi_G^k(\jet k_0\gamma) = (\gamma(0), \jet {k-1}_{t=0}\jet 1_{s=0} \gamma(t+s)\gamma(t)^{-1}). 
\end{equation}
\end{prop}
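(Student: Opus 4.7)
The plan is to factor $\phi_G^k$ as a composition of two canonical diffeomorphisms, one for each factor in the target. The first is the standard right trivialization of $\TT kG$ coming from the action of $G$ on itself by right translation; the second is the Maurer--Cartan identification of $\TT k_eG$ with $\TT{k-1}\g$. Given these two, the decomposition $\TT{k-1}\g\simeq\g^{\oplus k}$ is then just the canonical one for higher tangents of a vector space.

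For the right trivialization, I would introduce
$$\Theta:\TT kG\longrightarrow G\times \TT k_eG,\qquad \jet k_0\gamma\longmapsto\bigl(\gamma(0),\,\TT kR_{\gamma(0)^{-1}}(\jet k_0\gamma)\bigr),$$
whose inverse $(g,X)\mapsto \TT kR_g(X)$ is manifestly smooth. Since each $R_g$ is a diffeomorphism of $G$, so is each $\TT kR_g$, and $\Theta$ is a diffeomorphism. For the Maurer--Cartan step, I would define
$$\Lambda:\TT k_eG\longrightarrow\TT{k-1}\g,\qquad \jet k_0\gamma\longmapsto \jet{k-1}_{t=0}\xi_\gamma(t),\quad \xi_\gamma(t):=\jet 1_{s=0}\gamma(t+s)\gamma(t)^{-1}\in\g.$$
For each $t$ the curve $s\mapsto \gamma(t+s)\gamma(t)^{-1}$ passes through $e$ at $s=0$, so $\xi_\gamma(t)$ is in fact the right-trivialization $\T R_{\gamma(t)^{-1}}\dot\gamma(t)$ of the velocity; in particular it depends smoothly on $\jet 1\gamma(t)$, so $\jet{k-1}_0\xi_\gamma$ depends only on $\jet k_0\gamma$, and $\Lambda$ is well defined and smooth. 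The inverse would be constructed by solving the right-invariant ODE $\dot\gamma(t)=\T R_{\gamma(t)}\xi(t)$ with $\gamma(0)=e$ for an arbitrary smooth representative $\xi$ of a class in $\TT{k-1}\g$; inductive differentiation of this ODE exhibits $\gamma^{(j)}(0)$ as a universal polynomial expression in $\xi(0),\dot\xi(0),\ldots,\xi^{(j-1)}(0)$ (with coefficients built from the Lie bracket of $\g$), so $\jet k_0\gamma$ depends only on $\jet{k-1}_0\xi$.

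Composing, the identity $\phi_G^k=(\id_G\times\Lambda)\circ\Theta$ would follow by direct inspection on representatives, using that the logarithmic derivative $\xi_\gamma$ is invariant under right translation of $\gamma$ by any constant element of $G$ (so it coincides with $\xi_{\gamma'}$ for $\gamma'(t):=\gamma(t)\gamma(0)^{-1}$, the curve representing $\TT kR_{\gamma(0)^{-1}}(\jet k_0\gamma)\in\TT k_eG$). The final identification $\TT{k-1}\g\simeq\g^{\oplus k}$ is then the canonical decomposition of higher tangent bundles of a vector space $V$: a $(k-1)$-jet of a curve $\xi:\R\to V$ is completely determined by the tuple $\bigl(\xi(0),\dot\xi(0),\ldots,\xi^{(k-1)}(0)\bigr)\in V^{\oplus k}$.

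The main technical obstacle is verifying the well-definedness of the ODE inverse of $\Lambda$ on jet spaces, i.e.\ that $\jet k_0\gamma$ truly depends only on $\jet{k-1}_0\xi$ (and smoothly so). In coordinates this is a Fa\`a di Bruno--type induction, but because $\T R_{\gamma(t)}$ in the ODE depends on $\gamma$ itself, the recursion is implicit and has to be unravelled carefully. A conceptually cleaner route invokes Kol\'a\v r's Weil-functor machinery (in particular, the induced Lie group structure on $\TT kG$ and the functoriality of jets of ODE solutions), but for a self-contained argument I would proceed by direct inductive differentiation.
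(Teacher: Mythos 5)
Your proof is correct, and it shares the paper's first step: your $\Theta$ is exactly the right trivialization $\psi^k_G(\jet k_0\gamma)=(\gamma(0),\jet k_{t=0}\gamma(t)\gamma(0)^{-1})$ used there, and your $\Lambda$ coincides with the paper's fiber map $\phi^k_{G,e}:\TT k_eG\to\TT{k-1}\g$. Where you genuinely diverge is in proving that this fiber map is invertible. The paper argues abstractly: $\phi^k_{G,e}$ is the restriction of $\TT{k-1}\phi_G$ followed by a projection, hence a morphism of graded spaces; since $\dim\TT k_eG=\dim\TT{k-1}\g$, it suffices to prove injectivity, which is done by induction on $k$ using the commuting square with the tower projections $\tau^k_{k-1}$. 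You instead construct the inverse explicitly by integrating the right logarithmic derivative: the solution of $\dot\gamma(t)=\T R_{\gamma(t)}\xi(t)$, $\gamma(0)=e$, has $k$-jet at $0$ depending polynomially (Fa\`a di Bruno recursion) on $\jet{k-1}_0\xi$, and uniqueness of ODE solutions gives both composition identities. Your route buys an explicit, constructive inverse and its smoothness, at the price of a coordinate recursion (note that its coefficients are built from derivatives of the right-translation map in the chosen chart, and become genuine bracket expressions only in adapted, e.g.\ exponential, coordinates); the paper's route avoids ODE theory and delivers directly that $\phi^k_{G,e}$ is an isomorphism of \emph{graded} spaces, which is what the surrounding discussion (decomposition of $\TT k_eG$ into homogeneity components of degrees $1,\dots,k$) actually uses. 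Your argument recovers this with one extra remark: $\Theta$ and $\Lambda$ visibly intertwine the homothety actions $\gamma(t)\mapsto\gamma(ut)$ and $\xi(t)\mapsto u\,\xi(ut)$, so by the homogeneity-structure theorem of \cite{JG_MR_gr_bund_hgm_str_2011} they are automatically morphisms of graded bundles.
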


\begin{proof} A direct calculation on representatives shows that $\phi_G^k$ is a  trivialization defined by right translations
 $$
\psi_G^k: \TT k G \ra G \times \TT k_e G, \quad \jet k_0 \gamma \mapsto (\gamma(0), \jet k_{t=0} \gamma(t)\gamma(0)^{-1})
$$
composed with $\id_G\times \phi_{G, e}^k: G\times \TT k_e G \ra G\times \TT {k-1}\g$, where $\phi_{G, e}^k$ is the restriction to $\TT k_e G \subset \TT k G\subset \TT {k-1}\T G$ of
$$
\TT {k-1}\phi_G : \TT {k-1} \T G\ra \TT {k-1} G \times \TT {k-1}\g
$$
composed with the projection onto  $\TT {k-1}\g$. Since $R_{g}: G\to G$ is a diffeomorphism and
$\psi^k_G(v^k)=(\tau^k_G(v^k), \TT k R_{g^{-1}}(v^k))$ for $v^k\in\TT k_g G$, the map $\psi^k_G$ is indeed a trivialization, i.e., an isomorphism of graded bundles over $G$ (see Theorem~3.2 \cite{JG_MR_gr_bund_hgm_str_2011}).
Now we need only to prove that $\phi_{G, e}^k:\TT k_eG\ra\TT {k-1}\g$ is an isomorphism of graded spaces. We shall proceed by induction with respect to $k$.

The case $k=1$ is trivial.
Since $\phi_G$ is a vector bundle isomorphism, $\TT {k-1}\phi_G$ is an isomorphism of double graded bundles of degree $(k-1, 1)$ on $\TT {k-1}\T G$ (with bases $\T G$ and $\TT {k-1} G$) and $\TT {k-1} G\times \TT {k-1} \g$ (with bases $G\times \g$ and $\TT {k-1} G$). Since  $\TT k G$ is a graded subbundle of $\TT {k-1}\T G$ (considered as a graded bundle of total degree $k$), the restriction of $\TT {k-1}\phi_G$ to $\TT k G$ and the projection $\TT {k-1} G \times \TT {k-1} \g \ra \TT {k-1} \g$ are morphism of graded bundles. Therefore, $\phi^k_{G, e}$ is a morphism of graded spaces.
Since the dimensions of $\TT k_eG$ and $\TT {k-1}\g$ are equal it is enough to prove that $\phi_{G, e}^k$ is injective. Let us assume that  $\phi_{G, e}^k(v^k_1)=\phi_{G, e}^k(v^k_2)$ for
some distinct $k$-velocities $v^k_1, v^k_2\in\TT k_e G$, represented by curves $\gamma_1$ and $\gamma_2$. The map $(\TT {k-1}\phi_G)_{|\TT k_e G}$ takes $v_j^k$, $j=1,2$, to $(\jet {k-1}_0 \gamma_j, \phi^k_{G, e}(v_j^k))$ and $\TT {k-1}\phi_G$ is an isomorphism. Therefore $\jet {k-1}_0 \gamma_1 \neq \jet {k-1}_0\gamma_2$. Since the following diagram
$$
\xymatrix{
\TT k_e G \ar[d]_{\tau^k_{k-1, G}} \ar[rr]^{\phi^k_{G, e}} && \TT {k-1}\g \ar[d]^{\tau^{k-1}_{k-2,\g}} \\
\TT {k-1}_e G \ar[rr]^{\phi^{k-1}_{G, e}} && \TT {k-2}\g
 }$$
commutes and $\phi^{k-1}_{G, e}(v_1^{k-1})\neq \phi^{k-1}_{G, e}(v_2^{k-1})$ by the  induction hypothesis, we get $\tau^{k-1}_{k-2, \g}(\phi^k_{G, e}(v_1^{k-1})) \neq  \tau^{k-1}_{k-2, \g}(\phi^k_{G, e}(v_2^{k-1}))$, thus a contradiction.
\end{proof}

Using the above Proposition \ref{prop:Decomposition_of_TkG} we may canonically identify $\TT kG/G\approx \TT{k-1}\g\approx\g^{\oplus k}$. Let us now describe the canonical relation $\kappa_k^\g$. We will start by recalling the canonical relation $\kappa=:\kappa^\g:\T\g\relto\T\g$ encoding the structure of a Lie algebra on $\g$. Due to \eqref{eqn:kappa_Atiyah} it can be described as the reduction:
$$\xymatrix{
\T\T G\ar[rr]^{\kappa_{G}}\ar[d]&& \T\T G\ar[d]\\
**[l]\T\g=\T\T G/{\T G}\ar@{-|>}[rr]^{\kappa^\g}&&**[r]\T\T G/\T G=\T\g,}$$
where the quotients are taken with respect to the action $\T r_{G,1}:\T\T G\times\T G\ra\T\T G$ (we denote by $r_{G,0}:G\times G\ra G$ the standard $G$-action on itself and by $r_{G,1}:\T G\times G\ra\T G$ the induced action on $\T G$).
Note that $r_{G,1}$ coincides with the action of $G$ as a subgroup $G\approx O_G(G)\subset\T G$ of $\T G$ (symbol $O_M$ denotes the inclusion of $M\subset \T M$ as the zero section). Therefore $\T r_{G,1}$ coincides with the action of the subgroup $\T G\approx \T O_G(\T G)\subset\T\T G$.

 A direct description of $\kappa^\g$ in terms of the Lie bracket $[\cdot,\cdot]_\g$ is the following. Elements $X=(X_0,X_1)$ and $Y=(Y_0,Y_1)$ in $\T\g\approx\g\times\g$ are $\kappa^\g$-related if and only if (cf. \cite{KG_JG_var_calc_alg_2008} and Proposition \ref{prop:kappa_bracket}):
\begin{equation}\label{eqn:g_kappa_bracket}
[X_0,Y_0]_\g=X_1-Y_1.
\end{equation}

Our next step will be the description of $\TT k\g$ equipped with  the structure of the Lie algebra of the group $\TT k G$. On the one hand it is well-known (cf. \cite{Kolar_Michor_Slovak_nat_oper_diff_geom_1993}) that, for a Weil algebra $A$,  the Lie algebra of $\T^AG$ is $\T^A\g\simeq \g\otimes A$ with the Lie bracket given by
\begin{equation}\label{eqn:weil_bracket}
[X\otimes a, Y\otimes b]_{\T^A\g}  = [X, Y]_{\g}\otimes ab,
\end{equation}
for $X, Y\in \g$ and $a, b\in A$. Functor $\TT k$ corresponds, of course, to a polynomial algebra
$\Weil^k \simeq \R[\nu]/\<\nu^{k+1}>$.

On the other hand, relation $\kappa^{\TT k\g}:\T\TT k\g\relto\T\TT k\g$ is defined as the reduction
$$\xymatrix{
\T\T \TT kG\ar[r]^{\kappa_{\TT kG}}\ar[d]& \T\T \TT kG\ar[d]\\
**[l]\T\TT k\g=\T\T \TT kG/{\T \TT kG}\ar@{-|>}[r]^{\kappa^{\TT k\g}}&**[r]\T\T\TT k G/\T \TT kG=\T\TT k\g,}$$
where the quotients are taken with respect to the action $\T\T\TT kG\times\T\TT kG\ra\T\T\TT kG$ of the subgroup $\T\TT kG\approx\T O_{\TT k G}(\T\TT kG)\subset \T\T\TT kG$. It turns out that relation $\kappa^{\TT k\g}$ can be nicely described in terms of relation $\TT k\kappa^\g$.

\begin{lem}\label{lem:rel_g}
Relations $\kappa^{\TT k\g}$ and $\TT k\kappa^\g$ are related by the following commutative diagram:
\begin{equation}\label{eqn:rel_g}
\xymatrix{
\TT k \T\g \ar[d]^{\kappa_{k,\g}}\ar@{-|>}[rr]^{\TT k\kappa^\g}&&\TT k\T\g\ar[d]^{\kappa_{k,\g}}\\
\T\TT k\g \ar@{-|>}[rr]^{\kappa^{\TT k\g}}&&\T\TT k\g.
}\end{equation}
\end{lem}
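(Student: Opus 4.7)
The plan is to describe both $\TT k\kappa^\g$ and $\kappa^{\TT k\g}$ explicitly as graphs of linear conditions in terms of Lie brackets, and then to read off the commutativity of \eqref{eqn:rel_g} from the Weil-algebra description of $\TT k V$ for a vector space $V$.

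First I would use the canonical linear identifications $\T\g\cong\g\oplus\g$, $\TT k\g\cong\g\otimes\Weil k$, $\TT k\T\g\cong\g\otimes\Weil k\otimes\Weil 1$, and $\T\TT k\g\cong\g\otimes\Weil 1\otimes\Weil k$, which are available because $\g$ is a vector space. Under the first of these, formula \eqref{eqn:g_kappa_bracket} presents $\Graph\kappa^\g$ as the linear subspace of $(\g\oplus\g)\times(\g\oplus\g)$ cut out by $[X_0,Y_0]_\g=X_1-Y_1$. Applying the functor $\TT k$ to this linear graph, and using the fact that $\TT k$ extends a bilinear map $\g\times\g\to\g$ to a $\Weil k$-bilinear map $\TT k\g\times\TT k\g\to\TT k\g$, I would obtain
\[
\Graph\TT k\kappa^\g=\bigl\{((X_0,X_1),(Y_0,Y_1))\in(\TT k\g)^4:[X_0,Y_0]_{\TT k\g}=X_1-Y_1\bigr\},
\]
where the bracket on $\TT k\g$ is precisely the Weil bracket of \eqref{eqn:weil_bracket}. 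On the other hand, since $\TT k\g$ is itself a Lie algebra, formula \eqref{eqn:g_kappa_bracket} applied directly to $\TT k\g$ yields the same linear subspace (after relabelling $(X_0,X_1)\to(X,X')$ and $(Y_0,Y_1)\to(Y,Y')$) as $\Graph\kappa^{\TT k\g}$.

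The final step would be to identify the canonical flip $\kappa_{k,\g}$, under the Weil-algebra identifications above, as the swap of the two Weil factors $\Weil k\otimes\Weil 1\cong\Weil 1\otimes\Weil k$; in the linear $\g^2$-coordinates it is simply the map $(X_0,X_1)\mapsto(X,X')=(X_0,X_1)$. Hence $\kappa_{k,\g}\times\kappa_{k,\g}$ carries $\Graph\TT k\kappa^\g$ onto $\Graph\kappa^{\TT k\g}$, which is exactly the commutativity of diagram \eqref{eqn:rel_g}. The main technical point, and the only real obstacle, is the identification of the $\TT k$-lift of the bracket $[\cdot,\cdot]_\g:\g\times\g\to\g$ with the Weil bracket on $\TT k\g$; this is a standard property of Weil functors applied to multilinear maps, amounting in coordinates to the observation that the coefficient of $\nu^s$ in $[\sum_i X_i\nu^i,\sum_j Y_j\nu^j]_{\TT k\g}$ equals $\sum_{i+j=s}[X_i,Y_j]_\g$, after which the remaining steps are essentially tautological.
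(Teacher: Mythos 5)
Your argument is correct, but it takes a genuinely different route from the paper. The paper stays at the group level: it uses that $\TT k\kappa^\g$ is, by definition, the reduction of $\TT k\kappa_G$ by the subgroup $\TT k\T G\cong\TT k\T O_G(\TT k\T G)\subset\TT k\T\T G$, while $\kappa^{\TT k\g}$ is the reduction of $\kappa_{\TT kG}$ by $\T\TT kG\cong\T O_{\TT kG}(\T\TT kG)\subset\T\T\TT kG$, and then checks via a commutative diagram that $\T\kappa_{k,G}\circ\kappa_{k,\T G}$ --- whose reduction is precisely $\kappa_{k,\g}$ --- carries the first subgroup (and its action) onto the second; the commutativity of \eqref{eqn:rel_g} is then pure equivariance of the two reductions. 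You instead argue infinitesimally: you describe both graphs by the bracket equation \eqref{eqn:g_kappa_bracket} (once for $\g$ and lifted by $\TT k$, once directly for the Lie algebra $\TT k\g$ of $\TT kG$), identify the bracket on $\TT k\g$ with the $\Weil^k$-bilinear extension of $[\cdot,\cdot]_\g$ via \eqref{eqn:weil_bracket}, and read off the statement from the coordinate form of the canonical flip. This is a legitimate and arguably more elementary verification; its price is that it relies on the explicit formula \eqref{eqn:weil_bracket} and on the direct bracket description of $\kappa$ applied to the group $\TT kG$, whereas the paper's argument is structural and needs neither, which makes it the more robust template (the same reduction/equivariance pattern is used throughout Section \ref{sec:red_lie}).

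One wording slip worth fixing: $\Graph\kappa^\g$ is \emph{not} a linear subspace of $(\g\oplus\g)\times(\g\oplus\g)$, since the defining equation $[X_0,Y_0]_\g=X_1-Y_1$ is bilinear in the base points; the graph is only fibrewise linear, as befits a Zakrzewski morphism. This does not harm your argument: the graph is the image of the polynomial embedding $(X_0,X_1,Y_0)\mapsto\left(X_0,X_1,Y_0,X_1-[X_0,Y_0]_\g\right)$, equivalently the zero set of a submersion, and a product-preserving Weil functor applied to such data yields exactly the subset cut out by the $\Weil^k$-extended equation --- which is the formula you actually use. Stating this explicitly closes the only small gap in your write-up.
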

\begin{proof}Note that $\TT k\kappa^\g$ is defined by the reduction of $\TT k\kappa_G:\TT k\T\T G\ra\TT k\T\T G$ by the action of the subgroup $\TT k\T G\approx \TT k\T O_G(\TT k\T G)\subset\TT k\T\T G$.  The following commutative diagram:
$$\xymatrix{
\TT k\T G\ar[rr]^{\TT k\T O_G}\ar[d]^{\kappa_{k,G}}&&\TT k\T\T G \ar[rr]^{\TT k\kappa_G}\ar[d]^{\kappa_{k,\T G}}&& \TT k\T\T G\ar[d]^{\kappa_{k,\T G}}\\
\T\TT k G\ar[rr]^{\T\TT kO_G}\ar@{=}[d]&&\T \TT k\T G \ar[d]^{\T\kappa_{k,G}}&& \T \TT k\T G\ar[d]^{\T\kappa_{k,G}}\\
\T\TT k G \ar[rr]^{\T O_{\TT kG}}&&\T\T\TT kG \ar[rr]^{\kappa_{\TT kG}}&& \T\T\TT kG
}$$
shows that the subgroup  $\TT k\T G\approx \TT k\T O_G(\TT k\T G)$ is transformed by means of $\T\kappa_{k,G}\circ\kappa_{k,\T G}$ to the subgroup $\T\TT kG\approx\T O_{\TT kG}(\T\TT kG)\subset\T\T\TT kG$. Since $\kappa_{k,\g}$ is the reduction of  $\T\kappa_{k,G}\circ\kappa_{k,\T G}$ by the action of the mentioned subgroups and $\kappa^{\TT k\g}$ is the reduction of $\kappa_{\TT k G}$ by the action of the subgroup $\T\TT kG\approx\T O_{\TT kG}(\T\TT kG)$, the assertion follows. \end{proof}

As a simple consequence we obtain the following characterization of higher-algebroid relation $\kappa_{k}^\g:\TT k\A(G)=\TT k\g\relto\T\TT{k-1}\g=\T\A^k(G)$:

\begin{prop}\label{prop:kappa_k_g}
Relation $\kappa_{k}^\g$ is the restriction of relation $\kappa^{\TT{k-1}\g}$ to $\TT k\g\subset\T\TT{k-1}\g$:
\begin{equation}\label{eqn:kappa_k_g}
\xymatrix{
\T\TT{k-1}\g\ar@{-|>}[drr]^{\kappa^{\TT{k-1}\g}}\\
\TT k\g \ar@{-|>}[rr]^{\kappa_{k}^\g}\ar@{_{(}->}[u]&& \T\TT{k-1}\g,}
\end{equation}
i.e., vectors $X=(X_0,X_1)\in\TT k\g\subset\T\TT{k-1}\g\approx\TT{k-1}\g\times\TT{k-1}\g$ and $Y=(Y_0,Y_1)\in\T\TT{k-1}\g\approx\TT{k-1}\g\times\TT{k-1}\g$ are $\kappa_{k}^\g$-related if and only if
\begin{equation}\label{eqn:gk_kappa_bracket}
[X_0,Y_0]_{\TT{k-1}\g}=X_1-Y_1,
\end{equation}
where $[\cdot,\cdot]_{\TT{k-1}\g}$ is the Lie bracket on $\TT{k-1}\g$.

Moreover, the canonical morphism $\varepsilon_k^\g$ dual to $\kappa_k^\g$
$$\xymatrix{
\T^k\g^\ast \ar[d]  & \TT\ast\T^{k-1}\g \ar[l]_{\varepsilon_k^\g} \ar[d]^{\pi_{\T^{k-1}\g}}  \\ %
\{0\}  & \T^{k-1}\g \ar[l]
}$$
is given by
\begin{equation}\label{eqn:epsilonG}
\<\varepsilon_k^\g(Y_0, \xi), X>_{\T^k\g}=  \<\left(\ad_{\TT{k-1}\g}^\ast\right)_{Y_0}\xi, X_0>_{\T^{k-1}\g}+
\<\xi, X_1>_{\T^{k-1}\g},
\end{equation}
for every $(Y_0, \xi) \in \TT\ast\T^{k-1}\g \simeq \T^{k-1}\g\times \T^{k-1}\g^\ast$ and $X=(X_0,X_1)\in \TT k\g\subset\T\TT{k-1}\g\approx\TT{k-1}\g\times\TT{k-1}\g$. Here
$\<\cdot, \cdot>_{\T^{k}\g}: \T^{k}\g^\ast\times \T^{k}\g \to\R$
is the canonical pairing \eqref{eqn:pairingTwo}, while $\ad^\ast_{\TT{k-1}\g}$ stands for the coadjoint representation of the Lie algebra $\T^{k-1}\g$.
\end{prop}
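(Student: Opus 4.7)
The plan is to handle the three claims in sequence, treating the first as the main geometric content and deriving the other two essentially by unwinding definitions.

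For the identification \eqref{eqn:kappa_k_g}, I would combine the alternative description of the canonical relation from Proposition~\ref{prop:kappa_k_direct} with Lemma~\ref{lem:rel_g}. Applying Proposition~\ref{prop:kappa_k_direct} to $E=\g$ (with $\E{k}=\TT{k-1}\g$) gives
$$\kappa_k^\g \;=\; \bigl(\kappa_{k-1,\g}\circ \TT{k-1}\kappa^\g\bigr)\cap\bigl(\TT k\g\times \T\TT{k-1}\g\bigr).$$
Lemma~\ref{lem:rel_g} (with $k$ replaced by $k-1$) rewrites the composition as $\kappa^{\TT{k-1}\g}\circ \kappa_{k-1,\g}$. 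The remaining point is that the canonical flip $\kappa_{k-1,\g}$ identifies the two standard inclusions of $\TT k\g$ — as the subbundle of $\TT{k-1}\T\g$ via $\iota^{k-1,1}_\g$ and as the subbundle of $\T\TT{k-1}\g$ via $\iota^{1,k-1}_\g$. This is a direct check on representatives $\jet{k-1}_s\jet 1_t\gamma(s+t)\mapsto \jet 1_t\jet{k-1}_s\gamma(s+t)$. Therefore the restriction collapses to $\kappa^{\TT{k-1}\g}|_{\TT k\g}$, which is exactly \eqref{eqn:kappa_k_g}.

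For \eqref{eqn:gk_kappa_bracket}, I would note that by \eqref{eqn:weil_bracket} the Weil algebra structure on $\TT{k-1}\g$ endows it with a Lie algebra structure (it is the Lie algebra of the Lie group $\TT{k-1}G$). Hence formula \eqref{eqn:g_kappa_bracket}, applied to the Lie algebra $\TT{k-1}\g$, characterizes $\kappa^{\TT{k-1}\g}$-relatedness of $(X_0,X_1),(Y_0,Y_1)\in\T\TT{k-1}\g\simeq\TT{k-1}\g\times\TT{k-1}\g$ by $[X_0,Y_0]_{\TT{k-1}\g}=X_1-Y_1$. Combined with \eqref{eqn:kappa_k_g}, this gives \eqref{eqn:gk_kappa_bracket} immediately.

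For the formula \eqref{eqn:epsilonG} of the dual morphism, I would invoke the defining identity \eqref{eqn:kappa_eps_E_k}. Given $\Psi=(Y_0,\xi)\in \TT\ast_{Y_0}\TT{k-1}\g\simeq \TT{k-1}\g\times\TT{k-1}\g^\ast$, the unique $Y$ with $Y\in\kappa_k^\g(X)$ and $\tau_{\E k}(Y)=Y_0$ is $Y=(Y_0,\,X_1-[X_0,Y_0]_{\TT{k-1}\g})$ by part~2. Hence
$$\<\varepsilon_k^\g(\Psi),X>_{\TT k\tau}=\<\Psi,Y>_{\tau_{\E k}}=\<\xi,X_1-[X_0,Y_0]_{\TT{k-1}\g}>_{\T^{k-1}\g},$$
and applying the definition $\<\ad^\ast_{Y_0}\xi,X_0>_{\T^{k-1}\g}=\<\xi,[Y_0,X_0]_{\TT{k-1}\g}>_{\T^{k-1}\g}=-\<\xi,[X_0,Y_0]_{\TT{k-1}\g}>_{\T^{k-1}\g}$ yields \eqref{eqn:epsilonG}.

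The main obstacle I expect is paragraph~1: one must be careful that the canonical flip $\kappa_{k-1,\g}$ restricts correctly on the two inclusions of $\TT k\g$, and more generally that all the natural identifications (particularly $\TT\ast\TT{k-1}\g\simeq \TT{k-1}\g\times\TT{k-1}\g^\ast$, realized through the canonical pairing \eqref{eqn:pairingTwo} with its binomial coefficients) are kept straight. Once those identifications are nailed down, the remaining parts reduce to a pair of essentially algebraic manipulations.
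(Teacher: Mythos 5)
Your proposal is correct and follows essentially the same route as the paper's proof: it derives \eqref{eqn:kappa_k_g} from Proposition \ref{prop:kappa_k_direct} together with Lemma \ref{lem:rel_g}, using that the canonical flip acts as the identity on the holonomic subbundle $\TT k\g$, then obtains \eqref{eqn:gk_kappa_bracket} by applying \eqref{eqn:g_kappa_bracket} to the Lie algebra $\TT{k-1}\g$, and computes $\varepsilon_k^\g$ by the same duality calculation \eqref{eqn:kappa_eps_E_k}. The identification issues you flag (the two inclusions of $\TT k\g$ and the pairing \eqref{eqn:pairingTwo}) are handled exactly as in the paper, so no gap remains.
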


\begin{proof}
From Proposition \ref{prop:kappa_k_direct} we know that $\kappa_k$ is the following restriction of $\kappa_{k-1,\g}\circ\TT{k-1}\kappa^\g$:
$$\xymatrix{
\TT{k-1}\T\g\ar@{-|>}[rr]^{\TT{k-1}\kappa^\g}&&\TT{k-1}\T\g\ar[rr]^{\kappa_{k-1,\g}}&&\T\TT{k-1}\g\\
\TT k\g \ar@{-|>}[rrrr]^{\kappa_k^\g}\ar@{_{(}->}[u]&&&&\T\TT{k-1}\g.\ar@{=}[u]}$$

By Lemma \ref{lem:rel_g} the upper row in this diagram equals $\kappa^{\TT{k-1}\g}\circ\kappa_{k-1,\g}$, and since $\kappa_{k-1,\g}\big|_{\TT k\g}=\id_{\TT k\g}$ we get \eqref{eqn:kappa_k_g}. This in turn,  due to \eqref{eqn:g_kappa_bracket} considered for the Lie algebra $\TT{k-1}\g$, implies \eqref{eqn:gk_kappa_bracket}.

For the second part of the assertion fix $X=(X_0,X_1)$ and $Y=(Y_0,Y_1)$ related by \eqref{eqn:gk_kappa_bracket} and let us calculate the dual morphism
$\varepsilon_{k, Y_0}^\g: \T^\ast_{Y_0}\T^{k-1}\g\simeq \T^{k-1}\g^\ast \to \T^k\g^\ast$. By the definition of duality we have
\begin{align*}
\<\varepsilon_{k, Y_0}^\g(\xi), X>_{\T^{k}\g} =& \<\xi, \kappa_{k, Y_0}(X)>_{\T^{k-1}\g} =
 \<\xi, Y_1>_{\T^{k-1}\g}\overset{\eqref{eqn:gk_kappa_bracket}}=\<\xi,X_1+[Y_0,X_0]_{\TT{k-1}\g}>_{\TT{k-1}\g}=\\
&\<\xi,X_1>_{\TT{k-1}\g}+\<\left(\ad^\ast_{\TT{k-1}\g}\right)_{Y_0}\xi,X_0>_{\TT{k-1}\g}.
\end{align*}
for every  $\xi \in \TT{k-1}\g^\ast$.
\end{proof}


\paragraph{Higher Atiyah algebroids -- the local description.}
Higher Atiyah algebroids $(\TT kP/G,\kappa_k)$ associated with a principal $G$-bundle $p:P\ra M$ were discussed in detail at the end of Section \ref{sec:red_lie}. Recall also that in the last paragraph of Subsection \ref{ssec:red_var_prob} we constructed a local trivialization $P\approx M\times G$ of the bundle $p$. Clearly in this setting
$\TT kP\approx \TT kM\times\TT kG$ and, moreover, $\TT kP/G\approx \TT kM\times\TT kG/G$. Thus diagram \eqref{eqn:kappa_Atiyah} reads as
$$\xymatrix{\TT k\T P \ar@{=}[r]&\TT k \T M\times \TT k \T \ar@{->>}[d]G\ar[rr]^{\kappa_{k,M}\times\kappa_{k,G}} &&\T\TT k M\times\T\TT kG\ar@{=}[r] \ar@{->>}[d]& \T\TT kP\\
&\TT k\T M\times\TT k\T G/\TT kG\ar@{-|>}[rr]^{\kappa_{k,M}\times\kappa_\g^k}&& \T\TT kM\times\T\TT kG/\T G.}$$
where $\kappa_k^\g$ was described in the previous paragraph and $\kappa_{k,M}$ is the canonical flip.

By Proposition \ref{prop:Decomposition_of_TkG} we can canonically identify
$\TT kG/G\approx \TT{k-1}\g$. Within this identification the canonical relation $\kappa_k$ has the following product structure
\begin{equation}\label{eqn:kappa_atiyah_triv}
\xymatrix{
\TT k\T M\times\TT k\g \ar@{-|>}[rr]^{\kappa_{k,M}\times\kappa_k^\g\quad }&& \T\TT kM\times \T\TT{k-1}\g.
}
\end{equation}
The anchor map, in turn, is the obvious projection $\rho_k:\TT kP/G\approx \TT kM\times \TT kG/G\ra \TT kM$.


\paragraph{Higher action algebroid on $\T^k G\times M/G$.}
Let $r: G\times M\to M$ be a left action of a Lie group $G$ on a manifold $M$.
The \emph{action groupoid} (see, e.g., \cite{Crainic_Fernandes_int_lie_bra_2003}) associated with $r$ is
$\Gamma_r = G\times M$ where an element $(g,m)\in\Gamma$ is considered as an arrow from $m$ to $g\cdot m$. Hence
$\alpha(g, m)=m$, $\beta(g, m)=g\cdot m$ and
the composition of arrows in $\Gamma_r$ is given by
$(h, y) (g, x) = (hg, x)$ if $g\cdot x = y$.

Now we would like to define the structure of a higher algebroid on $\A^k(\Gamma_r)$ associated with the action groupoid. We will call it a \emph{higher action algebroid}. We have
$$\mathcal{A}^k(\Gamma_r) \simeq \TT k\Gamma_r^\alpha\big|_M =   \TT k_e G \times M
\simeq  \TT{k-1}\g\times M,$$
where $\g$ is the Lie algebra of $G$. The reduction map $\RR^k: \TT k\Gamma_r^\alpha \to \A^k(\Gamma_r)$ takes
$(\jet k_{t=0}\gamma(t), m)\in \TT kG\times M$ to $(\jet k_{t=0}\gamma(t)\gamma(0)^{-1}, \gamma(0) m)\in \TT k_e G\times M$.
By Proposition \ref{prop:Decomposition_of_TkG}, we can identify the latter
with an element of $\TT{k-1}\g\times M$. Therefore $\A^k(\Gamma_r)\simeq\TT{k-1}\g\times M$.

Using \eqref{eqn:rho_k_A} we find that
$\rho_k = \TT k\beta\big|_{\A^k(\Gamma_r)}: \A^k(\Gamma_r) \to \TT kM$ is given by
$$
\rho_k(\jet k_{t=0}\gamma(t), m) = \jet k_{t=0} \gamma(t)m
$$
for any curve $\gamma \in G$ such that $\gamma(0)=e$.
In other  words, $\rho_k$ takes a pair
$(z^k,m)\in \T_e^k G\times M$ to $\wh z^k(m)$ -- the value of the \emph{fundamental field}
$\wh{z}^k \in \Sec(M,\TT kM)$ (associated with the action of $G$ on $M$ and the higher tangent vector $z^k$) taken at $m\in M$.

Diagram \eqref{eqn:kappa_k_A} defining the canonical relation $\kappa_k: \TT k\A(\Gamma_r) \relto \T\A^k(\Gamma_r)$
reads as
$$\xymatrix{
\TT k\T G \times M \ar[d]_{\TT k\RR^1} \ar[rr]^{\kappa_{k, G}\times \id_M} && \T\TT k G\times M \ar[d]^{\T\RR^k} \\
\TT k \g \times \TT kM \ar@{--|>}[rr]^{\kappa_k} && **[r] \T(\TT k_e G\times M)
\simeq \T\TT{k-1} \g \times\T M.
}$$
Let now $A = (\jet k_{t=0}\jet 1_{s=0} g(t,s), m)$ be an arbitrary elements of $\TT k\T G\times M$. Then
\begin{align}\label{eqn:TkRatA}
&(X, v^k) : = \TT k\RR^1(A) =
(\jet k_{t=0}\jet 1_{s=0} g(t,s)g(t, 0)^{-1}, \jet k_{t=0} g(t, 0) m)
\intertext{whereas}
\label{eqn:TRkatA}
&(Y, w^1) := \T\RR^k(\kappa_{k, \Gamma} (A)) =
(\jet 1_{s=0}\jet k_{t=0} g(t,s)g(0, s)^{-1}, \jet 1_{s=0}g(0, s)m).
\end{align}
The associated diagram reads
$$\xymatrix{
**[l] (X, v^k)\in \T^k\g\times \T^k M \ar@{-|>}[r]^{\kappa_k} \ar@<-3ex>[d]^{\text{pr}_2} &  **[r] \T\TT{k-1}\g\times \T M  \ni (Y, w^1) \ar@<7ex>[d]^{\tau_{\TT{k-1}\g}\times \tau_M}\\
**[l] v^k\in \T^k M & **[r] \TT{k-1}\g\times M \ni (y^k, n). \ar[l]_{\rho_k}
}$$

From \eqref{eqn:TkRatA} and \eqref{eqn:TRkatA} we see that vectors $X$ and $Y$ are $\kappa^\g_k$-related (cf. Proposition \ref{prop:kappa_k_g} and Lemma \ref{lem:rel_g}). Obviously vectors $w^1\in\T M$ and $v^k\in\TT kM$ lie over the same point $n=g(0,0)m\in M$.
Denote by $x^1\in\g$ and by $y^k\in \TT{k-1}\g$ points such that $X\in \TT k_{x^1}\g$ and $Y\in \T_{y^k}\TT{k-1}\g$
(i.e.,
$x^1 = \jet 1_{s=0} g(0, s) g(0,0)^{-1}$ and $y^k = \jet k_{t=0} g(t, 0)g(0,0)^{-1}$). Observe that $v^k=\wh y^k(n)$ and $w^1=\wh x^1(n)$.

To sum up, vectors $(X, v^k)\in\TT k\g\times\TT k_n M$ and $(Y, w^1)\in\T\TT{k-1}\g\times\T_n M$ are $\kappa_k$-related if and only if
$$ v^k=\wh{y}^k(n),\quad w^1=\wh{x}^1(n), \quad\text{and}\quad
Y\in \kappa^\g_k(X).
$$


\subsection{Applications to variational calculus}\label{ssec:examples_var_calc}

Now we will describe variational calculus in several special cases. Our examples include standard higher-order system on a manifold $M$, $\st{1}$ and $\nd{2}$-order systems on an almost Lie algebroid, a higher-order invariant system on a Lie group and a $\nd{2}$-order $G$-invariant system on a principal $G$-bundle.


\paragraph{Standard variational calculus on $\TT k M$.}
We shall now derive the force and momentum from formulas \eqref{eqn:force} and \eqref{eqn:momentum} for a system on $\TT kM$. We will be using standard adapted coordinates on $\TT\ast \TT kM$, $\TT k \TT\ast M$ and $\TT{k-1}\TT\ast M$ as introduced in Subsection \ref{ssec:notation}.

Consider a Lagrangian function $L:\TT kM\ra\R$ and a path  $\gamma(t)\sim (x^a(t))\in M$. In local coordinates $(x^{a,(\alpha)},p_{a,(\alpha)}=\pa_{x^{a,(\alpha)}})_{\alpha=0,1,\hdots, k}$ on $\TT\ast\TT kM$ the differential $\dd L(t^k\gamma(t)) \in \TT\ast\TT kM$ is
given by $p_{a, (\alpha)} = \frac{\partial L}{\partial x^{a,(\alpha)}}(t^k\gamma(t))$, hence using \eqref{eqn:eps_kM} we get that
$\Lambda_L(\jet k\gamma(t))\in \TT k\TT\ast M$ is locally given by $\left(x^{a,(\alpha)}(t), p_a^{(\alpha)}(t)\right)_{\alpha=0,\hdots,k}$, where
$$
p_a^{(\alpha)}(t) =  \binom{k}{\alpha}^{-1}\frac{\partial L}{\partial x^{a,(k-\alpha)}}(\jet k\gamma(t)), \quad\text{and}\quad x^{a, (\alpha)}(t) = \frac{\dd^\alpha x^a(t)}{\dd t^\alpha}.
$$
Therefore $\jet k\Lambda_L(\jet k\gamma(t))\sim\left(x^{a,(\gamma)}(t),p_a^{(\alpha, \beta)}(t)\right)$, where $\alpha,\beta=0,\hdots,k$, $\gamma=0,\hdots,2k$ and $p_a^{(\alpha, \beta)}(t) = \binom{k}{\beta}^{-1}\frac{\dd^\alpha}{\dd t^\alpha}\frac{\partial L}{\partial x^{a,(k-\beta)}}$, $x^{a, (\gamma)}(t) = \frac{\dd^\gamma x^a(t)}{\dd t^\gamma}$. Using formulas
\eqref{eqn:Upsilon_local} and \eqref{eqn:force} we find that the force $F_{L,\jet k\gamma}(t)\in\TT\ast M$  has the following  well-known local form $(x^a(t),F_a(t))$, where
\begin{equation}\label{eqn:EL_local}
F_{a}(t)=\sum_{\alpha=0}^k (-1)^\alpha \frac{\dd^\alpha}{\dd t^\alpha}\left(\frac{\partial L}{\partial x^{a,(\alpha)}}(t^k\gamma(t))\right).
\end{equation}

To get the momentum $\M_{L,\jet k\gamma}(t)\in\TT{k-1}\TT\ast M$, given locally by $\left(x^{a,(\alpha)}(t),m_a^{(\alpha)}(t)\right)_{\alpha=0,\hdots,k-1}$ one  observes that $\lambda_L(t)\sim\left(x^{a,(\alpha)}(t), p_a^{(\alpha)}(t)\right)_{\alpha=0,\hdots,k-1}$, where $x^{a,(\alpha)}(t)$ and $p_a^{(\alpha)}(t)$ are as above. Now using formulas \eqref{eqn:momenta_coefficients} and \eqref{eqn:momentum} one easily gets
\begin{equation}\label{eqn:momentum_standard}
m_a^{(\gamma)}(t)=\sum_{\alpha+\beta=\gamma}(-1)^\alpha\frac{\dd^\alpha}{\dd t^\alpha}\left(\frac{\pa L}{\pa x^{a,(k-\beta)}}\right).
\end{equation}


\paragraph{$\st{1}$ and $\nd{2}$-order geometric mechanics on algebroids.}
Let $(\tau:E\ra M,\kappa)$ be an almost Lie algebroid. Our goal now is to describe $\st{1}$ and $\nd{2}$-order forces and momenta on  $(E,\kappa)$.

Let us begin with a $\st{1}$-order system defined by a Lagrangian function $L:E=\E 1\ra\R$. Let $a(t)\in E$ be an admissible trajectory. In this case the ``momentum'' \eqref{eqn:momenta_gen_diagram} associated with $a(t)$ is simply $\lambda_L(a(t))=\tau_{\E\ast}(\Lambda_L(a(t)))\in \E\ast$. Since by \eqref{eqn:Upsilon} we have
$$\<\Upsilon_{1,\pi}\left(\jet 1\Lambda_L(a(t))\right),\xi>=\<\Lambda_L(a(t))-\jet 1\lambda_L(a(t)),\jet 1\xi>,$$
EL equations in this special case read as
$$\Lambda_L(a(t))=\jet 1\lambda_L(a(t)).$$
Let $(x^a,y^i)$ be local coordinates and $\rho^a_i(x)$, $c^i_{jk}(x)$ structure functions of $(E,\kappa)$ as introduced in Subsection \ref{ssec:algebroids}. The admissible trajectory $a(t)$ corresponds to $(x^a(t),y^i(t))$ such that $\frac{\dd}{\dd t}x^a(t)=\rho^a_i(x)y^i(t)$. In this setting the momentum reads $(x^a,\frac{\pa L}{\pa y^i})$ and the EL equations are
$$\left(\del^k_i\frac{\dd}{\dd t}+c^k_{ij}(x)y^j\right)\frac{\pa L}{\pa y^k}-\rho^a_i(x)\frac{\pa L}{\pa x^a}=0.$$
Where $L=L(x,y)$ and $x^a$, $y^i$ depend on $t$. This agrees with the results from \cite{KG_JG_var_calc_alg_2008,KG_JG_PU_geom_mech_alg_2006}.

Let us now describe a $\nd{2}$-order system on $(E,\kappa)$. As observed in Theorem \ref{thm:prop_Ek} \eqref{lem:i_a}, bundle $\tau^2:\E 2\ra M$ has natural graded coordinates $(x^a,y^i,y^{j,(1)})$ induced by $(x^a,y^i)$. Choose now an admissible trajectory $a^2(t)\sim(x^a(t),y^i(t),y^{j,(1)}(t))$ (i.e., $\frac{\dd}{\dd t}y^i(t)=y^{i,(1)}$ and $\frac{\dd}{\dd t}x^a(t)=\rho^a_i(x)y^i(t)$). For a Lagrangian function $L:\E 2\ra\R$ the force $\F_{L,a^2}(t)=(x^a(t),F_i(t))\in \E\ast$ and momentum $\M_{L,a^2}(t)\sim(x^a(t),\xi_i(t),\frac{\dd}{\dd t} x^a(t),\dot\xi_i(t))\in \T \E\ast$ read as
\begin{align}\label{eqn:EL_algebroid_2}
&F_i(t)=\left(\del^k_i\frac{\dd}{\dd t}+c^k_{ij}y^j\right)\left(\frac{\dd}{\dd t}\frac{\pa L}{\pa y^{k,(1)}}-\frac{\pa L}{\pa y^k}\right)+\rho^a_i(x)\frac{\pa L}{\pa x^a}\\
&\xi_i(t)=\frac{\pa L}{\pa y^{i,(1)}}\quad\text{and}\quad\dot\xi_i(t)=\left(\del^k_i\frac{\dd}{\dd t}+c^k_{ij}(x)y^j\right)\frac{\pa L}{\pa y^{k,(1)}}-\frac{\pa L}{\pa y^k},
\end{align}
where $L=L(x,y,y^{(1)})$ and $x^a$, $y^i$, $y^{i,(1)}$ depend on $t$. We leave the proof of these formulas to the reader.


\paragraph{Higher-order Euler-Poincare equations.}

In this paragraph we will reproduce higher-order Euler-Poincar\'{e} equations from \cite{Gay_Holm_Inn_inv_ho_var_probl_2012} using \eqref{eqn:EL} for a higher Lie algebra $(\TT{k-1}\g,\kappa_k^\g)$. Throughout this paragraph we will use the identification $\TT{k-1}\g\approx\g^{\oplus k}$ and similar ones.

First we need to calculate morphism $\varepsilon^\g_k:\TT\ast\TT{k-1}\g\ra\TT{k}\g$ dual to the canonical relation $\kappa_k^\g$.
Consider an element $$\Xi=(a^0,\hdots,a^{k-1},\xi_0,\cdots\xi_{k-1})\in\TT\ast\TT{k-1}\g\approx\TT{k-1}\g\times\TT{k-1}\g^\ast\approx\g^{\oplus k}\oplus(\g^\ast)^{\oplus k}.$$
It turns out that under the canonical identification $\TT k\g^\ast\approx\bigoplus_{\beta=0}^k\g^\ast_{(\beta)}\ni(\zeta^{(\beta)})$
\begin{equation}\label{eqn:eps_k_g}
\zeta^{(\beta)}\left(\eps^\g_k(\Xi)\right)=\binom k\beta ^{-1}\left[\xi_{k-1-\beta}+\sum_{s=0}^{\beta-1}\binom{k-\beta+s}s \ad^\ast_{a^s}\xi_{k-\beta+s}\right],
\end{equation}
where $\ad^\ast$ denotes the coadjoint representation of $\g$ on $\g^\ast$ and we put $\xi_{-1}=0$.

To prove this fact observe that by \eqref{eqn:eps_k_direct} we have $\eps_k^\g=\left(\iota^{k-1,1}_\g\right)^\ast\circ\TT{k-1}\eps^\g\circ\eps_{k-1,\g}$. Now by \eqref{eqn:eps_kM}
$$\eps_{k-1,\g}(\Xi)=\left(a^0,\hdots,a^{k-1},\binom{k-1} 0^{-1}\xi_{k-1},\hdots,\binom{k-1}{k-1}^{-1}\xi_0\right)\in \TT{k-1}\TT\ast\g\approx\g^{\oplus k}\oplus(\g^\ast)^{\oplus k}.$$

Since $\eps^\g$ maps $(a,\xi)\in\TT\ast\g\approx\g\oplus\g^\ast$ to $(\xi,\ad^\ast_a\xi)\in\T\g^\ast\approx\g^\ast\oplus\g^\ast$, under the canonical identification $\TT{k-1}\T\g^\ast=\bigoplus_{\substack{\epsilon=0,1\\ \alpha=0,\hdots,k-1}}\g^\ast_{(\epsilon,\alpha)}\ni\left(\xi^{(\epsilon,\alpha)}\right)$, the image $\TT{k-1}\eps^\g\left(a^0,\hdots,a^{k-1},\xi_0,\hdots,\xi_{k-1}\right)$ is given by
$$\xi^{(0,\alpha)}=\xi_\alpha;\qquad \xi^{(1,\alpha)}=\sum_{s=0}^\alpha\binom \alpha s\ad^\ast_{a^s}\xi_{\alpha-s}.$$

Finally, the projection $\left(\iota^{k-1,1}_\g\right)^\ast:\TT{k-1}\T\g^\ast\approx\bigoplus_{\eps,\alpha}\g^\ast_{(\eps,\alpha)}\lra\TT k\g^\ast\approx \bigoplus_{\beta=0}^k\g^\ast_{(\beta)}\ni\zeta^{(\beta)}$ reads as
$$\zeta^{(\beta)}\left(\left(\iota^{k-1,1}_\g\right)^\ast\left(\xi^{(\eps,\alpha)}\right)\right)=\binom k\beta^{-1}\left[\binom{k-1}\beta\xi^{(0,\beta)}+\binom{k-1}{\beta-1}\xi^{(1,\beta-1)}\right].$$
Composing the three maps described above we get formula \eqref{eqn:eps_k_g} describing $\eps_k^\g=\left(\iota^{k-1,1}_\g\right)^\ast\circ\TT{k-1}\eps^\g\circ\eps_{k-1,\g}$.
This formula can also be derived from \eqref{eqn:epsilonG} with help of \eqref{eqn:weil_bracket} for the Lie algebra structure on $\TT{k-1}\g$.
\bigskip

Consider now a Lagrangian function $L:\TT{k-1}\g\approx\g^{\oplus k}\lra\R$ and let $a(t)=\left(a^0(t),a^1(t),\hdots a^{k-1}(t)\right)\in\TT{k-1}\g$ be an admissible path. This implies that $a^i(t)=\frac{d^i\ }{d t^i}a^0(t)$. The differential of the Lagrangian is given by
$$\dd L(a(t))=\left(a^0(t),\hdots,a^k(t),\xi_0(t),\hdots,\xi_{k-1}(t)\right),$$
where $\xi_i(t)=\frac{\pa L}{\pa a^i}(a(t))$. Hence $\Lambda_L(a(t))=\eps_k^\g(\dd L(a(t))$ is given by formula \eqref{eqn:eps_k_g} with $a^i$ and $\xi_i$ as above. From \eqref{eqn:Upsilon_local} we know that $\Upsilon_{k,\g^\ast}\left(\jet k_t\Lambda_L(a(t))\right)$ equals
\begin{align*}
\sum_{\beta=0}^k\binom k\beta(-1)^\beta \frac{\dd^\beta\ }{\dd t^\beta}\left(\zeta_{k-\beta}(\Lambda_L(a(t)))\right)\overset{\eqref{eqn:eps_k_g}}= \sum_{\beta=0}^k(-1)^\beta \frac{\dd^\beta\ }{\dd t^\beta}\left[\xi_{\beta-1}+\sum_{s=0}^{k-\beta-1}\binom{\beta+s} s \ad^\ast_{a^s}\xi_{\beta+s}\right]=\\
\sum_{\beta=0}^k(-1)^\beta \frac{\dd^\beta\ }{\dd t^\beta}\xi_{\beta-1}+\sum_{\beta=0}^k(-1)^\beta \frac{\dd^\beta\ }{\dd t^\beta}\left[\sum_{s=0}^{k-\beta-1}\binom{\beta+s} s \ad^\ast_{a^s}\xi_{\beta+s}\right]=:S_1+S_2.
\end{align*}
The first summand is simply
$$S_1=-\frac{\dd\ }{\dd t}\left[\sum_{\alpha=0}^{k-1}(-1)^\alpha \frac{\dd^\alpha\ }{\dd t^\alpha}\xi_{\alpha}\right].$$
Let us now concentrate on the second summand.
\begin{align*}
S_2&=\sum_{\beta=0}^k\sum_{s=0}^{k-\beta-1}(-1)^\beta\binom{\beta+s} s \left(\frac{\dd\ }{\dd t}\right)^\beta \ad^\ast_{a^s}\xi_{\beta+s}=\sum_{\alpha=0}^{k-1}\sum_{s=0}^\alpha(-1)^{\alpha-s}\binom{\alpha} s \left(\frac{\dd\ }{\dd t}\right)^{\alpha-s} \ad^\ast_{a^s}\xi_{\alpha},
\end{align*}
where in the last passage we substituted $\alpha:=\beta+s$. Now we can perform the differentiation (note that $\frac{\dd\ }{\dd t}\ad^\ast_{a^s}\xi=\ad^\ast_{a^{s+1}}\xi+\ad^\ast_{a^s}\frac{\dd\ }{\dd t}\xi$ to get:
\begin{align*}
S_2&=\sum_{\alpha=0}^{k-1}\sum_{s=0}^\alpha(-1)^{\alpha-s}\binom{\alpha} s \sum_{i=0}^{\alpha-s}\binom {\alpha-s}i \ad^\ast_{a^{\alpha-i}}\left(\frac{\dd\ }{\dd t}\right)^i\xi_{\alpha}.
\intertext{Denote $\gamma:=\alpha-i$ and observe that $\binom \alpha s\binom {\alpha-s} i=\binom \alpha i\binom {\alpha-i} s=\binom \alpha\gamma\binom \gamma s$. Thus $S_2$ writes as }
S_2&=\sum_{\alpha=0}^{k-1} (-1)^{\alpha} \sum_{\gamma=0}^\alpha\binom{\alpha} \gamma \sum_{s=0}^{\gamma}(-1)^s\binom {\gamma}s \ad^\ast_{a^{\gamma}}\left(\frac{\dd\ }{\dd t}\right)^{\alpha-\gamma}\xi_{\alpha},
\intertext{which simplifies to}
&\sum_{\alpha=0}^{k-1} (-1)^{\alpha} \sum_{\gamma=0}^\alpha\binom{\alpha} \gamma (1-1)^\gamma \ad^\ast_{a^{\gamma}}\left(\frac{\dd\ }{\dd t}\right)^{\alpha-\gamma}\xi_{\alpha}.
\end{align*}
The summands are non-trivial only if $\gamma=0$, hence
\begin{align*}
S_2&=\sum_{\alpha=0}^{k-1} (-1)^{\alpha} \ad^\ast_{a^{0}}\left(\frac{\dd\ }{\dd t}\right)^{\alpha}\xi_{\alpha}.
\end{align*}
Finally getting together $S_1$ and $S_2$ we get
$$\Upsilon_{k,\g^\ast}\left(\jet k_t\Lambda_L(a(t))\right)=\left(-\frac{\dd\ }{\dd t}+\ad_{a^0(t)}^\ast\right)\left[\sum_{\alpha=0}^{k-1} (-1)^{\alpha} \frac{\dd^\alpha\ }{\dd t^\alpha}\frac{\pa L}{\pa a^\alpha}(a(t))\right].$$
Equation $\Upsilon_{k,\g^\ast}\left(\jet k_t\Lambda_L(a(t))\right)=0$ is the higher-order Euler-Poincar\'{e} equation known from literature \cite{Gay_Holm_Inn_inv_ho_var_probl_2012}. Observe also that the case $k=2$ agrees with formula \eqref{eqn:EL_algebroid_2} announced in the previous paragraph.


\paragraph{Motivating Example \ref{ex:main} revisited -- $\nd{2}$-order Hamel equations.}
Recall that in the last paragraph of Subsection \ref{ssec:red_var_prob} we derived the local form of the reduced EL equations for a $\nd{2}$-order invariant system on a principal $G$-bundle $p:P\ra M$. These equations can be easily reproduced as generalized EL equations for a reduced Lagrangian $L:\TT 2P/G\ra\R$ on the $\nd{2}$-order Atiyah algebroid $(\TT 2P/G,\kappa_2)$.

Indeed, observe that formula \eqref{eqn:kappa_atiyah_triv} assures us that locally the $\nd{2}$-order algebroid structure on $\TT 2P/G$ is a product of natural $\nd{2}$-order algebroid structures on $\TT 2 M$ and $\TT 2G/G\approx\T\g$. Since the map $\Upsilon_{k,\cdot}$ preserves products, the $\nd{2}$-order EL equations \eqref{eqn:EL} on $\TT 2P/G$ can be decomposed as the standard $\nd{2}$-order EL equations on $M$ and the  $\nd{2}$-order Euler-Poincar\'{e} equations on $\g$ derived in the previous paragraph. This precisely the form of \eqref{eqn:EL_red_atiyah}.


\paragraph{Invariant problems on higher action groupoids.}
Consider a left Lie group action $r:G\times M\ra M$ on a manifold $M$ and
let $\wt{L}: \TT kG\times M\ra\R $ be a Lagrangian function
which is $G$-invariant under the action of $G$ on both $\TT kG$ and $M$, i.e.,
\begin{equation*}
\wt{L}(\jet k_{t=0} \gamma(t)\cdot g, g^{-1}\cdot m) =  \wt{L}(\jet k_{t=0}\gamma(t), m),
\end{equation*}
for any $g\in G$, $m\in M$ and $\gamma(t)\in G$. By our considerations
from the previous subsection, such a $\wt{L}$  reduces to a Lagrangian $L$ defined on the higher action algebroid $\A^k(G\times M)\approx \TT{k-1}\g\times M$. The Euler-Lagrange equations in this case were derived in
(\cite{Gay_Holm_Inn_inv_ho_var_probl_2012}, \S 3.1). One can repeat this result using \eqref{eqn:EL} in a similar manner as in the derivation of Euler-Poincar\'{e} equations in one of the previous paragraphs.



\appendix
\newpage
\section{Geometric integration by parts}\label{app:geom_lem}
The aim of  this part is to introduce (after \cite{MJ_MR_higher_var_calc_2013}) the construction of two vector bundle maps $\Upsilon_{k,\sigma}$ and $\momenta_{k,\sigma}$. The first of them plays a crucial role in the geometric construction of the Euler-Lagrange equations, whereas the second is crucial for the derivation of momenta associated with a given variational problem.   \bigskip

\paragraph{Bundles of semi-holonomic vectors.}

\begin{df}
Consider a vector bundle $\sigma:E\ra M$ and the induced vector bundle $\TT k\TT k\sigma:\TT k\TT k E\ra\TT k\TT kM$. The subset
$$\Tsemihol{k,k} E:=(\iota^{k,k}_M)^\ast\TT k\TT kE=\{X\in\TT k\TT kE: \TT k\TT k\sigma(X)\in\TT {2k} M\subset\TT k\TT kM\}$$
(consisting of all $(k,k)$-velocities in $E$ lying over $2k$-velocities (holonomic vectors) in $M$) is a vector subbundle in $\TT k\TT k E$ over $\TT {2k} M$. Denote by $\Thol{k} \sigma$ the restriction of the projection $\TT k\TT k \sigma$ to $\Thol{k} E$. Bundle $\Thol{k} \sigma$ will be called the bundle of \emph{semi-holonomic vectors} in $\TT k \TT k E$. More generally, we can speak about semi-holonomic vectors in  $\TT {n_1, \ldots, n_r} E$ where $n_1, \ldots, n_r$ are non-negative integers. Such vectors, by definition, project to $\T^{n_1+\ldots n_r} M$ under $\TT {n_1, \ldots, n_r} \sigma$. A subbundle of semi-holonomic vectors in $\Tt k E=\TT{1,\hdots,1}E$ will be denoted by $\Ttsemihol{k} E$.
\end{df}

In \cite{MJ_MR_higher_var_calc_2013} we observed that although there is no canonical projection $\Tt kE\ra\TT kE$, there exists a natural projection $\PP_k: \Ttsemihol{k} E\ra\TT k E$ defined by the formula
\begin{equation}\label{eqn:def_P_k}
\<\PP_k(X),\Phi>_{\TT k\sigma}:=\<X,\Phi>_{\Tt k\sigma},
\end{equation}
where $X$ is an element of $\Tt kE$ lying over $v^k\in\TT kM\subset\Tt kM$ and
 $\Phi\in\TT k\E\ast\subset\Tt k\E\ast$ is any element lying over $v^k$.
Locally $\PP_k$ is given by
$$\PP_k\left(x^{a,(\epsilon)},y^{i,(\epsilon)}\right)=\left(x^{a,(\alpha)},\overline{y}^{i,(\alpha)}\right),$$
where $\overline{y}^{i,(\alpha)}=\binom k\alpha^{-1}\Sigma_{|\epsilon|=\alpha}y^{i,(\epsilon)}$ is the arithmetic average of all coordinates of total degree $\alpha$.

\paragraph{Maps $\Upsilon_{k,\sigma}$ and $\momenta_{k,\sigma}$.}

Consider now an element
$\Phi=:\Phi^{(k,k)}\in \Thol{k} E\subset \TT k\TT k E$ and denote its projections to lower order jets by
$$\Phi^{(m,n)}:=\tauE{(k,k)}{(m,n),E}(\Phi)\in\TT m\TT nE.$$
Observe that since $\Phi$ lies over some element, say $v^{2k}$, in $\TT {2k} M$, then all elements $\Phi^{(m,n)}$ project under $\TT m\TT n\sigma$ to a fixed element $v^{m+n}\in \TT {m+n}M\subset \TT m\TT n M$ independently on the factors in the sum $m+n$. In particular, different elements $\Phi^{(m,n)}$ with $m+n$ fixed (belonging \emph{a priori} to different bundles $\TT m\TT n E$) can be added in a vector bundle $\Tt {m+n}\sigma:\Tt {m+n} E\ra\Tt {m+n} M$ which contains all of them. This observation assures us that the sums in formulas \eqref{eqn:Upsilon} and \eqref{eqn:moment_map} in the theorem below make sense.


\begin{thm}[\cite{MJ_MR_higher_var_calc_2013}]\label{lem:k_hol}
There exists a pair of canonical vector bundle morphism
\begin{align*}
&\Upsilon_{k,\sigma}:\Thol{k}E\lra E\quad\text{over}\quad \tau^{2k}_M:\TT {2k} M\lra M
\intertext{and}
&\momenta_{k,\sigma}:\Thol{k}E\lra\TT k E\quad\text{over}\quad \tau^{2k}_{k,M}:\TT {2k} M\lra \TT kM
\end{align*}
characterized by the following properties. For every $\Phi=\Phi^{(k,k)}$ lying over $v^{2k}\in\TT{2k}M$ and every $\jet k\xi\in \TT k_\xi E^\ast\subset \Tt k E^\ast$ lying over $v^k=\tauM{2k}k(v^{2k})\in \TT k M$
\begin{align}
&\label{eqn:Upsilon}
\<\Upsilon_{k,\sigma}(\Phi),\xi>_\sigma=\<\Phi^{(0,k)}-\binom k 1\Phi^{(1, k-1)}+\binom k 2\Phi^{(2,k-2)}+\hdots+(-1)^k\Phi^{(k,0)}, \jet k\xi>_{\Tt k\sigma}
\intertext{(in particular the value of the above formula does not depend on the choice of $\jet k\xi$) and}
&\label{eqn:moment_map}
\momenta_{k,\sigma}\left(\Phi^{(k,k)}\right):=P_k\left[\binom{k+1}1\Phi^{(0,k)}-\binom{k+1}2\Phi^{(1, k-1,1)}+\hdots+(-1)^{k+1}\binom{k+1}{k+1}\Phi^{(k, 0)}\right]
\end{align}

Maps $\Upsilon_{k,\sigma}$ and $\momenta_{k-1,\sigma}$ satisfy the following ''bundle-theoretic integration-by-parts formula''
\begin{equation}\label{eqn:green}
\<\Phi^{(0,k)},\jet k\xi>_{\Tt k\sigma}=\<\Upsilon_{k,\sigma}\left(\Phi^{(k,k)}\right),\xi>_{\sigma}+\<\T \momenta_{k-1,\sigma}\left(\Phi^{(k,k-1)}\right),\jet k\xi>_{\Tt k\sigma},
\end{equation}
where in the last term we consider $\Phi^{(k,k-1)}$ as a semi-holonomic vector in $\T\Tsemihol{k-1, k-1}\T E\supset \Tsemihol {k, k-1} E$.

The local form of $\Upsilon_{k,\sigma}$ is the following:
\begin{equation}\label{eqn:Upsilon_local}
\Upsilon_{k, \sigma}(\Phi) = \left(x^a, \sum_{\alpha=0}^k (-1)^\alpha \binom{k}{\alpha} y^{i,(\alpha,k-\alpha)}\right).
\end{equation}
Here $(x^a,y^i)$
are linear coordinates on $E$ and, in canonical induced graded coordinates on $\Thol{k}E$,  $\Phi=\Phi^{(k,k)}\sim\left(x^{a,(\alpha)},y^{j,(\beta,\gamma )}\right)$ where $\alpha=0,1,\hdots,2k$ and $\beta,\gamma=0,1,\hdots,k$.

The local expression of $\momenta_{k,\sigma}$ is
\begin{equation}\label{eqn:momenta_local}
\momenta_{k,\sigma}(\Phi) = (x^{a, (\alpha)}, y^{i, (\beta)} = \sum_{a+b=\beta} (-1)^a \binom{k+1}b y^{i, (a,b)}).
\end{equation}

\end{thm}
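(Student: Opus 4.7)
The plan is to establish $\Upsilon_{k,\sigma}$ and $\momenta_{k,\sigma}$ through a combination of intrinsic pairing arguments and binomial combinatorics, and then to deduce the integration-by-parts identity \eqref{eqn:green} from Pascal's identity. The key structural point underlying the whole construction is that, because $\Phi\in\Thol{k}E$ projects to some $v^{2k}\in\TT{2k}M$, every projection $\Phi^{(\alpha,k-\alpha)}$ covers the same base $k$-velocity in $\TT kM$, so the various $\Phi^{(\alpha,k-\alpha)}$ live in a common vector bundle over $\TT kM$ (namely $\Tt{k}\sigma$) and may be added with signs.

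First I would verify that the right-hand side of \eqref{eqn:Upsilon} depends only on $\xi\in E^\ast$ and not on the extension $\jet k\xi$ over $v^k$. Any two such lifts differ by an element of $\TT kE^\ast$ whose $E^\ast$-component vanishes; inserting such a difference into the alternating sum and applying the iterated pairing formula \eqref{eqn:pairingOne} together with the identity $\sum_\alpha(-1)^\alpha\binom{k}{\alpha}=0$ (for $k\geq 1$) kills the offending contribution. Linearity in $\Phi$ is manifest, giving a well-defined vector bundle morphism $\Upsilon_{k,\sigma}\colon\Thol{k}E\to E$ over $\tau^{2k}_M$. Evaluating on a basis of local sections $\xi$ of $\sigma^\ast$ and using the expansion \eqref{eqn:pairingTwo} then reproduces the coordinate expression \eqref{eqn:Upsilon_local}.

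For $\momenta_{k,\sigma}$ the first task is to confirm that the bracketed expression inside $\PP_k$ in \eqref{eqn:moment_map} really is semi-holonomic, so that \eqref{eqn:def_P_k} applies. This reduces to checking that its $\Tt{k}\sigma$-projection lies in the holonomic subbundle $\TT kM\subset\Tt kM$. Since each $\Phi^{(\alpha,k-\alpha)}$ projects to the canonical partial holonomic lift of $v^{2k}$, an argument analogous to the one used for $\Upsilon_{k,\sigma}$, but with the binomial identity applied to $\binom{k+1}{\alpha+1}$, places the alternating combination inside $\TT kM$. Unpacking \eqref{eqn:def_P_k} yields the local form \eqref{eqn:momenta_local}.

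The substantive step is the integration-by-parts identity \eqref{eqn:green}. I would prove it by induction on $k$. The base case $k=1$ reads
\[
\<\Phi^{(0,1)},\jet 1\xi>_{\T\sigma}=\<\Phi^{(0,1)}-\Phi^{(1,0)},\xi>_\sigma+\<\T\momenta_{0,\sigma}(\Phi^{(1,0)}),\jet 1\xi>_{\T\sigma},
\]
which is essentially the Leibniz rule since $\momenta_{0,\sigma}$ is the identity. For the inductive step I would regard $\Phi^{(k,k-1)}$ as a semi-holonomic element of $\T\Tsemihol{k-1,k-1}\T E$ via the canonical inclusion $\TT k\TT{k-1}E\subset\T\TT{k-1}\T E$, apply the inductive hypothesis inside the tangent functor, and match coefficients through Pascal's identity $\binom{k-1}{\alpha-1}+\binom{k-1}{\alpha}=\binom{k}{\alpha}$, which produces the $\Upsilon_{k,\sigma}$-coefficients from the $\Upsilon_{k-1,\sigma}$-coefficients and, with an index shift, the $\momenta_{k-1,\sigma}$-coefficients. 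A parallel, purely computational route is to expand both sides of \eqref{eqn:green} in adapted coordinates by \eqref{eqn:Upsilon_local} and \eqref{eqn:momenta_local} and verify term by term using the Leibniz rule. The main obstacle is the combinatorial book-keeping: specifically, the semi-holonomicity check for the argument of $\PP_k$ (without which the definition of $\momenta_{k,\sigma}$ fails to make sense) and the correct reinterpretation of $\Phi^{(k,k-1)}$ as an element of $\T\Tsemihol{k-1,k-1}\T E$ in the last term of \eqref{eqn:green}; both amount to tracking which natural inclusions between iterated and higher tangent bundles are being used, but they are where errors are easiest to make.
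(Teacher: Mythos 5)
Your plan is in the same computational spirit as the source, but be aware that the paper itself proves only the local formula \eqref{eqn:momenta_local} here, importing everything else (existence, \eqref{eqn:Upsilon}, \eqref{eqn:green}, \eqref{eqn:Upsilon_local}) from \cite{MJ_MR_higher_var_calc_2013}. Measured against what an actual proof requires, your sketch has two concrete problems. First, the independence of \eqref{eqn:Upsilon} of the lift $\jet k\xi$ does \emph{not} follow from $\sum_\alpha(-1)^\alpha\binom{k}{\alpha}=0$ alone: the pairings $\langle\Phi^{(j,k-j)},\cdot\rangle_{\Tt k\sigma}$ genuinely depend on $j$ even on holonomic lifts whose $E^\ast$-component vanishes. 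Already for $k=2$, the coefficient of $\xi^{(1)}_i$ is $2y^{i,(0,1)}$ for $j=0$ but $y^{i,(0,1)}+y^{i,(1,0)}$ for $j=1$ (semi-holonomicity constrains only the base coordinates, not the $y$'s), so you cannot pull a common factor out of the alternating sum. What is needed is the weighted vanishing $\sum_{j=a}^{k-b}(-1)^j\binom{k}{j}\binom{j}{a}\binom{k-j}{b}=0$ for $a+b<k$, which reduces to \eqref{eqn:binom_identity_A} of Lemma \ref{lem:binom_coeff} via the absorption identity $\binom{k}{j}\binom{k-j}{b}=\binom{k}{b}\binom{k-b}{j}$; the case $a+b=k$ then yields \eqref{eqn:Upsilon_local}. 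Second, your semi-holonomicity check for the argument of $\PP_k$ is a non-issue and the proposed fix is misguided: all summands $\Phi^{(j,k-j)}$ lie in one and the same fibre of $\Tt k\sigma$ over the image of $v^k\in\TT kM\subset\Tt kM$ (this is exactly the remark preceding the theorem), and a fibre of a vector bundle is closed under arbitrary linear combinations -- the base point of a combination is not a combination of base points, so no identity for $\binom{k+1}{\alpha+1}$ enters at this stage.

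Conversely, the one genuinely nontrivial combinatorial step -- the only thing the paper actually proves -- is glossed over by your phrase ``unpacking \eqref{eqn:def_P_k} yields \eqref{eqn:momenta_local}''. Writing the inclusions $\TT j\TT{k-j}E\subset\Tt kE$ in coordinates and applying $\PP_k$ to \eqref{eqn:moment_map} produces the coefficients $\sum_{j=a}^{k-b}(-1)^j\binom{j}{a}\binom{k-j}{b}\binom{k+1}{j+1}$, and one must prove these equal $(-1)^a\binom{k+1}{b}$; this is identity \eqref{eqn:binom_identity_B}, to which the paper devotes a separate generating-function/induction lemma, and it is absent from your plan. Finally, your treatment of \eqref{eqn:green} (induction via Pascal, or brute-force coordinates) is plausible and a direct coordinate verification certainly closes it, but as sketched it is not a proof: the inductive step hinges on the careful reinterpretation of $\Phi^{(k,k-1)}$ inside $\T$ of the semi-holonomic bundle and on matching three families of binomial coefficients, none of which is carried out; alternatively you may simply cite \cite{MJ_MR_higher_var_calc_2013}, as the paper does.
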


\begin{proof} We will proof only formula \eqref{eqn:momenta_local}, which is not covered by  Theorem 3.3 in \cite{MJ_MR_higher_var_calc_2013}.

The canonical inclusion $\TT j\TT {k-j} E \ra \Tt k E$ is given in fiber coordinates by
$$
(y^{i,(\alpha, \alpha')}) \mapsto (y^{i,(\epsilon)} = y^{i, (\epsilon_1+ \ldots + \epsilon_j, \epsilon_{j+1}+ \ldots + \epsilon_k)}),
$$
where $\epsilon = (\epsilon_1, \ldots, \epsilon_k)$ is a multi-index with $\epsilon_i = 0,1$, for $1\leq i\leq k$. Therefore,
\begin{align*}
\<\Phi^{(j,k-j)},\jet k\xi>_{\Tt k\sigma} &= \sum_i\sum_{\epsilon\in \Z_2^n} y^{i,(\epsilon_1+\ldots+\epsilon_j, \epsilon_{j+1}+\ldots+\epsilon_k)} \xi^{k-|\epsilon|}_i  = \\
& =  \sum_i \sum_{a\leq j, b\leq k-j} \binom ja \binom{k-j}b y^{i, (a,b)}\xi^{k-a-b}_i,
\end{align*}
where $|\epsilon| = \epsilon_1+\ldots\epsilon_k$.
Hence, from the definition~\eqref{eqn:moment_map}, we find that
\begin{equation}\label{eqn:momenta_coefficients}
\momenta_{k,\sigma}: (y^{i,(\alpha, \alpha')}) \mapsto (y^{i,(\beta)} =
\sum_{a+b=\beta} \sum_{j=a}^{k-b} (-1)^j \binom ja\binom{k-j}{b}\binom{k+1}{j+1} y^{i, (a, b)}).
\end{equation}
Now, the local expression \eqref{eqn:momenta_local} for $\momenta_{k,\sigma}$ follow from  Lemma~\ref{lem:binom_coeff} below on binomial coefficients.
\end{proof}
\begin{lem}\label{lem:binom_coeff} \begin{enumerate}[(a)]
\item For $0\leq a\leq k-1$ the following identity holds
\begin{equation}\label{eqn:binom_identity_A}
\sum_{j=a}^k (-1)^j \binom ja\binom k j =0.
\end{equation}
\item Let integer $a, b\geq 0$ be such that $a+b\leq k$. Then
\begin{equation}\label{eqn:binom_identity_B}
\sum_{j=a}^{k-b} (-1)^j\binom ja\binom {k-j}b\binom{k+1}{j+1} = (-1)^a \binom{k+1}{b}.
\end{equation}
\end{enumerate}
\end{lem}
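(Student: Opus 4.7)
For part (a), I would exploit the standard absorption identity $\binom{k}{j}\binom{j}{a} = \binom{k}{a}\binom{k-a}{j-a}$. This pulls the factor $\binom{k}{a}$ out of the sum; after the substitution $i = j - a$ the remaining sum becomes $(-1)^a\sum_{i=0}^{k-a}(-1)^i\binom{k-a}{i} = (-1)^a(1-1)^{k-a}$, which vanishes precisely when $k-a \geq 1$, i.e., under the hypothesis $a \leq k-1$.

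For part (b), I would use generating functions. First extend the summation range to all $j\in\{0,\ldots,k\}$ (the extra terms are automatically zero) and write each binomial coefficient as a coefficient extraction:
$$T(a,b) := \sum_{j=0}^k (-1)^j\binom{j}{a}\binom{k-j}{b}\binom{k+1}{j+1} = [x^a y^b]\sum_{j=0}^k (-1)^j(1+x)^j(1+y)^{k-j}\binom{k+1}{j+1}.$$
The inner $j$-sum evaluates in closed form: setting $u = 1+x$, $v = 1+y$, the identity
$$\sum_{j=0}^k(-u)^j v^{k-j}\binom{k+1}{j+1} = \frac{v^{k+1}-(v-u)^{k+1}}{u}$$
follows from reindexing $m = j+1$ and using the binomial theorem for $(v-u)^{k+1}$. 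Since $u = (1+y)-(y-x)$, the quotient telescopes geometrically to $\sum_{i=0}^k(1+y)^i(y-x)^{k-i}$.

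The final step is to extract $[x^a y^b]$ from this double sum. Expanding $(y-x)^{k-i} = \sum_p\binom{k-i}{p}(-x)^p y^{k-i-p}$ fixes $p = a$ (producing the factor $(-1)^a\binom{k-i}{a}$), and then $[y^b](1+y)^i y^{k-i-a} = \binom{i}{b-k+i+a}$. After reindexing $m = i - (k-a-b)$, one obtains
$$T(a,b) = (-1)^a\sum_{m=0}^{b}\binom{k-a-b+m}{m}\binom{a+b-m}{b-m},$$
which equals $(-1)^a\binom{k+1}{b}$ by the Chu-Vandermonde identity $\sum_{m=0}^n\binom{r+m}{m}\binom{s-m}{n-m}=\binom{r+s+1}{n}$ applied with $r = k-a-b$, $s = a+b$, $n = b$. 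The main obstacle is purely bookkeeping: tracking the summation ranges through the reindexings and checking that the hypothesis $a+b\leq k$ ensures $r\geq 0$ so that all binomials are standard. Everything else reduces to the generating-function trick of collapsing $\sum\binom{k+1}{j+1}(-u)^j v^{k-j}$ into a telescoping geometric sum.
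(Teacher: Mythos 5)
Your proof is correct, but it follows a genuinely different route from the paper's. For (a), the paper expands $((x+y)+z)^k$, substitutes $z=1/y$ and then $y=\sqrt{-1}$, and compares coefficients of $x^a$; your argument via the absorption identity $\binom{k}{j}\binom{j}{a}=\binom{k}{a}\binom{k-a}{j-a}$ and $(1-1)^{k-a}=0$ is the more standard and arguably cleaner derivation of the same fact. For (b), the paper first normalizes by $\binom{k+1}{b}$, using $\binom{j}{a}\binom{k-j}{b}\binom{k+1}{j+1}\binom{k+1}{b}^{-1}=\binom{j}{a}\binom{k+1-b}{j+1}$ to reduce the claim to $\sum_{j=a}^{k}(-1)^j\binom{j}{a}\binom{k+1}{j+1}=(-1)^a$, which it then proves by induction on $k$ with part (a) supplying the key cancellation; your generating-function computation instead collapses the $j$-sum into the telescoping quotient $\bigl(v^{k+1}-(v-u)^{k+1}\bigr)/u=\sum_{i=0}^k(1+y)^i(y-x)^{k-i}$ and finishes with the convolution $\sum_{m=0}^{n}\binom{r+m}{m}\binom{s-m}{n-m}=\binom{r+s+1}{n}$ (a Vandermonde-type identity, correctly applied with $r=k-a-b\geq 0$, which is exactly where the hypothesis $a+b\leq k$ enters). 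I checked the intermediate steps (the reindexing $m=j+1$, the coefficient extraction fixing $p=a$, the reindexing $m=i-(k-a-b)$ with the vanishing of out-of-range terms) and they are all sound. The trade-off: the paper's route is short once the normalization trick is spotted and reuses (a) as a lemma, while yours is a self-contained direct evaluation that does not rely on (a) at all and avoids induction, at the cost of more bookkeeping in the coefficient extraction.
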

\begin{proof}
(a) We expand $((x+y)+z)^k$, and then substitute $z:=1/y$ to get
\begin{equation}\label{eqn:xyz_to_k}
(x+y+z)^k = \sum_{a=0}^k \sum_{j=a}^k \binom j a \binom k j x^a y^{j-a} z^{k-j} =
\sum_{a=0}^k \sum_{j=a}^k \binom j a \binom k j x^a y^{2j} y^{-a-k}.
\end{equation}
Now we substitute $y:=\sqrt{-1}$, so $y^{2j} = (-1)^j$ and $(x+y+z)^k = (x+\sqrt{-1}+1/\sqrt{-1})^k = x^k$.
By comparing the coefficients in front of  $x^a$ in both sides \eqref{eqn:xyz_to_k} we get \eqref{eqn:binom_identity_A}.

(b) Let us notice that by dividing the expression in the sum of \eqref{eqn:binom_identity_B} by $\binom{k+1}b$ we get
$$
\binom j a\binom {k-j}b \binom {k+1}{j+1}\binom{k+1}b^{-1} = \binom j a \binom {k+1-b}{j+1}.
$$
Hence, by replacing $k-b$ with $k$, we are left to prove
\begin{equation}\label{eqn:binom_rhs_divided}
\sum_{j=a}^k (-1)^j\binom j a\binom {k+1}{j+1} = (-1)^a,
\end{equation}
for $0\leq a\leq k$. To prove this, we shall proceed by induction on $k$.

For $k=0$, the integer $a$ has to be zero and \eqref{eqn:binom_rhs_divided} holds trivially. Now, we expand the left hand side of $\eqref{eqn:binom_rhs_divided}_{k+1}$ using  standard binomial equalities, our inductive assumption and \eqref{eqn:binom_identity_A}.
\begin{align*}
\eqref{eqn:binom_rhs_divided}_{k+1}
&= (-1)^{k+1} \binom{k+1}a + \sum_{j=a}^{k} (-1)^j\binom ja \left(\binom{k+1}{j}+\binom{k+1}{j+1}\right) = \\
&=(-1)^{k+1} \binom {k+1}a + (-1)^a +  \sum_{j=a}^{k} (-1)^j\binom ja \binom{k+1}{j} = \\
&=(-1)^a +  \sum_{j=a}^{k+1} (-1)^j\binom ja \binom{k+1}{j} = (-1)^a
\end{align*}
if $a\leq k$. For $a=k+1$ the equality $\eqref{eqn:binom_rhs_divided}_{k+1}$ holds trivially.
\end{proof}



\newpage
\section{On Zakrzewski morphisms between vector bundles}\label{app:ZM}


In \cite{Zakrz_quant_class_pseud_I_1990} S. Zakrzewski observed that if in the usual axioms for a group  one replaces maps by relations,
in particular if the group multiplication is replaced by a
partial multiplication, what one gets is the standard notion of a groupoid.
However, in this case the notion of a morphism is not the usual one.
Denote by $r: X\relto Y$ a \emph{relation}, i.e. a subset of $X\times Y$
called \emph{graph} of $r$ with a specified \emph{domain} $X$ and a \emph{codomain} $Y$.
Relations with compatible domains and codomains can be composed. Relations form a category with respect to this composition. Therefore, it makes sense to speak about \emph{commutative diagrams} in which arrows represent relations, not necessary  mappings. The composition $r_2\circ r_1$ of
relations $r_1:X\relto Y$, $r_2: Y\relto Z$ is called \emph{simple}, what we indicate by writing $r_2\bullet r_1$,
if for any $(x,z)\in r_2\circ r_1$ there is only one $y\in Y$ such that $(x,y)\in r_1$, $(y,z)\in r_2$.
 The transposed relation $r^T: Y\relto X$
and product of relations are defined in an obvious way. For $x\in X$ and $r$ as above we write $r(x)$
for $\{y\in Y: (x,y)\in r\}$.
A relation $r: X\relto Y$ is smooth (in this case $X$ and $Y$ are smooth manifolds)
if its graph is a submanifold of $X\times Y$.
An example $x\mapsto \sqrt[3]{x}$, $x\in \R$, shows that it can happen that
a non-smooth map is smooth when considered as a relation.

\begin{df}
A \emph{groupoid}\label{def:groupoid_Zakrz} (in the sense of Zakrzewski, called in \cite{Zakrz_quant_class_pseud_I_1990} a \emph{ $U^*$-algebra}) is a quadruple
$(\Gamma, m, e, s)$, where set $\Gamma$, \emph{partial multiplication} $m:\Gamma\times \Gamma \relto \Gamma$,  \emph{unit}
$e: \{\operatorname{pt}\} \relto \Gamma$ and \emph{inverse} $s:\Gamma \relto \Gamma$ are such that
\begin{align}
\begin{split}
m \circ (m\times \id) & = m\circ (\id\times m), \\
m\circ(e\times \id) & = m \circ (\id\times e) = \id,\\
s\circ s &= \id, \\
s\circ m & = m\circ (s\times s)\circ \operatorname{ex},\, \text{where}\, \operatorname{ex}(x, y) =(y, x)\, \text{for}\, x, y\in \Gamma, \\
\emptyset & \neq m(s(x), x) \subset e(\operatorname{pt}), \text{for any}\, x\in\Gamma.
\end{split}
\end{align}
A {\it Zakrzewski morphism} (ZM, in short) form a groupoid $(\Gamma_1,m_1, e_1, s_1)$ to a groupoid $(\Gamma_2,m_2, e_2, s_2)$
is a relation $f: \Gamma_1 \relto \Gamma_2$ that commutes with the structure relations, i.e.,
$$
f\circ m_1 = m_2\circ (f\times f), \quad f\circ s_1 = s_2\circ f, \quad f\circ e_1 = e_2.
$$
\end{df}
\bigskip

Note that relation $s$ has to be a mapping. Indeed, the image $s(x)$ cannot be empty, and hence $\# s(x) \leq 1$, since
otherwise $\#s\circ s(x) > 1$. The image $M:=e(\pt)\subset \Gamma$ is called the base of $\Gamma$. The source and
target maps $\alpha, \beta: \Gamma\to M$ are uniquely defined by properties (\cite{Zakrz_quant_class_pseud_I_1990}, Lemma 2.2)
$$
m(x, \alpha(x)) \neq \emptyset, \quad m(\beta(x), x) \neq \emptyset.
$$
Let us denote $\Gamma_x = \alpha^{-1}(x)$ (resp. $_x\Gamma=\beta^{-1}(x)$).

Let $\underline{f}: M_1 \relto M_2$ be the \emph{base relation} associated with a  groupoid morphism
$f:\Gamma_1\relto \Gamma_2$, defined by $\underline{f} = f \cap (M_1\times M_2)$. It turns out that the
transpose of the base relation
$\underline{f}$, denoted by $f_0:=\underline{f}^T$, is a mapping ( \cite{Zakrz_quant_class_pseud_I_1990}, Lemma 2.5). Moreover, for any $y\in M_2$
the restrictions $\operatorname{graph}(f)\cap (\Gamma_{1,x}\times\Gamma_{2,y})$,
where $x=f_0(y)$,   are mappings as well (\cite{Zakrz_quant_class_pseud_I_1990}, Lemma 3.3). We denote them by
$f_y:\Gamma_{1,f_0(y)}\to \Gamma_{2,y}$. Similarly, we obtain  mappings between $\beta$-fibers. We shall
refer to the mapping $f_0$ by saying that a ZM $f$ \emph{covers} the map $f_0$.

Differential (i.e., Lie) groupoids in the standard sense can be defined using Zakrzewski's approach. (They are called regular $D^*$-algebras in \cite{Zakrz_quant_class_pseud_II_1990}.) Lie groupoids are groupoids in the sense of Definition \ref{def:groupoid_Zakrz} whose structure relations $m$, $e$ and $s$ are smooth and additionally satisfy some transversality conditions (\cite{Zakrz_quant_class_pseud_II_1990}, p. 375).

The notion of a smooth Zakrzewski morphism (\cite{Zakrz_quant_class_pseud_II_1990}, pp. 375-376) requires the notion of \emph{transversality of relations}. Recall that the \emph{tangent lift}  of a smooth relation $r: X\relto Y$ is a smooth relation $\T r: \T X\relto \T Y$
whose graph is $\T\Graph(r)$. The \emph{phase lift} of $r$ is, in turn, a smooth relation $\TT\ast r:  \TT\ast X\relto \TT\ast Y$ whose graph consists of pairs $(\xi,\eta) \in \TT\ast_x X\times \TT\ast_y Y$
such that $\<\xi, v> = \<\eta, u>$ for any $(v, u)\in \T_{(x,y)} \operatorname{graph}(r)$. We say that relations $r_1:X\relto Y$ and $r_2:Y\relto Z$ have \emph{transverse composition}, which is denoted by $r_2 \pitchfork r_1$,
if the graph of $r_2\circ r_1$ is a smooth submanifold of $X\times Y$ and
both tangent and phase lifts of $r_1$ and $r_2$ have simple composition: $\T r_2\bullet \T r_1$, $\TT\ast r_2\bullet \TT\ast r_1$.

\begin{df}\label{def:morph_Zakrz} A Zakrzewski morphism $f: \Gamma_1\relto \Gamma_2$ of
Lie groupoids $(\Gamma_i, m_i. e_i, s_i)$, $i=1,2$, is \emph{smooth} if the graph of $f$ is a smooth
submanifold of $\Gamma_2\times \Gamma_1$ and, moreover,
\begin{enumerate}
\item[(i)] $f \pitchfork e_1$,
\item[(ii)] $m_2\pitchfork (f\times f)$.
\end{enumerate}
\end{df}

It turns out that by applying the tangent lift or the phase lift to
the structure relation of a Lie groupoid $\Gamma$ one obtains again a Lie groupoid (\cite{Zakrz_quant_class_pseud_II_1990}, Proposition 3.4). Moreover  Lie groupoids and smooth ZMs form a category closed with respect to taking tangent and
phase lifts (\cite{Zakrz_quant_class_pseud_II_1990}, Proposition 4.2).

A map $g: \R\to \R$ of the groupoid $\Gamma = \R=M$ given by  $f(x)=x^3$ for $x\in \R$, gives a simple example of a ZM (and a smooth relation) whose tangent lift $\T g$ is not a ZM. Indeed, by Lemma 5.2 in \cite{Zakrz_quant_class_pseud_I_1990} for every (not necessary smooth) ZM
$f:\Gamma_1\relto \Gamma_2$  the compositions $f \circ e_1$ and $m_2\circ (f\times f)$ are always simple. One can check that $\T g$ fails to satisfy this property at $x=0$. From what was said above $f$ is not a smooth ZM (although it is a ZM and a smooth relation).
\bigskip

A simple but important class of groupoids is provided by vector bundles (all vector bundles we consider will be of \textbf{finite rank}). Any vector bundle $\sigma: E \to M$ gives rise to a groupoid $(E, +, 0_M, -)$, where $0_M: M \to E$ is the zero section. In case of groupoids associated with vector bundles, the notion of smooth ZM simplifies considerably.

\begin{thm}\label{thm:Z}
Let $f:E_1\relto E_2$ be a Zakrzewski morphism of groupoids associated with vector bundles
$\sigma_i:E_i\to M_i$, $i=1,2$ (no
smoothness condition is assumed). Then the following conditions are equivalent:
\begin{enumerate}
\item[(a)] $f$ is a smooth ZM,
\item[(b)] The map $g: f_0^*(E_1)\to E_2$ defined by $g(y,v) = f_y(v)$, for $v\in E_1$, $y\in M_2$ such that $f_0(y) = \sigma_1(v)$, is
a vector bundle morphism covering $\id_{M_2}$,
\item[(c)] The dual $f^*: E_2^*\to E_1^*$  is a vector bundle morphism.
\end{enumerate}
\end{thm}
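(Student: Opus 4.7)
The plan is to exploit the fact that a vector bundle $\sigma: E\to M$, regarded as a groupoid in the sense of Definition~\ref{def:groupoid_Zakrz}, has source equal to target equal to $\sigma$ (so $E_x = \sigma^{-1}(x)$), multiplication equal to fibrewise addition, unit equal to the zero section, and inverse equal to fibrewise negation. Consequently, for a ZM $f: E_1\relto E_2$, the base map $f_0: M_2\to M_1$ exists and the fibre maps $f_y: E_{1, f_0(y)}\to E_{2, y}$ discussed after Definition~\ref{def:morph_Zakrz} are group homomorphisms; the data $(f_0, \{f_y\}_{y\in M_2})$ is thus equivalent, at the set-theoretic level, to the map $g$ appearing in (b).

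For the implication (a)$\Rightarrow$(b) I would observe first that $\Graph(f)$ is the image of the set-theoretic injection
$$\iota: f_0^\ast(E_1)\lra E_1\times E_2, \qquad (y, v)\longmapsto (v, f_y(v)),$$
injectivity being clear since $y=\sigma_2(f_y(v))$. Under (a), $\Graph(f)$ is a smooth submanifold, so $g$ is smooth. Each $f_y$ is a smooth additive map between finite-dimensional real vector spaces; additivity gives $\mathbb{Q}$-linearity, which extends by continuity to $\mathbb{R}$-linearity. Hence $g$ is a vector bundle morphism over $\id_{M_2}$.

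For (b)$\Rightarrow$(a) the same $\iota$ becomes a smooth map (since $g$ is smooth), which is then easily seen to be an injective immersion with embedded image, yielding smoothness of $\Graph(f)$ as a relation. Condition (i) of Definition~\ref{def:morph_Zakrz} reduces to checking simple composition of tangent and phase lifts of $f$ with $e_1$, which follows because $g$ restricted to the zero section of $f_0^\ast(E_1)$ is (by fibrewise linearity) the zero section of $E_2$. Condition (ii) reduces to the identity $m_2\circ (f\times f): (y, v, w)\mapsto g(y, v) + g(y, w) = g(y, v+w)$, whose simple-composition property at the level of $\T$ and $\TT\ast$ follows from the fibrewise linearity of $g$: both $\T g$ and $\TT\ast g$ are linear bundle maps over linear base maps, and they compose simply with the linear operations induced by $\T m_2$ and $\TT\ast m_2$.

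Finally (b)$\Leftrightarrow$(c) is fibrewise linear algebra. Dualising a vector bundle morphism $g: f_0^\ast(E_1)\to E_2$ over $\id_{M_2}$ yields a vector bundle morphism $g^\ast: E_2^\ast\to (f_0^\ast(E_1))^\ast = f_0^\ast(E_1^\ast)$ over $\id_{M_2}$, which is the same datum as a vector bundle morphism $f^\ast: E_2^\ast\to E_1^\ast$ covering $f_0$, related to $f$ by
$$\<f^\ast(\xi), v>_{\sigma_1} = \<\xi, f_y(v)>_{\sigma_2} \qquad \text{for } \xi\in E_{2,y}^\ast,\ v\in E_{1, f_0(y)}.$$
Smoothness and linearity are preserved in both directions; this is exactly the duality used throughout the body of the paper (cf.\ \eqref{eqn:kappa_eps}, \eqref{eqn:kappa_eps_E}, \eqref{eqn:kappa_eps_E_k}) to identify $\eps_k$ as the dual of $\kappa_k$. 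The main obstacle I expect is the transversality verification in (b)$\Rightarrow$(a): the tangent and phase lifts of $f$ interact with the pullback $f_0^\ast(E_1)$ in a way that requires careful bookkeeping, especially since $f_0$ need not be injective, so several distinct $y\in M_2$ may map to the same $x\in M_1$ and contribute separately to the fibre of $\Graph(f)$ over a single $v\in E_{1,x}$.
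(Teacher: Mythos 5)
Your architecture (the triangle (a)$\Rightarrow$(b), (b)$\Leftrightarrow$(c), (b)$\Rightarrow$(a), with the fibrewise-duality argument for (b)$\Leftrightarrow$(c)) matches the paper's, but the two non-trivial implications contain genuine gaps. In (a)$\Rightarrow$(b) the step ``$\Graph(f)$ is a smooth submanifold, so $g$ is smooth'' is a non sequitur, and it is exactly the subtlety the appendix warns about with $x\mapsto\sqrt[3]{x}$: a relation can have a smooth graph without being a smooth map, the failure occurring precisely when the tangent space of the graph degenerates relative to the projection onto the would-be domain. Your map $g$ is the inverse of the smooth bijection $\Graph(f)\to f_0^*(E_1)$, $(v,w)\mapsto(\sigma_2(w),v)$, and smoothness of that inverse (as well as smoothness of $f_0$ itself, which you never address but which is needed even to speak of the pullback bundle $f_0^*(E_1)$) is what has to be proved. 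The paper does not deduce it from graph smoothness: it invokes Zakrzewski's Lemma 4.1, which uses the transversality conditions (i)--(ii) of Definition \ref{def:morph_Zakrz} to present $\Graph(f)$ as the graph of a smooth section of $\tau_2\times\id_{E_1}$ over $f_0^*(E_1)$. Fibrewise additivity alone does not obviously exclude cube-root-type pathologies in the direction of the base $M_2$, so either use (i)--(ii) as the paper does, or supply a genuine argument (e.g.\ via homogeneity structures); as written the crux is simply asserted.

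In (b)$\Rightarrow$(a) the real content of conditions (i) and (ii) is the simplicity of the compositions $\T f\circ\T e_1$, $\T m_2\circ(\T f\times\T f)$, $\TT\ast f\circ\TT\ast e_1$ and $\TT\ast m_2\circ(\TT\ast f\times\TT\ast f)$, and your justification (``linear bundle maps over linear base maps \dots compose simply'') asserts rather than proves it; the base maps involved are $\T f_0$ and, for the phase lift, $f^*:E_2^*\to E_1^*$, and neither is ``linear'' in any sense that yields the claim. The paper's mechanism is different and does the work: one checks that $\T f$ is again a ZM, being dual to the vector bundle morphism $(\T f^*,\T f_0):\T E_2^*\to\T E_1^*$ over $\T f_0$ (here the identification $\T(f_0^*E_1)=(\T f_0)^*(\T E_1)$ is used), and that $\TT\ast f$ is a ZM dual to $(\T f^*,f^*)$ covering $f^*$; then simplicity of the compositions with $e$ and $m$ holds automatically for any ZM (Zakrzewski's Lemma 5.2, quoted in the appendix), because the fibrewise restrictions of a ZM are maps. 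Your treatment of (i) only remarks that $g$ sends the zero section to the zero section, which addresses smoothness of $\Graph(f\circ e_1)$ but not simplicity of the lifted compositions, and the phase-lift case is not addressed at all. Fill these in (most economically by proving that the tangent and phase lifts of $f$ are ZMs, i.e.\ duals of honest vector bundle morphisms) and the proof closes; the (b)$\Leftrightarrow$(c) part and your pullback/embedding observations are fine and coincide with the paper's.
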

\begin{proof} (a)$\Rightarrow$(b): By (\cite{Zakrz_quant_class_pseud_II_1990}, Lemma 4.1) the graph of $f$ is a graph of a smooth section $\wt{g}$ of the projection
$\tau_2\times \id_{E_1}: E_2\times E_1\to M_2\times E_1$ over $f_0^*(E_1)\subset M_2\times E_1$. We have $g= \text{pr}_{E_2}\circ \wt{g}$, hence $g$
is a smooth mapping. A smooth map between vector spaces respecting addition group structures is automatically linear.
Therefore $g$ is a vector bundle morphism.

\noindent (b)$\Leftrightarrow$(c): Given $g$ we find that $f^*$ is the composition of $g^*: E_2^*\to (f_0^*(E_1))^* = f_0^*(E_1^*)$
with the canonical vector bundle morphism $f_0^*(E_1^*)\to E_1^*$. Conversely, dual to the canonical morphism $E_2^*\to f_0^*(E_1^*)$
associated with $f^*$ and  covering $\id_{M_2}$ is $g$, hence $g$ is a vector bundle morphism.

\noindent (c) $\Rightarrow$(a): Clearly a vector bundle morphism $h: F_2\to F_1$ gives  a ZM $h^*: F_1^*\relto F_2^*$.
We have to check that for $h=f^*$, $h^*=f$ is a smooth ZM. Obviously, the graph of $f\circ e_1$ is $M_2$,
so a smooth submanifold of $E_2$. Similarly, the graph of $m_2\circ (f\times f)$ is canonically identified with
the graph of $m_2\circ (g\times g)$, which is a smooth submanifold of
$E_2\times_{M_2} f_0^*(E_1)\times_{M_2} f_0^*(E_1) \subset E_2\times E_1\times E_1$.

What is left to check is the fact that the compositions $\T f\circ \T e_1$, $\T m_2\circ\T(f\times f)$, $\T^*f\circ \T^*e_1$ and
$\TT\ast m_2\circ \TT\ast(f\times f)$ are simple. It is so indeed, because $(\T f^\ast, \T f_0): \T \E\ast_2\to \T\E\ast_1$
is a vector bundle morphism covering $\T f_0: \T M_2\to\T M_1$,
hence its dual, which is $\T f:\T E_1\relto \T E_2$, is a ZM. Therefore  the compositions
$\T f\circ \T e_1$, $\T m_2\circ (\T f\times \T f)$ are simple, like for any ZM of groupoids. Similarly for the phase lift:
$\TT\ast f: \TT\ast E_1\relto \TT\ast E_2$ is a ZM dual to a vector bundle morphism $(\T f^*, f^*): \T E_2^*\to \T E_1^*$ but now covering $f^*: E_2^*\to E_1^*$.
\end{proof}
In what follows we always assume that a ZM is \textbf{smooth}.
A Zakrzewski morphism between vector bundles can be presented in a form
\begin{equation}\label{ZM:general}
\xymatrix{E_1 \ar[d]^{\sigma_1} \ar@{-|>}[r]^{f} & E_2 \ar[d]^{\sigma_2}  \\ %
M_1  & M_2.\ar[l]_{f_0}  }
\end{equation}

\begin{cor} The contravariant functor $E \to \E\ast$ is an equivalence of categories between the category of vector bundles with vector bundle morphisms and opposite to the category of  vector bundles with Zakrzewski morphisms.
\end{cor}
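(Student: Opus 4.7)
The plan is to exhibit $E\mapsto E^\ast$ as a contravariant equivalence by verifying (i) functoriality, (ii) fully-faithfulness and (iii) essential surjectivity, using Theorem \ref{thm:Z} as the main input. Concretely, I would define the functor $\Phi:\mathcal{V}\to\mathcal{Z}^{op}$ on objects by $\Phi(E):=E^\ast$ and on a vector bundle morphism $g:E_1\to E_2$ by $\Phi(g):=g^\ast:E_2^\ast\relto E_1^\ast$, where $g^\ast$ is the smooth Zakrzewski morphism produced by the implication $(c)\Rightarrow(a)$ of Theorem \ref{thm:Z} applied to the dual VB morphism $g:E_1\to E_2$. Viewed as an arrow $E_1^\ast\to E_2^\ast$ in $\mathcal{Z}^{op}$, this assignment is covariant, matching the statement.

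Next I would verify functoriality. That $\id_E:E\to E$ dualizes to the identity relation on $E^\ast$ is immediate from the fibrewise description in Theorem \ref{thm:Z}(b). For composition, take $g_1:E_1\to E_2$ and $g_2:E_2\to E_3$; I would check that $g_1^\ast\circ g_2^\ast=(g_2\circ g_1)^\ast$ as relations $E_3^\ast\relto E_1^\ast$. The cleanest route is to compare their graphs pointwise: using Theorem \ref{thm:Z}(b), both sides restrict over each point $z\in M_3$ of the base to the transpose of the linear map $(g_2\circ g_1)_z:(E_1)_{f_0(z)}\to(E_3)_z$, which forces the graphs to coincide. Along the way one must check that the relational composite $g_1^\ast\circ g_2^\ast$ lies in the class of smooth ZMs, i.e. satisfies the transversality conditions of Definition \ref{def:morph_Zakrz}; since the composite is by the above identification itself dual to a VB morphism, the final paragraph of the proof of Theorem \ref{thm:Z} supplies the required transversality for free.

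Fully faithfulness is then exactly Theorem \ref{thm:Z}: the correspondence $g\leftrightarrow g^\ast$ between $\operatorname{Hom}_{\mathcal{V}}(E_1,E_2)$ and $\operatorname{Hom}_{\mathcal{Z}}(E_2^\ast,E_1^\ast)=\operatorname{Hom}_{\mathcal{Z}^{op}}(E_1^\ast,E_2^\ast)$ is a bijection, with inverse produced by dualizing once more and invoking the canonical identification $E\simeq E^{\ast\ast}$. For essential surjectivity, any vector bundle $F\in\mathcal{Z}^{op}$ admits the canonical isomorphism $F\simeq F^{\ast\ast}=\Phi(F^\ast)$, which is a ZM because it is a VB isomorphism, so every object of $\mathcal{Z}^{op}$ is isomorphic to one in the image of $\Phi$.

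The main obstacle I anticipate is the bookkeeping in the composition step: showing that the relational composite of two ZMs of the form $g^\ast$ is again a smooth ZM (so that $\Phi$ even lands in $\mathcal{Z}^{op}$) and that it matches the dual of the composite VB morphism. The fibrewise formulation in Theorem \ref{thm:Z}(b), together with the fact that the functor $E\mapsto E^\ast$ is pulled back under $f_0:M_2\to M_1$ in a compatible way, reduces both issues to a routine check on linear algebra in the fibres, which is the reason the categorical equivalence is forced once Theorem \ref{thm:Z} is in hand.
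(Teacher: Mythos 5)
Your proposal is correct and takes essentially the same route as the paper, which states the corollary as an immediate consequence of Theorem \ref{thm:Z}: the bijection $g\leftrightarrow g^{\ast}$ together with the canonical identification $E\simeq E^{\ast\ast}$, so your verification of functoriality, full faithfulness and essential surjectivity is just the standard unpacking of that theorem. (One small bookkeeping slip in the composition step: the fibrewise maps of the ZM $(g_2\circ g_1)^{\ast}:E_3^{\ast}\rightarrow\!\!\vartriangleright E_1^{\ast}$ are indexed by points $m_1$ of $M_1$, each being the transpose of $(g_2\circ g_1)_{m_1}:(E_1)_{m_1}\to (E_3)_{(g_2\circ g_1)_0(m_1)}$, not by points of $M_3$ as written; this does not affect the argument.)
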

In literature (e.g. \cite{KG_JG_var_calc_alg_2008}) ZMs between vector bundles are  sometimes called \emph{vector bundle morphisms of the second kind}.
 In \cite{Kolar_Michor_Slovak_nat_oper_diff_geom_1993} \emph{category of star bundles} $\mathcal{FM}^*$  is considered. Its objects
are fibered manifolds and morphism are of the form similar as in Theorem \ref{thm:Z}, i.e., a morphism
$\phi: (Y_1 \xrightarrow{\sigma_1} M_1)\to (Y_2 \xrightarrow{\sigma_2}M_2)$ between fibered manifolds is a couple $(\phi_0, \phi_1)$ where $\phi_0:M_1\to M_2$ is a smooth map and $\phi_1:(\phi_0)^*Y_2 \to Y_1$
is a fibered morphism over $\id_M$.

By iterated application of the tangent functor to a relation $r: X\relto Y$ we get a relation $\Tt{k} r:  \Tt{k} X\relto \Tt{k} Y$. In a similar way we get the higher-order tangent prolongations of $r$, i.e., relations
$\TT{k} r:  \TT{k} X\relto \TT{k} Y$. By applying
$\Tt{r}$, $\TT{r}$ or $\TT\ast$ to the structure relation of a (Lie) groupoid $\Gamma$ we obtain again a (Lie) groupoid.

The following theorem follows from a remark under Lemma 4.1 in \cite{Zakrz_quant_class_pseud_II_1990}, but can be also easily derived directly from Theorem \ref{thm:Z}.
\begin{thm} \label{thm:ZM_lift}
If $f: E_1\relto E_2$ is a Zakrzewski morphism of vector bundles then so are
$$
\xymatrix{
\Tt{r} E_1 \ar[d]^{\Tt{r}\sigma_1} \ar@{-|>}[r]^{\Tt{r}f} & \Tt{r}E_2 \ar[d]^{\Tt{r}\sigma_2} \\
\Tt{r} M_1  & \Tt{r} M_2, \ar[l]_{\Tt{r} f_0}
}
\quad
\xymatrix{
\TT{r} E_1 \ar[d]^{\TT{r}\sigma_1} \ar@{-|>}[r]^{\TT{r}f} & \TT{r}E_2 \ar[d]^{\TT{r}\sigma_2} \\
\TT{r} M_1  &  \TT{r} M_2 \ar[l]_{\TT{r} f_0}
}\quad \text{and}
\quad
\xymatrix{
\TT\ast E_1 \ar[d] \ar@{-|>}[r]^{\TT\ast f} & \TT\ast E_2 \ar[d] \\
E_1^*   &  E_2^*. \ar[l]_{f^*}
}$$
The associated vector bundle morphisms of dual bundles are
$$
\xymatrix{
\Tt{r} E_2^* \ar[d] \ar[r]^{\Tt{r}f^*} & \Tt{r}E_1^* \ar[d] \\
\Tt{r} M_2 \ar[r]^{\Tt{r} f_0} & \Tt{r} M_1,
}
\quad
\xymatrix{
\TT{r} E_2^* \ar[d] \ar[r]^{\TT{r}f^*} & \TT{r}E_1^* \ar[d] \\
\TT{r} M_2 \ar[r]^{\TT{r} f_0} &  \TT{r} M_1
}\quad\text{and}
\quad
\xymatrix{
\T E_2^* \ar[d] \ar[r]^{\T f^*} & \T E_1^* \ar[d] \\
E_2^*   \ar[r]^{f^*} &  E_1^*.
}$$
\end{thm}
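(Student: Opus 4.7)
The plan is to leverage Theorem \ref{thm:Z}, which characterizes ZMs between vector bundles through the equivalent condition that the dual map is a genuine vector bundle morphism. Since $f$ is assumed to be a ZM, we already have that $f^\ast : E_2^\ast \to E_1^\ast$ is a vector bundle morphism covering $f_0 : M_2 \to M_1$. The strategy is then, for each of the three lifts in the statement, to identify its dual (in the sense of Theorem \ref{thm:Z}) with an explicit vector bundle morphism obtained by lifting $f^\ast$, after which condition (c) $\Rightarrow$ (a) of that theorem forces the lifted relation to be a ZM.

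For the Weil-functor lifts $\TT r f$ and $\Tt r f$, the lifted total spaces $\TT r E_i$ and $\Tt r E_i$ carry natural vector bundle structures over $\TT r M_i$ and $\Tt r M_i$, and the corresponding dual bundles are canonically $\TT r E_i^\ast$ and $\Tt r E_i^\ast$ via the non-degenerate pairings \eqref{eqn:pairingOne} and \eqref{eqn:pairingTwo}. Functoriality of $\TT r$ and $\Tt r$ applied to $f^\ast : E_2^\ast \to E_1^\ast$ yields vector bundle morphisms $\TT r f^\ast$ and $\Tt r f^\ast$ over $\TT r f_0$ and $\Tt r f_0$. The naturality of the pairings with respect to these functors shows directly that $\TT r f$ and $\Tt r f$ are the ZM duals of $\TT r f^\ast$ and $\Tt r f^\ast$, and Theorem \ref{thm:Z} then gives the desired statement together with the identification of the dual morphism.

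For the phase lift $\TT\ast f$ we view $\TT\ast E_i$ as a vector bundle over $E_i^\ast$ through the Tulczyjew projection (in coordinates $(x^a,y^i,p_b,\pi_j) \mapsto (x^a,\pi_j)$); its dual in the vector bundle sense over $E_i^\ast$ is canonically $\T E_i^\ast \to E_i^\ast$. The tangent prolongation $\T f^\ast : \T E_2^\ast \to \T E_1^\ast$ is then a vector bundle morphism covering $f^\ast$. The only verification required is that $\T f^\ast$ really is the dual of the phase lift $\TT\ast f$: this follows by unwinding the definition of $\TT\ast f$ (pairing an element of $\TT\ast E_1 \times \TT\ast E_2$ against tangent vectors to $\Graph f$) against the Tulczyjew pairing of $\TT\ast E$ with $\T E^\ast$, which is a routine check in local coordinates. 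With this in hand, Theorem \ref{thm:Z} finishes the argument.

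The only non-mechanical step in the entire proof is the identification, in each of the three cases, of the dual of the lifted relation with the lifted (or prolonged) version of $f^\ast$. For the Weil-functor lifts this is merely the statement that the canonical pairings are natural under the lifting procedure, transparent from formulas \eqref{eqn:pairingOne}--\eqml{eqn:pairingTwo}; for the phase lift it is the classical duality between the two legs of the Tulczyjew triple $(\TT\ast E \to E^\ast)^\ast \simeq (\T E^\ast \to E^\ast)$. Neither presents a genuine obstacle beyond careful bookkeeping, and the conceptual content of the theorem really reduces to the observation that once $f^\ast$ is a true vector bundle morphism, ordinary functoriality of the tangent, iterated tangent and Weil functors provides the required lifts for free.
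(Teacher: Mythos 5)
Your argument is correct and is essentially the route the paper itself indicates: the paper gives no separate proof, merely noting that the theorem "can be easily derived directly from Theorem \ref{thm:Z}," and the relevant identifications (that $\T f$ and $\TT\ast f$ are the ZM duals of the vector bundle morphisms $\T f^\ast$ over $\T f_0$ and $\T f^\ast$ over $f^\ast$, respectively) already appear verbatim in the proof of the implication (c)$\Rightarrow$(a) there. Your proposal simply spells out this duality-plus-functoriality argument, including the lifted pairings for the $\Tt r$ and $\TT r$ cases, so it matches the intended proof.
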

\bigskip
Let us end with two simple remarks about relations. The first one concerns the problem of restricting a relation to a subset.
\begin{rem}[Reductions of ZMs]\label{rem:red_ZM}
Let $f: E_1\relto E_2$ be a ZM as in \eqref{ZM:general}. Let $\sigma_1':E_1'\to M_1'$ be a subbundle of
$\sigma_1:E_1\to M_1$ and let $M_2'$ be a submanifold of $M_2$ such that $f_0(M_2')\subseteq M_1'$. Then, by restricting
$f$ fiber-wise we get a ZM denoted here by $f_{|E_1'}$
$$
\xymatrix{E_1' \ar[d]^{\sigma_1'} \ar@{-|>}[r]^{f_{|E_1'}} & E_2' \ar[d]^{\sigma_2'}  \\ %
M_1  & M_2, \ar[l]_{f_{0|M_2'}}  }
$$
where $\sigma_2'$ is the pullback of $\sigma_2$ with respect to the inclusion $M_2'\subset M_2$.
It may happen that $f(E_1')$ is contained in a subbundle $\sigma_2'':E_2''\to M_2'$.
This way we get a ZM denoted here by $f': E_1'\relto E_2''$ covering $f_{0|M_2'}$. We can
write shortly $f' = f \cap ( E_1'\times E_2'')$. Such situation can be presented on the following diagram:
$$\xymatrix{
E_1'\ar@{-|>}[rrrr]^{f'}\ar[ddd]^{\sigma_1'} \ar@{^{(}->}[dr]^{\iota_1}&&&& f(E_1')\ar@{^{(}-->}[r] & E_2''\ar[ddd]^{\sigma_2''}\ar@{_{(}->}[dl]^{\iota_2}\\
&E_1 \ar@{-|>}[rrr]^f \ar[d]^{\sigma_1}&&& E_2\ar[d]^{\sigma_2}&\\
&M_1 &&& M_2\ar[lll]_{f_0}&\\
M_1'\ar@{^{(}->}[ur] &&&&& M_2'.\ar[lllll]\ar@{_{(}->}[ul]
}$$

It is interesting to see how the dual morphism $(f')^\ast:(E_2'')^\ast\ra (E_1')^\ast$ behaves in this situation. We claim that $f^\ast$ factorizes to $(f')^\ast$ through $\iota_1^\ast$ and $\iota_2^\ast$:
\begin{equation}\label{eqn:dual_ZM}
\xymatrix{
(E_1')^\ast && (E_2'')^\ast\ar[ll]_{(f')^\ast}\\
E_1^\ast \ar@{-|>}[u]_{\iota_1^\ast} && E_2^\ast.\ar[ll]_{f^\ast} \ar@{-|>}[u]_{\iota_2^\ast}
}\end{equation}
Indeed, take any $\psi'\in (E_2'')^\ast_y$ and consider $\phi'=(f')^\ast\psi'\in (E_1')^\ast_{f_0(y)}$. By definition $\phi'$ is a unique element such that for every $V\in (E_1')_{f_0(y)}$ and $W\in (E_2'')_y$ which are $f'$-related we have
$$\<\phi',V>=\<\psi',W>. $$
Now take any $\psi \in E_2^\ast$ such that $\psi'=\iota_2^\ast\psi$ and let us calculate $\iota_1^\ast f^\ast\psi$. For $V,W$ as above we have
$$\<\iota_1^\ast f^\ast\psi,V>=\<f^\ast\psi, \iota_1(V)>.$$
Now $\iota_1(V)$ and $\iota_2(W)$ are $f$-related as $f'=f\cap(E_1'\times E_2'')$. Hence the later equals
$$\<\psi,\iota_2 (W)>=\<\iota_2^\ast \psi, W>=\<\psi',W>,$$
and hence $\iota_1^\ast f^\ast\psi=\phi'$. This finishes the reasoning.
\end{rem}
\bigskip

The second remark concerns a problem of defining new relations from old ones. Let us start with the following natural definition.
\begin{df}\label{def:def_rel}
By saying that a relation $r: A \relto B$ {\it is defined by a diagram}
\begin{equation}\label{e:r_defined_by_diagram}
\xymatrix{
M_1 \ar[d]^{p_1} \ar@{-|>}[r]^{f} & M_2 \ar[d]^{p_2}  \\ %
N_1  \ar@{--|>}[r]^{r} & N_2
}\end{equation}
we mean that elements $x_i\in N_i$ are $r$-related $(x_1, x_2)\in r$ if and only if there exist $y_j\in M_j$, $j=1,2$ such that $p_j(y_j)=x_j$ and $(y_1, y_2)\in f$.
\end{df}

Note the following simple observation:
\begin{prop}\label{prop:def_rel} Let us assume that a smooth relation $r: N_1\to N_2$ is defined by a diagram (\ref{e:r_defined_by_diagram}) in which $f$ is a smooth relation and $p_1$, $p_2$
are smooth maps. Then the tangent lift of $r$ is defined by the following diagram
$$\xymatrix{
\tgT M_1 \ar[d]^{T p_1} \ar@{-|>}[r]^{Tf} & \tgT M_2 \ar[d]^{T p_2}  \\ %
\tgT N_1  \ar@{--|>}[r]^{T r} & \tgT N_2.
}$$
\end{prop}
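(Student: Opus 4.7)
The strategy is to show equality of graphs. By Definition \ref{def:def_rel}, the hypothesis that $r$ is defined by diagram (\ref{e:r_defined_by_diagram}) means $\Graph r = (p_1 \times p_2)(\Graph f) \subseteq N_1 \times N_2$. Smoothness of $r$ and $f$ means that both $\Graph r$ and $\Graph f$ are smooth submanifolds, and by the definition of the tangent lift recalled in the paragraph preceding Definition \ref{def:morph_Zakrz}, we have $\Graph(\T r) = \T(\Graph r)$ and $\Graph(\T f) = \T(\Graph f)$. Hence the claim of the proposition unwinds to the set-theoretic equality
\[
\T(\Graph r) \;=\; (\T p_1 \times \T p_2)\bigl(\T(\Graph f)\bigr) \quad\text{inside}\quad \T N_1 \times \T N_2,
\]
which I intend to verify by proving two inclusions.

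The inclusion ``$\supseteq$'' is immediate from curve representations: any $(Y_1, Y_2) \in \T(\Graph f)$ can be represented by a smooth curve $t \mapsto (y_1(t), y_2(t))$ lying in $\Graph f$; pushing this curve forward by $p_1 \times p_2$ gives a smooth curve in $\Graph r$, whose derivative at $t = 0$ equals $(\T p_1(Y_1), \T p_2(Y_2))$, so this pair lies in $\T(\Graph r)$, and hence is related by the relation defined by the lifted diagram.

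For the opposite inclusion, take $(X_1, X_2) \in \T_{(x_1, x_2)}(\Graph r)$ and represent it by a smooth curve $c(t) = (x_1(t), x_2(t))$ in $\Graph r$ with $c(0) = (x_1, x_2)$. The plan is to lift $c$ to a smooth curve in $\Graph f$: pick any preimage $(y_1, y_2) \in (p_1 \times p_2)^{-1}(x_1, x_2) \cap \Graph f$ and construct a smooth curve $t \mapsto (y_1(t), y_2(t))$ in $\Graph f$ starting at $(y_1, y_2)$ with $p_i(y_i(t)) = x_i(t)$ for $i = 1,2$. Differentiating at $t = 0$ then produces the desired $(Y_1, Y_2) \in \T(\Graph f)$ mapped by $\T p_1 \times \T p_2$ to $(X_1, X_2)$. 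The main obstacle is precisely this smooth lifting: it requires that the smooth surjection $(p_1 \times p_2)\colon \Graph f \twoheadrightarrow \Graph r$ admit smooth local sections, which in the Zakrzewski-style setting of this paper is part of the regularity implicit in the phrase ``smooth relation'' (it holds, for example, whenever the restriction $(p_1 \times p_2)|_{\Graph f}$ is a submersion onto $\Graph r$, the situation that arises in every application of this proposition in the main text, most notably in verifying the equivalence of the two descriptions of the higher canonical flips $\kappa_k$ and of the reduced relations on $\A^k(\G)$).
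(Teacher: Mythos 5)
Your argument is, in substance, the same as the paper's: both inclusions are proved by choosing curve representatives, the easy direction exactly as in the published proof, and the converse by lifting a curve in $\Graph r$ to a curve in $\Graph f$. The step you single out as the main obstacle is indeed the crux, but your way of discharging it is not available: in this paper a \emph{smooth relation} is, by definition, just a relation whose graph is a submanifold, and this carries no local-section or lifting property for the restricted projection $(p_1\times p_2)|_{\Graph f}\colon\Graph f\to\Graph r$. With only the stated hypotheses the converse inclusion can actually fail. Take $M_1=M_2=N_1=N_2=\R$, $f=\id_\R$ (so $\Graph f$ is the diagonal) and $p_1=p_2\colon y\mapsto y^3$. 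Then $r=\id_\R$ is a smooth relation defined by the diagram, and $\T r=\id_{\T\R}$; but a pair $(X_1,X_2)$ related by the lifted diagram must satisfy $X_1=X_2=\T p_1(Y)$ for some $Y\in\T\R$, and since $\T p_1$ annihilates $\T_0\R$, the only such pair over the base point $0$ is the pair of zero vectors. Hence $((0,v),(0,v))$ with $v\neq 0$ is $\T r$-related but is not the projection of any $\T f$-related pair, so the conclusion of the proposition fails there.

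Consequently your proof genuinely needs an additional hypothesis guaranteeing differentiable lifts of curves — for instance that $(p_1\times p_2)|_{\Graph f}$ is a surjective submersion onto $\Graph r$, which, as you observe, is the case in every application made of this proposition in the paper (the vertical arrows there are quotient maps by free and proper actions, or fiberwise isomorphisms over such). It is only fair to add that the published proof makes exactly the same silent assumption: it chooses, for each $t$, points $y_j(t)$ with $p_j(y_j(t))=x_j(t)$ and $(y_1(t),y_2(t))\in f$, and then writes $\jet 1_0 y_1$, $\jet 1_0 y_2$ without any argument that these pointwise lifts can be organized into differentiable curves. So your attempt does not fall short of the paper's own argument; it merely makes explicit the assumption the paper leaves implicit — but that assumption should be stated as an extra hypothesis (or verified in each application), not attributed to the definition of a smooth relation.
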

\begin{proof} Let us assume first that $X_1\in \tgT N_1$, $X_2\in \tgT N_2$ are projections of some elements
$Y_1\in \tgT M_1$, $Y_2\in \tgT M_2$ such that $(Y_1, Y_2)\in\T f$. We may assume that $Y_j$, $j=1,2$, are represented by
curves $y_j(t)\in M_j$ such that $(y_1(t), y_2(t))\in f$. Then $X_1$, $X_2$ are represented by curves
$p_1(x_1(t))$, $p_2(x_2(t))$, respectively,  which are $r$-related, and so
$(X_1, X_2)\in Tr$.

Conversely, given $\T r$-related elements $X_1\in \tgT N_1$, $X_2\in \tgT N_2$ we may assume that they are represented
by some curves $x_j:\R\to N_j$ such that $(x_1(t), x_2(t))\in r$ for any $t\in\R$.
Due to our hypothesis on $r$, we can find $y_j(t)\in M_j$, $j=1,2$, such that $p_j(y_j(t))= x_j(t)$ and $(y_1(t), y_2(t))\in f$. Therefore,
$X_1$, $X_2$ are projections of $\T f$-related elements $\jet 1_0 y_1$, $\jet 1_0 y_2$ as it was claimed.
\end{proof}


\newpage
\section*{Acknowledgments}
This research was supported by the  Polish National Science Center grant
under the contract number DEC-2012/06/A/ST1/00256.

The authors are grateful to professors Pawe\l\ Urba\'{n}ski and Janusz Grabowski for reference suggestions. We wish to thank especially the second of them for reading the manuscript and giving helpful remarks.


\end{document}